\definecolor{darkgreen}{rgb}{0,0.45,0}
\theoremstyle{definition}
\newtheorem{lem}[theorem]{Lemma}
\newtheorem{cor}[theorem]{Corollary}
\newtheorem{rem}[theorem]{Remark}
\newtheorem{dfn}[theorem]{Definition}
\newtheorem{exm}[theorem]{Example}
\crefname{thm}{Theorem}{Theorems}
\crefname{lem}{Lemma}{Lemmas}
\crefname{cor}{Corollary}{Corollaries}
\newtheorem*{proposition*}{Proposition}
\theoremstyle{definition}
\newtheorem{notation}[theorem]{Notation}
\def\noteson{\gdef\luis##1{\noindent{\color{blue}[Luis: ##1]}}
\gdef\uli##1{\noindent{\color{violet}[Uli: ##1]}}
\gdef\todo##1{\noindent{\color{red}[todo: ##1]}}}
\newcommand{\define}[1]{\emph{#1}}
\renewcommand{\to}{\xrightarrow{\;\;\;}}
\renewcommand{\mod}{\mathrm{mod}\xspace}
\renewcommand{\epsilon}{\varepsilon}
\renewcommand{\phi}{\varphi}
\DeclareMathOperator{\End}{End}
\DeclareMathOperator{\colim}{colim}
\newcommand{\rk}{\mathsf{rk}}
\newcommand{\vect}{\mathbf{vec}}
\newcommand{\stkout}[1]{\ifmmode\text{\sout{\ensuremath{#1}}}\else\sout{#1}\fi}
\DeclareMathAlphabet{\mathpzc}{OT1}{pzc}{m}{it}
\newcommand\DEFINEALPHABETLOOP[3]{%
  \ifx\relax#3\expandafter\@gobble\else\expandafter\@firstofone\fi
  {\expandafter\newcommand\expandafter*\csname#3#1\endcsname{#2{#3}}%
   \DEFINEALPHABETLOOP{#1}{#2}}%
}%
\newcommand\Definealphabet[2]{%
  \DEFINEALPHABETLOOP{#1}{#2}abcdefghijklmnopqrstuvwxyzABCDEFGHIJKLMNOPQRSTUVWXYZ\relax
}%
\begin{document}

% Enter full title and short title for running headers
\title{Multi-Parameter Persistence Modules are Generically Indecomposable}

\shorttitle{Multi-Parameter Persistence Modules are Generically Indecomposable}

% Enter the publication year and the ID number of the paper
\volumeyear{-}
\paperID{-}

% Author name(s)
\author{Ulrich Bauer\affil{1} and Luis Scoccola\affil{2}}
% Abbreviated author name for running headers
\abbrevauthor{U. Bauer and L. Scoccola}
% Abbreviated author name for first page header
\headabbrevauthor{Bauer, U., and L. Scoccola}

%\author{}
%\address{}
%\author{}
%\address{}

\address{%
\affilnum{1}Department of Mathematics, TUM School of Computation, Information and Technology, Technical University of Munich, Germany (\url{ulrich.bauer@tum.de})
and
\affilnum{2}Mathematical Institute, University of Oxford, United Kingdom (\url{luis.scoccola@maths.ox.ac.uk})}

% Address / e-mail address of corresponding author
\correspdetails{luis.scoccola@maths.ox.ac.uk}

% Received/revised/accepted dates will be entered by the publisher during production of an accepted paper. Please do not edit these placeholders for submission.
\received{-}
\revised{-}
\accepted{-}

% Enter details of editor communicating this article
\communicated{-}

%TODO mandatory: add short abstract of the document
\begin{abstract}%
Algebraic persistence studies persistence modules (typically, linear representations of the poset $\mathbf{R}^n$ with $n \geq 1$) and
the algebraic relationships between persistence modules that are interleaved.
The notion of $\epsilon$-interleaving between persistence modules is a generalization of the notion of isomorphism (recovering isomorphism when $\epsilon = 0$), which can be used to quantify how far any two persistence modules are from being isomorphic.
An emblematic example of this kind of study is the algebraic stability theorem, which strengthens the Krull--Schmidt property of one-parameter persistence modules (representations of $\Rbf$) by generalizing isomorphism to interleaving:
If a pair of one-parameter persistence modules is $\epsilon$-interleaved, then there exists a partial matching between the indecomposable summands of the two modules such that matched indecomposables are $\epsilon$-interleaved and unmatched indecomposables are $\epsilon$-interleaved with the zero module.
Our first main result implies that the obvious extension of the algebraic stability theorem to the case of multi-parameter persistence modules (representations of $\Rbf^n$ with $n \geq 2$) fails spectacularly:
Any finitely presentable multi-parameter persistence module can be approximated arbitrarily well by an indecomposable module.
Our second main result states that modules that are sufficiently close to an indecomposable decompose as a direct sum of an indecomposable and a nearly trivial module.
We derive from these two results several consequences about the interplay between the algebraic and the topological properties of multi-parameter persistence modules.
These results provide strong motivation for approaching multi-parameter persistence in a way that does not rely on directly decomposing modules by indecomposables.
\end{abstract}

%%% do not delete: optional second paragraph for the abstract
    %These two results have several consequences of interest.
    %(\ref{theorem:main-theorem})~Generically, multi-parameter persistence modules are nearly indecomposable, and 
    %%For every $\epsilon > 0$, the modules that decompose as a direct sum of an indecomposable and a strictly $\epsilon$-trivial module form a dense and open set.
    %(\ref{corollary:structural-stability})~the indecomposable modules are the only modules with the property that sufficiently nearby modules decompose in a similar way.
    %We also consider the problem of approximating multi-parameter persistence modules by modules whose indecomposable summands belong to some prescribed family of indecomposables $\Fcal$,
    %and show that, if the $\Fcal$-decomposable modules are dense, then (\ref{theorem:dense-F-decomposables})~$\Fcal$ is itself dense
    %and (\ref{theorem:dense-F-complicated})~$\Fcal$ must contain modules of arbitrary high pointwise dimension.
    %Our results provide further motivation for many proposed approaches to multi-parameter persistence that do not rely on directly decomposing modules by indecomposables.

\maketitle

\section{Introduction}
\label{section:introduction}

The \emph{decomposition theorem} for one-parameter persistence modules states that any finitely presentable linear representation of a totally ordered set decomposes uniquely as a direct sum of \emph{interval modules} \cite{gabriel,webb,zomorodian-carlsson,crawley-boevey}.
Interval modules owe their name to the fact that they are completely characterized by their support, which is necessarily an interval of the totally ordered set.
This theorem is of fundamental importance in topological data analysis and inference, where indecomposables with large support (long intervals) are interpreted as prominent topological features of data, corresponding to large-scale features of the process that generated the data, while indecomposables with small support (short intervals) are understood as features caused by noise, randomness, or local geometry of the underlying process \cite{
edelsbrunner-letscher-zomorodian,
edelsbrunner-harer,
chazal-guibas-oudot-skraba,
hiraoka-shirai-trinh,
bubenik-hull-patel-whittle,
bobrowski-skraba,
}.
The consistency of this methodology is guaranteed by various stability theorems.
%\cite{cohensteiner-edelsbrunner-harer,
%chazal-cohen-steiner-glisse-guibas-oudot,
%damico-frosini-landi,
%cohensteiner-edelsbrunner-harer-mileyko,
%chazal-silva-glisse-oudot,
%bauer-lesnick,
%skraba-turner}.
In particular, the \emph{algebraic stability theorem} \cite{
chazal-cohen-steiner-glisse-guibas-oudot,
chazal-silva-glisse-oudot,
bauer-lesnick} states that the decomposition of one-parameter persistence modules into their indecomposable summands is a $1$-Lipschitz operation, when persistence modules are compared using the \emph{interleaving distance} \cite{chazal-cohen-steiner-glisse-guibas-oudot,lesnick}, an algebraically-defined distance satisfying a desirable universal property.
%, and collections of indecomposable one-parameter persistence modules (intervals) are compared using the \emph{bottleneck distance}.%, an optimal transport distance~\cite{divol-lacombe}.
While the algebraic stability theorem was originally motivated by scientific applications, it has recently found applications in areas of pure mathematics such as analysis and symplectic geometry \cite{MR4249570,MR4413744,buhovsky2022coarse}.

Multi-parameter persistence modules
%-- that is, linear representations of iterated Cartesian products of the poset of real numbers --
arise naturally %in applications of topological data analysis
as the homology of topological spaces filtered by more than one real parameter
%.
%This includes
%%morphisms between one-parameter persistence modules,
%datasets where both metric and density structure is relevant, and otherwise parametrized data 
%
\cite{
    carlsson-zomorodian,
    biasotti-cerri-frosini-giorgi-landi,
    lesnick-wright,
    escolar-hiraoka,
    mcinnes-healy,
    vipond,
    kim-memoli-2,
    botnan-lesnick-2,
    }.
When moving from one to multiple parameters, the approach to understanding an arbitrary persistence module by completely characterizing its indecomposable summands is thought to be infeasible as it involves classifying all indecomposable representations of all finite dimensional algebras, in the following sense.

\newcommand{\statementmultiparameterwild}{%
    Let $\Pscr$ be a totally ordered set with at least $4$ elements, and let $n \geq 2$.
    Let~$\kbb$ be any field and let $\Lambda$ be a finite dimensional $\kbb$-algebra.
    Given a set of generators of~$\Lambda$, there is an explicit injective function 
    %$F$
    mapping isomorphism classes of indecomposable, finite dimensional $\Lambda$-modules to isomorphism classes of indecomposable, finitely presentable functors $\Pscr^n \to \vect_\kbb$. 
}

\begin{proposition*}
    %\label{proposition:multiparameter-wild}
    \statementmultiparameterwild
\end{proposition*}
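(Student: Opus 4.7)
The plan is to reduce the problem to exhibiting a fully faithful embedding on representations of a finite sub-poset of $\Pscr^n$, and then invoke a classical strict-wildness construction for commutative grids. First I would fix a chain $p_0 < p_1 < p_2 < p_3$ in $\Pscr$ and let $S \subset \Pscr^n$ denote the $4 \times 4$ sub-grid whose first two coordinates range over this chain and whose remaining coordinates are fixed to $p_0$; the inclusion $\iota \colon S \hookrightarrow \Pscr^n$ is a fully faithful map of posets. Left Kan extension $\Lan_\iota$ along $\iota$ is then fully faithful (restriction recovers the original representation, as the relevant colimits are over principal down-sets), so it preserves endomorphism algebras and hence preserves indecomposability and reflects isomorphism; moreover, since all colimits involved are over finite down-sets of~$S$, it sends finite-dimensional representations of~$S$ to finitely presentable functors $\Pscr^n \to \vect_\kbb$. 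It therefore suffices to construct, for each finite-dimensional $\kbb$-algebra $\Lambda$ equipped with a chosen generating set, a fully faithful $\kbb$-linear functor $\Phi$ from finite-dimensional $\Lambda$-modules to representations of the $4 \times 4$ grid~$S$.

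For $\Phi$, I would use a classical representation-embedding construction of the kind that witnesses strict wildness of representations of sufficiently large commutative grids (going back to work of Brenner, Nazarova, and others). Given generators $g_1, \ldots, g_m$ of $\Lambda$ acting on a module~$M$ as $\phi_1, \ldots, \phi_m \colon M \to M$, the construction assigns to~$M$ a commutative $4 \times 4$ diagram whose vertex values are direct sums of copies of~$M$ and whose structural maps are block matrices built from $\mathrm{id}_M$, the $\phi_i$, and zero blocks. The identity blocks are placed along specific edges so that the commutativity of the grid acts as a rigidity constraint, forcing any natural transformation between two such diagrams to have block-diagonal form with a single underlying map $f \colon M \to M'$ appearing at every vertex; commutativity with the remaining blocks then forces $f$ to intertwine each $\phi_i$ with the corresponding $\phi'_i$, i.e.\ to be a $\Lambda$-homomorphism. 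The resulting $\kbb$-algebra isomorphism $\End_\Lambda(M) \cong \End(\Phi M)$ delivers fullness, faithfulness, and preservation of indecomposability all at once.

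The hard part is that the number $m$ of generators is unbounded while the grid $S$ has a fixed number of vertices: the encoding must pack an arbitrary number of endomorphisms~$\phi_i$ into $S$ by enlarging the vertex values to sufficiently large direct sums of copies of~$M$, rather than by adding new grid vertices. The classical construction accomplishes this by exploiting the two orthogonal grid directions to separately carry the \emph{domain-versus-codomain} and the \emph{generator-index} data of the $\Lambda$-action, arranging the block matrices so that every $\phi_i$ appears as a distinct entry of a composite along a specific path through~$S$. Once this encoding is laid out and the endomorphism-algebra isomorphism is verified by a direct block-matrix diagram chase, the composition $\Lan_\iota \circ \Phi$ provides the explicit injection from isomorphism classes of indecomposable finite-dimensional $\Lambda$-modules to isomorphism classes of indecomposable finitely presentable functors $\Pscr^n \to \vect_\kbb$ asserted in the proposition.
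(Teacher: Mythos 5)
Your overall plan is the same as the paper's: embed $\Lambda$-modules into representations of a small finite full subposet of $\Pscr^n$, then left Kan extend along the inclusion, using that $\Lan_\iota$ is fully faithful, preserves indecomposability (it preserves endomorphism algebras), and sends finite-dimensional representations of a finite poset to finitely presentable functors. That reduction is correct and matches the paper. The paper, however, uses a six-element ``spider''-shaped subposet $\Qscr$ of the $4 \times 3$ grid and invokes specific documented strict-wildness results for $\Qscr$ (Simson's book and Nazarova), whereas you work with the full $4 \times 4$ commutative grid.

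The genuine gap is in the step you flag yourself as ``the hard part.'' You assert, but do not construct or precisely cite, a fully faithful $\kbb$-linear functor $\Phi \colon \modcat(\Lambda) \to \vect^S$ for $S$ the $4 \times 4$ grid, one that works uniformly in the number $m$ of chosen generators of $\Lambda$. The block-matrix sketch you give (identity blocks forcing rigidity, each $\phi_i$ appearing as a distinct entry of a composite along some path through $S$) is a plausible description of what such a construction must look like, but it is not verified: with a fixed number of vertices and arrows in $S$, it is not obvious how to arrange $m$ independent linear maps so that commutativity of the grid forces every natural transformation into block-diagonal form intertwining all $\phi_i$, and without writing out the blocks and doing the diagram chase the claim is unproven. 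The standard way to close this is either to cite a concrete strict-wildness theorem for commutative grids, or (cleaner, and closer to the paper) to first reduce to two generators by embedding $\modcat(\Lambda)$ fully faithfully into finite-dimensional $\kbb\langle x,y\rangle$-modules, and then embed the latter into representations of a small strictly wild poset that sits inside your $4 \times 4$ grid. Either way, a precise reference replacing ``a classical construction\dots'' is needed before the proof is complete.

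One smaller point: your justification that $\Lan_\iota$ followed by restriction is the identity should rest on the fullness of the inclusion $\iota$ (so that the comma category over $\iota(s)$ has $s$ as a terminal object), rather than on the colimits being over ``principal down-sets''; the latter phrasing is suggestive but not quite the right reason in general.
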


The proposition
%\cref{proposition:multiparameter-wild}
is standard and follows from well known results in representation theory; for completeness, we give a proof with references in \cref{appendix}.

%What this means in practice is that the existence of an explicit classification of multi-parameter persistence modules would imply the existence of a canonical form for ordered pairs of square matrices of the same size up to simultaneous change of basis, which is known as the [TODO] and has been an open problem since [TODO].
One can interpret the proposition
%\cref{proposition:multiparameter-wild}
as a no-go result stating that, contrary to the one-parameter case, no significant reduction in algebraic complexity is expected from decomposing arbitrary multi-parameter persistence modules into their indecomposable summands.
It is important to keep this in mind, since, in order to devise feasible approaches to multi-parameter persistence, it is necessary to understand the limits of the mathematical tools used to study multi-parameter persistence modules.
However, this no-go result is a purely algebraic one; meanwhile, key results in algebraic persistence -- such as stability theorems -- build on the interplay between the algebraic and the metric properties of persistence modules.
In particular,
the proposition
%\cref{proposition:multiparameter-wild}
says nothing about the behavior of the algebraic structure of multi-parameter persistence modules under perturbations in the interleaving distance.
This motivates various questions:
%This leaves open some questions about the interplay between the algebraic and metric properties of multi-parameter persistence modules, such as the following:
\begin{enumerate}[$(i)$]
    \item With respect to the topology induced by the interleaving distance, how do generic multi-parameter persistence modules decompose?
    \item Which multi-parameter persistence modules are structurally stable, meaning that they have the property that any sufficiently nearby module has a similar decomposition into indecomposables?
    %\item Does decomposition into indecomposables reduce the complexity of general module?
    %A bit more specifically, are indecomposable modules simpler than arbitrary modules?
    %\item If $\Fcal$ is a simple family of indecomposable multi-parameter persistence modules, how stable, in the interleaving distance, is the property of being $\Fcal$-decomposable?
    %For example, how stable is the descriptor that maps a module to the support of its indecomposable summands?
    \item Does there exist a simple family of indecomposable multi-parameter persistence modules $\Fcal$ such that any module can be approximated arbitrarily well, in the interleaving distance, by a direct sum of modules in~$\Fcal$?
\end{enumerate}
%consider computable descriptors
% it is natural to ask how much information about a multi-parameter persistence module is carried by descriptors
%derived from the decomposition into indecomposables \cite{dey-xin}.
%are: How much information is carried by this descriptor?
%How stable is this descriptor?
%How complicated are the indecomposable summands of multi-parameter persistence modules arising from specific applications?
%How different can the indecomposable summands of two similar multi-parameter persistence modules be?

%Nevertheless, since decompositions can be computed in practice \cite{dey-xin}, one can still ask the following natural questions:

This paper contains two main theorems
(\cref{theorem:indecomposables-dense} and \cref{proposition:stability-indecomposability})
and four corollaries of these theorems
(\cref{theorem:main-theorem,corollary:structural-stability,theorem:dense-F-decomposables,theorem:dense-F-complicated}), which we state in this introduction.
\cref{theorem:main-theorem,corollary:structural-stability} provide answers to questions $(i)$ and $(ii)$ respectively, while \cref{theorem:dense-F-decomposables,theorem:dense-F-complicated} provide answers to question $(iii)$.
The main conclusion that we draw from the results in this paper is that, contrary to the one-parameter case, the
operation of decomposing arbitrary multi-parameter persistence modules into their indecomposable summands is highly unstable.
%this descriptor is highly unstable.
%and thus carries very little information.
As such, our results provide further motivation for various proposed approaches to understanding the structure and stability of multi-parameter persistence modules that do not rely on directly decomposing modules by indecomposables
\cite{
biasotti-cerri-frosini-giorgi-landi,
carlsson-zomorodian,
cerri-difabio-ferri-frosini-landi,
scolamiero-chacholski-lundman-ramanujan-oberg,
miller,
kim-memoli,
botnan-oppermann-oudot,
blanchette-brustle-hanson,
lesnick-wright-2,
asashiba2023approximation,
mccleary2021edit,
botnan-oppermann-oudot-scoccola,
cacholski-guidolin-ren-scolamiero-tombari,
oudot-scoccola,
asashiba-escolar-nakashima-yoshiwaki,
bjerkevik2023stabilizing,
},
and give a sense of how complicated arbitrary indecomposables can be, complementing results such as the ones in \cite{carlsson-zomorodian,buchet-escolar,moore}.
%, and of the apparent infeasibility of the complete description of any dense subset of the space of all multi-parameter persistence modules.

%\uli{decompositions are crucial for our understanding of the phenomenon of persistent homology. most conceptual approach to birth death.
%for more parameters, more difficult. things always decompose, but not in a nice way. in particular, indecomposable summands are not necessarily thin. discuss drodz trichotomy. we are typically in a wild setting. worst case: persistent homology doesn't decompose further at all. discuss practical and theoretical angles on this problem. in practice we might still often see persistence modules that decompose nicely. but it is an interesting question how commonly the indecomposable modules are encountered as persistent homology.
%interleaving distance is the most commonly used method for expressing the similarity or dissimilarity of persistence modules. one natural question: given an arbitrary persistence module, are there indecomposable ones arbitrarily close? now build up to the questions addressed in this paper.}

%\todo{If we are submitting to a general audience journal (which I think we probably should), we need to motivate TDA a bit more, and probably give an informal description of the interleaving distance (possibly of the bottleneck distance), and of the barcode.}

\subparagraph{Contributions}
Let $n \geq 1 \in \Nbb$.
An $n$-parameter persistence module is a functor $\Rbf^n \to \vect$, where $\vect$ is the category of finite dimensional vector spaces over a fixed field.
Note that we only consider persistence modules valued in finite dimensional vector spaces, which are often referred to as pointwise finite dimensional persistence modules.
Recall that the \emph{interleaving distance}, denoted $d_I$, is an extended metric on the set of isomorphism classes of finitely presentable $n$-parameter persistence modules.
For details and more background, see \cref{section:background}.
We discuss some implications of our main results in the next subsection.
%\luis{I think it's worth having a sentence like the following, although it should be phrased better}
%We discuss our main results in the next subsection.

%\luis{Is it worth having the next sentence, given that it says essentially the same as the result?}
%Our first main result states that, in the space of finitely presentable multi-parameter persistence modules, the indecomposable modules form a dense set.

\begin{restatable}{theoremx}{indecomposabledense}
    \label{theorem:indecomposables-dense}
    Let $n \geq 2$.
    In the space of finitely presentable $n$-parameter persistence modules equipped with the interleaving distance, the indecomposable modules form a dense subset.
\end{restatable}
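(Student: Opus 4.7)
Fix a finitely presentable module $M$ and $\epsilon > 0$; the goal is to construct an indecomposable module $N$ with $d_I(M, N) \leq \epsilon$. The plan is to decompose $M$ into indecomposable summands and then glue these summands together using non-trivial morphisms that live at scale $\epsilon$, yielding a module that is indecomposable yet $\epsilon$-interleaved with $M$.

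First I would decompose $M \cong \bigoplus_{i=1}^{k} M_i$ into indecomposables, which is possible by Krull--Schmidt for pointwise finite dimensional persistence modules; the case $k=1$ is trivial. For $k \geq 2$ I would arrange the summands in a chain and, for each consecutive pair $(M_i, M_{i+1})$, choose a non-zero morphism $\phi_i \colon M_i \to M_{i+1}[\delta_i]$ with $\delta_i \in \mathbf{R}^n$ of $\ell^\infty$-norm at most $\epsilon$, where $M_{i+1}[\delta_i]$ denotes the shift. The availability of such $\phi_i$ at arbitrarily small scales is the crucial feature unlocked by $n \geq 2$: the extra parameter directions give enough room to translate one summand relative to another into a position where a non-trivial morphism exists, something that cannot happen at arbitrarily small scales in a single parameter. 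I would then build $N$ as the iterated non-split extension along $\phi_1, \ldots, \phi_{k-1}$; concretely, at each grade $t$, $N_t$ is the direct sum of the $(M_i)_t$, with transition maps carrying off-diagonal blocks determined by the $\phi_i$.

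Two properties then need verification. The bound $d_I(M, N) \leq \epsilon$ follows by writing down explicit $\epsilon$-interleaving morphisms $M \to N[\epsilon]$ and $N \to M[\epsilon]$: after a shift by $\epsilon$, the off-diagonal contributions from the $\phi_i$ get absorbed into the ambient transition maps, so the interleaving can be defined block by block using the decomposition $M \cong \bigoplus_i M_i$. Indecomposability of $N$, which is the main technical point, would be proved by showing $\End(N)$ is local: any endomorphism of $N$ must commute with each linking morphism $\phi_i$, and since the $\phi_i$ form a non-split chain connecting summands whose endomorphism algebras are already local, this forces every idempotent in $\End(N)$ to equal $0$ or the identity.

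The main obstacle is balancing the two demands on the $\phi_i$: they must be large enough to prevent $N$ from splitting and small enough to remain invisible at scale $\epsilon$. Negotiating this trade-off is precisely where the hypothesis $n \geq 2$ is essential; the classical stability theorem rules out such a construction in one parameter. A secondary subtlety is that a particular choice of $\phi_i$ may inadvertently produce a new splitting of $N$ along an unforeseen direction, in which case one would need either to perturb the $\phi_i$ into a sufficiently generic position, or to iterate the gluing procedure on any residual decomposable part of $N$, while tracking the accumulated interleaving cost to keep it within $\epsilon$.
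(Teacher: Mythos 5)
The proposal captures the correct high-level intuition -- glue the indecomposable summands together so cheaply (in the interleaving sense) that the sum cannot split -- and this is indeed the shape of the paper's argument, which iteratively ``tacks'' summands together using the Tacking Lemma. But the proposal has a genuine gap at the first step that the paper works hard to avoid.

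You assert that for any two indecomposable summands $M_i, M_{i+1}$ and any $\epsilon>0$, one can find a shift $\delta_i$ of $\ell^\infty$-norm at most $\epsilon$ and a non-zero morphism $\phi_i\colon M_i\to M_{i+1}[\delta_i]$, claiming that the extra parameter directions always ``give enough room.'' This is false. Take $M_i = \kbb_{[(0,0),(1,1))}$ and $M_{i+1}=\kbb_{[(10,10),(11,11))}$: for any shift $\delta$ with small $\ell^\infty$-norm, the supports of $M_i$ and $M_{i+1}[\delta]$ remain disjoint, so $\Hom(M_i,M_{i+1}[\delta])=0$. The same obstruction arises between any two summands whose supports are far apart. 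The paper's key technical idea is precisely that one should \emph{not} try to find morphisms between the given indecomposables; instead one locally modifies each summand in an $\epsilon$-trivial region (adding a ``thin corner,'' then an ``antenna attachment'' given by the gadget module $\Gsf$ of Definition~\ref{def:G}, then moving the antenna to a shared location) so as to \emph{create} a gluing interface where none existed. These modifications cost little in interleaving distance by Lemma~\ref{lemma:local-changes}, but they fundamentally change the homomorphism situation between the pieces. Without some device of this kind, your construction cannot get off the ground.

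A secondary, also serious, issue is your argument for indecomposability of $N$. Even when a non-zero $\phi_i$ exists, a non-zero morphism does not yield a non-split extension, and a non-split extension of two indecomposables need not itself be indecomposable; the assertion that ``the $\phi_i$ form a non-split chain \ldots\ so every idempotent in $\End(N)$ is $0$ or the identity'' does not follow without substantial additional control. The paper handles this by making the gluing happen along a rectangular boundary and then appealing to Lemma~\ref{lemma:gluing-indecomposable} (indecomposability of a module glued from an indecomposable piece and a piece all of whose summands touch the gluing boundary), together with the explicit verification in Lemma~\ref{lemma:G-is-indecomposable} that $\End(\Gsf)\cong\kbb$. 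Your fallback of ``iterating the procedure on any residual decomposable part'' has no termination guarantee and no control on the accumulated interleaving cost. So while the strategic outline matches the paper, the two load-bearing steps -- existence of the gluing morphisms and indecomposability of the result -- are both unproven, and the first one is not merely unproven but cannot be established in the form you state it.
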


Our second main result relates the property of being close to an indecomposable to the property of decomposing as a direct sum of an indecomposable and a nearly trivial persistence module.
In order to state it, we need the following two definitions.
Let $\epsilon > 0$.
A multi-parameter persistence module $M : \Rbf^n \to \vect$ is \define{strictly $\epsilon$-trivial} if there exists $\epsilon' < \epsilon$ such that the structure morphism $M(r) \to M(r+\epsilon')$ is zero for every $r \in \Rbf^n$.
A module is \define{$\epsilon$-indecomposable} if it decomposes as a direct sum of an indecomposable and a strictly $\epsilon$-trivial module.

\begin{restatable}{theoremx}{stabilityindecomposability}
    \label{proposition:stability-indecomposability}
    Let $n \geq 1$ and $\epsilon > 0$.
    In the space of finitely presentable $n$-parameter persistence modules equipped with the interleaving distance, 
    the $\epsilon$-indecomposable modules form an open set.
\end{restatable}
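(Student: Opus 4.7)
Fix a decomposition $M = N \oplus T$ witnessing $\epsilon$-indecomposability, with $N$ indecomposable and $T$ strictly $\epsilon'$-trivial for some $\epsilon' < \epsilon$. Choose $\delta > 0$ small enough that $\eta := 2\delta + \epsilon' < \epsilon$, let $M'$ be any finitely presentable module with $d_I(M,M') < \delta$, and fix a $\delta$-interleaving $\phi \colon M \to M'[\delta]$, $\psi \colon M' \to M[\delta]$. The goal is to produce a decomposition $M' = N' \oplus T'$ with $N'$ indecomposable and $T'$ strictly $\epsilon$-trivial.

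\textbf{Step 1 (factorization through $N$).} Since $\sigma_T^{\epsilon'} = 0$, the structure map of $M$ factors as $\sigma_M^{\epsilon'} = \iota_N[\epsilon']\,\sigma_N^{\epsilon'}\,\pi_N$. Combining this with the interleaving identity $\psi[\delta]\phi = \sigma_M^{2\delta}$ and naturality of structure maps, the $\eta$-shift of $M'$ factors as
\[
\sigma_{M'}^{\eta} \;=\; \bar{\alpha}[\delta+\epsilon'] \circ \sigma_N^{\epsilon'}[\delta] \circ \bar{\beta},
\]
where $\bar{\alpha} := \phi\,\iota_N \colon N \to M'[\delta]$ and $\bar{\beta} := \pi_N[\delta]\,\psi \colon M' \to N[\delta]$. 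Thus the $\eta$-shift of $M'$ factors through the indecomposable module $N$.

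\textbf{Step 2 (Krull--Schmidt reduction).} Apply Krull--Schmidt to decompose $M' = \bigoplus_k I_k$ into indecomposables. The module $M'$ is $\epsilon$-indecomposable precisely when at most one summand $I_k$ fails to be strictly $\epsilon$-trivial; since $\eta < \epsilon$, any $I_k$ with $\sigma_{I_k}^\eta = 0$ is strictly $\epsilon$-trivial (with witness $\eta$). It therefore suffices to show that at most one index $k$ has $\sigma_{I_k}^\eta \neq 0$.

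\textbf{Step 3 (main step).} Setting $\alpha_k := \pi_{I_k}[\delta]\,\bar{\alpha} \colon N \to I_k[\delta]$ and $\beta_k := \bar{\beta}\,\iota_{I_k} \colon I_k \to N[\delta]$, projection of the Step~1 identity onto $I_k$-components yields the orthogonality relations $\alpha_j[\delta+\epsilon']\sigma_N^{\epsilon'}[\delta]\beta_k = \delta_{jk}\,\sigma_{I_k}^{\eta}$, and the interleaving identity $\bar{\beta}[\delta]\bar{\alpha} = \sigma_N^{2\delta}$ gives the completeness identity $\sum_k \beta_k[\delta]\alpha_k = \sigma_N^{2\delta}$. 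If two distinct indices $k_1, k_2$ had $\sigma_{I_{k_j}}^\eta \neq 0$, the pairs $(\alpha_{k_j}, \beta_{k_j})$ would realize the structure map of $I_{k_1} \oplus I_{k_2}$ as a diagonal factorization through the indecomposable $N$. A Krull--Schmidt / Fitting-style argument applied to the graded endomorphism algebra $\bigoplus_c \Hom(N, N[c])$, using locality of $\End(N)$, should then yield a contradiction.

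\textbf{Main obstacle.} The crux is Step~3. Pointwise rank bounds on $\sigma_{M'}^\eta$ factoring through $N$ are not enough: in the multi-parameter setting the supports of $I_{k_1}$ and $I_{k_2}$ may lie in disjoint regions of $\Rbf^n$ while $N$ accommodates both globally, so a naive counting argument fails. The essential work is to convert the combinatorics of the $(\alpha_k,\beta_k)$-matrix and its orthogonality and completeness identities into a genuine splitting obstruction for the indecomposable $N$, exploiting the full graded endomorphism structure of $N$ rather than only its pointwise dimensions.
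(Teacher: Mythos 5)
Your Steps 1 and 2 set up exactly the right framework, and your identification of Step 3 as the crux is correct: the proposal as written has a genuine gap there, and the graded-endomorphism/Fitting argument you gesture at does not go through in the generality you invoke. The obstruction is real: the completeness identity $\sum_k \beta_k[\delta]\alpha_k = \sigma_N^{2\delta}$ produces elements of $\Hom(N, N[2\delta])$, not of $\End(N)$, and there is no analogue of primitivity of $1 \in \End(N)$ for the nonunital structure map $\sigma_N^{2\delta}$. Indeed, Theorem~A of the paper is a warning sign here: indecomposables are dense, so being indecomposable imposes no rigidity on the ``graded'' module $\bigoplus_c \Hom(N,N[c])$ that would rule out approximate splittings at scale $2\delta$.

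There is a second, more concrete problem that precedes Step 3: your $\delta$ is chosen only to satisfy $2\delta + \epsilon' < \epsilon$, i.e.\ it depends only on $\epsilon$ and $\epsilon'$. The paper's proof instead chooses $\delta$ small relative to the \emph{finite presentation} of $M$: it fixes a countable grid $\Pscr$ (refining a finite grid over which $M$ is an extension) with mesh widths in $[\alpha,\beta]$ for suitable $\alpha>0$ and $\beta < \min(\epsilon-\epsilon',\epsilon')$, and then takes $\delta < \alpha/2$. This extra smallness is what makes everything work: after applying the restriction-extension functor $(-)_\Pscr$, the map $(\eta^M_{2\delta})_\Pscr$ becomes an \emph{isomorphism} (\cref{lemma:structure-map-iso}), so the interleaving upgrades to a genuine split monomorphism $M_\Pscr \rightarrowtail N[\delta]_\Pscr$. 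At that point Krull--Remak--Schmidt--Azumaya on $N[\delta]_\Pscr \cong M' \oplus C \oplus X$ (with $M'$ indecomposable) isolates a single indecomposable summand $A$ of $N$ whose restriction-extension contains $M'$, and the remaining work is to show the complement is strictly $\epsilon$-trivial via \cref{lemma:factor-bounded-grid,lemma:snapping-trivial}. In short, the idea that resolves your Step 3 is the grid discretization: it converts the \emph{approximate} factorization you constructed into an \emph{exact} direct-sum splitting, which is what lets $\End$-locality and Krull--Schmidt actually bite. Without letting $\delta$ depend on the presentation of $M$, no such upgrade is available, and your orthogonality/completeness identities remain approximate rather than exact.
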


%Informally, the first corollary says that, generically, multi-parameter persistence modules decompose as an indecomposable direct sum a nearly trivial module.

\begin{restatable}{cor}{maintheorem}
    \label{theorem:main-theorem}
    Let $n \geq 2$ and $\epsilon > 0$.
    In the space of finitely presentable $n$-parameter persistence modules equipped with the interleaving distance, 
    the $\epsilon$-indecomposable modules form a dense and open set.
\end{restatable}

The second corollary concerns the notion of a structurally stable persistence module, which we now define.
Informally, a module is structurally stable if its decomposition into indecomposables is stable under perturbations in the interleaving distance.

We first define the notion of matching between (the full decompositions of) two persistence modules.
Let $\epsilon \geq 0 \in \Rbb$, and let $M, N : \Rbf^n \to \vect$.
An \emph{$\epsilon$-matching} between $M$ and $N$ consists of two decompositions $M \cong \bigoplus_{j \in J} M_j$ and $N \cong \bigoplus_{j \in J} N_j$, indexed by the same set~$J$, where each summand is either indecomposable or zero,
and such that $M_j$ and $N_j$ are $\epsilon$-interleaved for every $j \in J$.
%subject to the following conditions:
%\begin{itemize}
%    \item if $j \in J'$, then $M_j$ and $N_{f(j)}$ are $\epsilon$-interleaved;
%    \item if $j \in J \setminus J'$, then $M_j$ is $\epsilon$-interleaved with the zero module;
%    \item if $k \in K \setminus K'$, then $N_k$ is $\epsilon$-interleaved with the zero module.
%\end{itemize}
We then define the \emph{bottleneck distance} between $M$ and $N$ as
\[
    d_B(M,N) = \inf \{\epsilon \geq 0 : \text{there exists an $\epsilon$-matching between $M$ and $N$}\},
\]
and say that a finitely presentable module $M : \Rbf^n \to \vect$ is \emph{structurally stable} if, for every $\epsilon > 0$, there exists $\delta > 0$ such that every finitely presentable $N : \Rbf^n \to \vect$ with $d_I(M,N) < \delta$ satisfies $d_B(M,N) < \epsilon$.

In other words,
$M$ is structurally stable if any neighborhood of it in the bottleneck topology is also a neighborhood in the interleaving topology.
%the neighborhood system of $M$ with respect to the ?? topology is finer than that with respect to the ?? topology.
%\uli{in other words, the  change of topology is continuous at $M$, or equivalently, the interleaving topology is finer than the bottleneck topology}
The converse (that neighborhoods in the interleaving topology are also neighborhoods in the bottleneck topology) is always true: one has $d_I(M,N) \leq d_B(M,N)$ since $\epsilon$-interleavings between the summands of decompositions of $M$ and $N$ can be summed together to get an $\epsilon$-interleaving between $M$ and~$N$.

As a motivating example, we recall that the isometry theorem 
\cite{
    lesnick,
    bubenik-scott,
    bauer-lesnick,
    chazal-silva-glisse-oudot,
}
states that, for one-parameter persistence modules,
two modules are $\epsilon$-interleaved if and only if their decompositions into indecomposables are $\epsilon$-matched, implying that $d_I = d_B$.
This means that every finitely presentable one-parameter persistence module is structurally stable in a very strong sense.
In particular, for $\epsilon = 0$ this theorem recovers the Krull--Schmidt property of one-parameter persistence modules, asserting the essential uniqueness of the decomposition into indecomposables.

If every finitely presentable multi-parameter persistence module were structurally stable, then $d_I$ and~$d_B$ would induce the same topology.
This is far from true.

\begin{restatable}{cor}{structuralstability}
    \label{corollary:structural-stability}
    Let $n \geq 2$.
    A finitely presentable $n$-parameter persistence module is structurally stable if and only if it is indecomposable or zero.
\end{restatable}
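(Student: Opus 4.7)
The plan is to prove the two directions separately: $(\Leftarrow)$ rests on \cref{proposition:stability-indecomposability}, and $(\Rightarrow)$ rests on \cref{theorem:indecomposables-dense}.

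For $(\Leftarrow)$, the case $M = 0$ is immediate: we have $d_B(0, N) = d_I(0, N)$ via the matching that pairs every summand of $N$ with a zero summand, using that $d_I\bigl(0, \bigoplus_i N_i\bigr) = \max_i d_I(0, N_i)$. For $M$ nonzero indecomposable, fix $\epsilon > 0$ and observe that $M \cong M \oplus 0$ realizes $M$ as $\epsilon'$-indecomposable for every $\epsilon' > 0$. Applying \cref{proposition:stability-indecomposability} at parameter $\epsilon/3$, there exists $\delta > 0$, which we may take smaller than $\epsilon/3$, such that every $N$ with $d_I(M, N) < \delta$ decomposes as $N \cong N' \oplus N''$ with $N'$ indecomposable and $N''$ strictly $(\epsilon/3)$-trivial (hence with $d_I(0, N'') < \epsilon/3$). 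Additivity of interleavings under direct sums then gives $d_I(N, N') \leq d_I(0, N'') < \epsilon/3$, and the triangle inequality yields $d_I(M, N') < 2\epsilon/3 < \epsilon$. Thus $M \leftrightarrow N'$, $0 \leftrightarrow N''$ is an $\epsilon$-matching and $d_B(M, N) < \epsilon$.

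For $(\Rightarrow)$, we argue by contrapositive. Suppose $M$ is neither zero nor indecomposable. By Krull--Remak--Schmidt for finitely presentable pointwise finite-dimensional modules, $M \cong \bigoplus_{i=1}^k M_i$ uniquely up to permutation, with each $M_i$ nonzero indecomposable and $k \geq 2$. Each such $M_i$ satisfies $d_I(0, M_i) > 0$, so $\epsilon_0 := \tfrac{1}{2}\min_i d_I(0, M_i) > 0$. By \cref{theorem:indecomposables-dense}, for every $\delta > 0$ there exists an indecomposable $I$ with $d_I(M, I) < \delta$. In any matching between $M$ and $I$, uniqueness of the KRS decomposition forces the nonzero summands on the $M$-side to be exactly $M_1, \dots, M_k$, while the $I$-side has a unique nonzero summand, namely $I$. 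Consequently, at most one $M_i$ is paired with $I$, and the remaining $k - 1 \geq 1$ nonzero $M_i$'s are each paired with a zero summand on the $I$-side. Any such $\epsilon$-matching therefore forces $\epsilon \geq d_I(0, M_j) \geq 2\epsilon_0$ for some index $j$, so $d_B(M, I) \geq 2\epsilon_0$. Hence for $\epsilon = \epsilon_0$, no $\delta > 0$ can witness structural stability of $M$.

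The main subsidiary claim requiring care is that $d_I(0, N) > 0$ for every nonzero finitely presentable pointwise finite-dimensional module $N$. This is standard in the one-parameter case via the interval decomposition; in the multi-parameter case, it follows from a minimal finite free presentation, since a nonzero generator of $N$ at some position $r_0$ cannot be killed by relations at all arbitrarily small shifts $r_0 + \epsilon$, forcing some structure map $N(r_0) \to N(r_0 + \epsilon)$ to be nonzero for all sufficiently small $\epsilon > 0$.
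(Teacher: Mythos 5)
Your proof is correct and follows essentially the same approach as the paper's: both directions use the same two main theorems in the same roles (Theorem~\ref{theorem:indecomposables-dense} to approximate $M$ by an indecomposable for the forward implication, Theorem~\ref{proposition:stability-indecomposability} to decompose nearby modules for the backward implication), together with Krull--Remak--Schmidt uniqueness to analyze matchings. The only differences are presentational: you state the forward direction in contrapositive form, and in the backward direction you split $\epsilon$ into thirds for cleaner bookkeeping (which is in fact slightly more careful than the paper's ``$\tfrac{\epsilon}{2}$-matching'' claim, which needs a small implicit shrink of $\delta$ to go through); your final remark on $d_I(0,N) > 0$ is unnecessary detail, since the paper already establishes that $d_I$ is an extended metric on finitely presentable modules, from which this is immediate.
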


%\cref{theorem:main-theorem} is a consequence of the following two results, which are of independent interest.
%Since, when $n\geq 2$, the poset $\Rbf^n$ is of wild representation type, indecomposable $n$-parameter persistence modules can be arbitrarily complicated, and thus the fact that every module can be approximated by an indecomposable does not substantially simplify the task of fully understanding multi-parameter persistence modules.
%This raises the question of whether there exists a simpler collection of indecomposable modules $\Fcal$ such that the $\Fcal$-decomposable modules are dense.
%Our second main result -- also a consequence of \cref{theorem:indecomposables-dense,proposition:stability-indecomposability} -- and its corollary answer this in the negative.

The last two corollaries concern the metric approximation of arbitrary multi-parameter persistence modules by
direct sums of modules in 
%modules whose indecomposable summands belong to \uli{direct sums of}
some specified family of indecomposables~$\Fcal$.
The first one says that, if all modules in an open set can be approximated by $\Fcal$-decomposable modules, then those same modules can already be approximated by modules in $\Fcal$.

\begin{restatable}{cor}{denseFdecomposables}
    \label{theorem:dense-F-decomposables}
Let $n\geq 2$.
Let $\Fcal$ be a collection of isomorphism classes of indecomposable, finitely presentable $n$-parameter persistence modules.
The interior of the closure of the set of $\Fcal$-decomposable modules is equal to the interior of the closure of $\Fcal$.
In particular, if the $\Fcal$-decomposable modules are dense, then the set $\Fcal$ must itself be dense.
\end{restatable}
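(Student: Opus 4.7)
Let $\Dcal$ denote the set of $\Fcal$-decomposable modules. The inclusion $\mathrm{int}(\overline{\Fcal}) \subseteq \mathrm{int}(\overline{\Dcal})$ is immediate from $\Fcal \subseteq \Dcal$, so the content of the statement is the reverse inclusion. Since $\mathrm{int}(\overline{\Dcal})$ is open, it suffices to establish $\mathrm{int}(\overline{\Dcal}) \subseteq \overline{\Fcal}$. Fixing $M \in \mathrm{int}(\overline{\Dcal})$ and $\epsilon > 0$, the task is therefore to produce some $F \in \Fcal$ with $d_I(M,F) < \epsilon$.

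My plan is to couple \cref{theorem:indecomposables-dense} with \cref{proposition:stability-indecomposability} to force an $\Fcal$-decomposable approximation of $M$ to behave essentially like a single summand. Using \cref{theorem:indecomposables-dense}, together with the fact that $\mathrm{int}(\overline{\Dcal})$ is an open neighborhood of $M$, I first pick an indecomposable module $N$ with $d_I(M,N)$ as small as desired and still with $N \in \mathrm{int}(\overline{\Dcal}) \subseteq \overline{\Dcal}$. Since $N$ is indecomposable, it is vacuously $\epsilon$-indecomposable with zero trivial summand, so by \cref{proposition:stability-indecomposability} some ball around $N$ consists entirely of $\epsilon$-indecomposable modules. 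Exploiting $N \in \overline{\Dcal}$, inside this ball I select an $\Fcal$-decomposable $P = F_1 \oplus \cdots \oplus F_k$ with $d_I(N,P)$ also as small as needed.

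The module $P$ is then simultaneously $\Fcal$-decomposable and $\epsilon$-indecomposable, and hence admits a decomposition $P \cong P_0 \oplus T$ with $P_0$ indecomposable and $T$ strictly $\epsilon$-trivial. Krull-Schmidt uniqueness for finitely presentable multi-parameter persistence modules forces these two decompositions to agree up to reordering, so there is an index $i_0$ with $F_{i_0} \cong P_0$, while $T \cong \bigoplus_{i \neq i_0} F_i$. Since a strictly $\epsilon$-trivial module is within interleaving distance $\epsilon/2$ of zero, one obtains $d_I(P, F_{i_0}) < \epsilon/2$, and choosing all approximations accurately enough, the triangle inequality yields $d_I(M, F_{i_0}) < \epsilon$, as required.

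The conceptual leverage in this plan is the joint use of the two main theorems: \cref{theorem:indecomposables-dense} supplies an indecomposable target inside $\overline{\Dcal}$, and \cref{proposition:stability-indecomposability} then guarantees that any $\Fcal$-decomposable module sufficiently close to that target is itself $\epsilon$-indecomposable, at which point Krull-Schmidt extracts a single $\Fcal$-summand as the desired approximation of $M$. The main technical point to watch is the bookkeeping of the various tolerances in the approximation chain $M \rightsquigarrow N \rightsquigarrow P \rightsquigarrow F_{i_0}$, but I do not anticipate any essential difficulty beyond a careful invocation of Krull-Schmidt once both main theorems are in hand.
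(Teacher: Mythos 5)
Your proposal is correct and takes essentially the same approach as the paper: approximate $M$ by an indecomposable $N$ via \cref{theorem:indecomposables-dense}, use \cref{proposition:stability-indecomposability} to force any nearby $\Fcal$-decomposable $P$ to be $\epsilon$-indecomposable, and then use Krull--Schmidt to identify the indecomposable summand of $P$ with some $F_{i_0} \in \Fcal$. The only organizational difference is that the paper factors this argument out into a standalone lemma (\cref{lemma:indecomposable-approximated-by-indecomposable}) applied to the indecomposable $N \in \overline{\Gcal}$, and leaves the Krull--Schmidt identification implicit; your version inlines it and also notes $T \cong \bigoplus_{i \neq i_0} F_i$, which is true but not needed — only $P_0 \cong F_{i_0}$ and the strict $\epsilon$-triviality of $T$ enter the final distance estimate.
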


The final corollary gives a precise sense in which any family of indecomposables $\Fcal$ with the property that the $\Fcal$-decomposable are dense must be very complicated.
In particular, it implies that multi-parameter spread decomposable modules \cite{blanchette-brustle-hanson} (sometimes called interval decomposable modules \cite{botnan-lesnick}) are nowhere dense, in contrast to the one-parameter case, where every module is interval decomposable.

\begin{restatable}{cor}{denseFcomplicated}
    \label{theorem:dense-F-complicated}
Let $n\geq 2$.
Let $\Fcal$ be a collection of isomorphism classes of indecomposable, finitely presentable $n$-parameter persistence modules.
If $\Fcal$ does not contain indecomposables of arbitrarily high pointwise dimension, then the set of $\Fcal$-decomposable modules is nowhere dense.
\end{restatable}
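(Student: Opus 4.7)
The plan is to reduce, via \cref{theorem:dense-F-decomposables}, to showing that $\Fcal$ itself is nowhere dense, and then to construct inside any open ball a witness module whose small neighborhood cannot meet $\Fcal$ because some of its structure maps have rank exceeding the uniform bound on pointwise dimensions of members of $\Fcal$.

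More concretely, fix $d \in \Nbb$ with $\dim F(r) \leq d$ for every $F \in \Fcal$ and $r \in \Rbf^n$; such a $d$ exists by hypothesis. By \cref{theorem:dense-F-decomposables}, the set of $\Fcal$-decomposable modules is nowhere dense if and only if $\Fcal$ itself is nowhere dense, so it suffices to show the latter. Given $M_0$ and $\delta > 0$, set $a := \delta/2$ and let $V := \coker\bigl(k_{[a\mathbf{1}, \infty)} \hookrightarrow k_{[0, \infty)}\bigr)$, where $\mathbf{1} = (1, \ldots, 1) \in \Rbf^n$. A direct computation shows that $V(r) \cong k$ exactly when $r \in [0, \infty)^n \setminus [a\mathbf{1}, \infty)^n$, that $d_I(V, 0) = a/2$, and that the structure map $V(0) \to V(s \mathbf{1})$ is an isomorphism for every $s \in [0, a)$. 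Setting $N := M_0 \oplus V^{\oplus(d+1)}$ then yields $d_I(M_0, N) \leq a/2 = \delta/4$ together with a structure map $N(0) \to N(s\mathbf{1})$ of rank at least $d+1$ for each $s \in [0, a)$.

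The conclusion uses the standard interleaving-rank inequality: if $F$ and $N$ are $\eta$-interleaved, the interleaving maps factor the structure map $N(0) \to N(2\eta\mathbf{1})$ through $F(\eta\mathbf{1})$, so $\dim F(\eta\mathbf{1}) \geq \rk(N(0) \to N(2\eta\mathbf{1}))$. For any $F \in B(N, \delta/8)$ we can choose some $\eta < \delta/4 = a/2$ for which $F$ and $N$ are $\eta$-interleaved, and then $2\eta < a$ forces $\dim F(\eta\mathbf{1}) \geq d+1 > d$, so $F \notin \Fcal$. Since $B(N, \delta/8) \subseteq B(M_0, \delta)$ by the triangle inequality, we obtain an open neighborhood of $N$ inside $B(M_0, \delta)$ that is disjoint from $\Fcal$, proving the claim.

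The main obstacle, though minor, is verifying the three claimed properties of $V$: finite presentability is immediate from its two-term free presentation; the identification of $V(r)$ and the value $d_I(V, 0) = a/2$ follow by unwinding the cokernel; and the isomorphism behaviour of $V$'s structure maps on $[0, a)^n$ is what makes the corresponding structure map of $V^{\oplus(d+1)}$ have rank $d+1$, which is exactly what obstructs approximation by modules of pointwise dimension at most $d$.
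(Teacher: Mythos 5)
Your proof is correct and uses essentially the same approach as the paper: both reduce via \cref{theorem:dense-F-decomposables}, both inflate the rank of a structure map by direct-summing many copies of a small $2\epsilon$-trivial one-dimensional module (your $V$ is exactly the paper's $\kbb_{[0,a)}$), and both conclude with the interleaving-rank factorization. The only difference is organizational: the paper argues in the contrapositive, showing that a nonempty interior of the closure of the $\Fcal$-decomposable modules forces $\Fcal$ to contain indecomposables of unbounded pointwise dimension, whereas you directly exhibit, inside any ball $B(M_0,\delta)$, a smaller ball $B(N,\delta/8)$ disjoint from $\Fcal$.
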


\subparagraph{Discussion}
%\cref{proposition:stability-indecomposability} motivates the following definition.
%\uli{we should probably move this definition}
%\luis{Is there a better way to say ``which we think is of independent interest''?
%We want to say something like the definition may be relevant in future work, I think.}

%\begin{dfn}
%    Let $\epsilon > 0$.
%    A persistence module $M : \Rbf^n \to \vect$ is \define{$\epsilon$-indecomposable} if we have $M \cong A \oplus B$ with $A$ indecomposable and $B$ strictly $\epsilon$-trivial.
%\end{dfn}

To put \cref{theorem:main-theorem} into context, we mention a similar instance of a set of persistence modules being open and dense:
The set of all finitely presentable one-parameter persistence modules that are \emph{staggered} (meaning that the finite endpoints of the intervals in their decomposition into indecomposables are all distinct) is dense but not open;
nevertheless, for every $\epsilon > 0$ the set of modules that decompose as $A \oplus B$ with $A$ staggered and $B$ strictly $\epsilon$-trivial is dense and open, as a direct consequence of the isometry theorem.

Since there are no general relationships between meager and Borel measure-zero sets~\cite{oxtoby}, \cref{theorem:main-theorem} does not imply that random $n$-parameter persistence modules, according to some suitable probability measure, are $\epsilon$-indecomposable.
One could then ask the following questions:
What are interesting and useful probability measures on spaces of multi-parameter persistence modules?
According to these probability measures, how do random multi-parameter persistence modules decompose?
Partial answers to these questions appear in \cite{alonso-kerber}, but, to the best of our knowledge, these questions are otherwise largely open.
%\luis{I added ``to the best of our knowledge''}

%We prove \cref{proposition:stability-indecomposability} in the generality of multi-parameter persistence modules, while we have given \cref{theorem:indecomposables-dense} and consequently \cref{theorem:main-theorem} only in the case of two-parameter persistence.
%We believe the results hold for any number of parameters greater than one.
%Similarly, the results extend to classes of modules more general than finitely presentable ones.
%We leave these extensions for future work.

\medskip

\begin{rem}[On the meaning of genericity]
Recall that, in a topological space, a generic property is one that holds for all points of some dense and open set.
Thus, \cref{theorem:main-theorem} says that, in the space of finitely presentable $n$-parameter persistence modules ($n\geq 2$) endowed with the interleaving distance, being $\epsilon$-indecomposable is generic, for every fixed $\epsilon > 0$.

\cref{theorem:main-theorem} also implies that, in the same space, the indecomposable modules form a \emph{residual set}, that is, a subset that can be written as a countable intersection of open dense sets.
The term \emph{generic} is also used for residual sets, but this is typically done only in the context of \emph{Baire spaces}, defined by the property that every residual set is dense.
The purpose of this restriction in terminology is to exclude the possibility that a set and its complement are both generic. In particular, if a space is meagre (a countable union of nowhere dense subsets), then each of its subsets is both residual and meagre, but not necessarily dense.
In fact, it is possible that a subset of a meagre space and its complement are both dense. For example, the subset of the rational numbers having an even numerator in its reduced fraction is dense and has a dense complement.

Since the space of finitely presented $n$-parameter persistence modules is not Baire (\cref{lemma:not-baire}), in the first version of this paper we avoided using the term generic when refering to indecomposable modules, and we raised the question whether there exists a well-behaved Baire space of $n$-parameter persistence modules in which the finitely presentable modules form a residual (and hence dense) subset.
As it turns out, such a Baire space can be constructed using results to appear in \cite{bauer-gusel-scoccola}; the upshot being that indecomposability is a generic property in a suitable Baire space which contains the finitely presented modules.
We now give more details.

%\cref{theorem:main-theorem} implies that the set of indecomposable, finitely presentable multi-parameter persistence modules can be written as a countable intersection of dense and open sets.
%In Baire spaces, \changed[countable intersections of dense, open sets are also called generic;]{by definition, \emph{residual sets} (countable intersections of open dense sets) are also dense.
%In this setting, a property that holds on a residual set is also called generic;} however, the space of finitely presentable $n$-parameter persistence modules is not Baire~(\cref{lemma:not-baire}).
%\uli{some results about metric completion of finite barcodes are mentioned in \url{https://arxiv.org/pdf/2206.06347.pdf}}
%\uli{complete this discussion and modify \cref{maintheorem}. move this paragraph right after, since we will need the discussion there.}

\smallskip

In the rest of this remark, we only consider $n$-parameter persistence modules with $n\geq 2$.
We start with a few definitions.
Let $M$ be an $n$-parameter persistence module.
We say that $M$ is \emph{q-tame} (respectively \emph{ephemeral}) if for every $r,s \in \Rbf^n$ such that $r_i < s_i$ for all $1 \leq i \leq n$, we have that the structure morphism $M(r) \to M(s)$ has finite rank (respectively is zero).
We say that $M$ is \emph{lower-semicontinuous} (respectively \emph{upper-semicontinuous}) if the canonical map $\colim_{r'<r}M(r') \to M(r)$ (respectively $M(r) \to \lim_{r < r'} M(r')$) is an isomorphism.

Following \cite{chazal-crawleybovey-desilva,berkouk-petit}, let $\Ocal$ be the \emph{observable category} of the category of q-tame $n$-parameter persistence modules, that is, the additive category obtained by taking the quotient of the category of q-tame $n$-parameter persistence modules by the subcategory spanned by the ephemeral $n$-parameter persistence modules.

\smallskip
\noindent\textit{Result 1} (in \cite{bauer-gusel-scoccola}): 
The category $\Ocal$ is equivalent to
the category of q-tame and lower semi-continuous $n$-parameter persistence modules,
and to
the category of q-tame and upper semi-continuous $n$-parameter persistence modules.
In both cases, the equivalence is given by embedding q-tame, semi-continuous modules into q-tame modules, and then taking the quotient by ephemerals.

\smallskip
%Result 0: q-tame + lower semi-continuous is same as observable category of q-tame.
%(canonical quotient map is an equivalence)

\noindent\textit{Result 2} (in \cite{bauer-gusel-scoccola}): 
The category $\Ocal$ is Krull--Schmidt, meaning that every object of $\Ocal$ has an essentially unique decomposition as a direct sum of indecomposable objects.
\smallskip

Let $\Ocal_{met}$ be the set of isomorphism classes of objects of $\Ocal$ (which can be seen to be a set, as opposed to a proper class), endowed with the interleaving distance.

\smallskip
\noindent\textit{Result 3} (in \cite{bauer-gusel-scoccola}): 
The extended pseudometric space $\Ocal_{met}$ is in fact an extended metric space, and it is metrically complete.
\smallskip

Hence, by Results 2 and 3, as a topological space $\Ocal_{met}$ is Baire, and every element of it has a well-defined and unique decomposition into indecomposables.
Note that this relies on the fact that $\Ocal_{met}$ is an extended metric space and not just an extended pseudometric space, so that two objects $\Ocal$ are at interleaving distance zero if and only if they are isomorphic.
%Remark:
%(a) follows from [cite] and result 0.
%(b) is similar to [Cruz] and [Scoccola].
Now, let $\Fcal_{met} \subseteq \Ocal_{met}$ be the metric closure of the set of (isomorphism classes of) finitely presentable modules in $\Ocal_{met}$.

\smallskip
\noindent\textit{Result 4} (same proof as \cref{proposition:stability-indecomposability}):
For every $\epsilon > 0$ and for every indecomposable, finitely presented persistence module $M$, there exists an open neighborhood of $M$ in $\Fcal_{met}$ only containing $\epsilon$-trivial modules (that is modules decomposing as a direct sum of an indecomposable and a module at interleaving distance strictly less than $\epsilon/2$ from $0$).
\smallskip

The space $\Fcal_{met}$ thus has the following properties: it is Baire; every element of it has a well-defined, unique decomposition into indecomposables; for every $\epsilon > 0$, the $\epsilon$-indecomposables are open (Result 4) and dense (\cref{theorem:indecomposables-dense}).
By taking $\epsilon = 1/n$, we see that (the isomorphism classes of) finitely presented indecomposables form a residual set of $\Fcal_{met}$, so that, in $\Fcal_{met}$, being indecomposable is a generic property.
Finally, note that $\Fcal_{met}$ can be realized as an actual space of persistence modules: by Result 1, one can take the space of isomorphism classes of q-tame and lower semi-continuous (or alternatively upper-semicontinuous) persistence modules that can be approximated arbitrary well (in the interleaving distance) by finitely presented modules.
\end{rem}

\medskip

Although \cref{corollary:structural-stability} says that the indecomposable modules, together with the zero module, are the only multi-parameter persistence modules for which decomposition into indecomposables is a stable operation, recent work of Bjerkevik \cite[Theorem~5.3~and~Conjecture~6.1]{bjerkevik2023stabilizing} shows that decomposition into indecomposables may behave much better if one is allowed to consider certain subquotients of the modules.
Note that computing the indecomposable summands of multi-parameter persistence modules is feasible in practice \cite{qpa,dey-xin}.

\medskip

For finite posets, having a global bound on the pointwise dimension of all indecomposable representations (\emph{pointwise finite representation type}) is equivalent to admitting only a finite number of isomorphism types of indecomposable representations (\emph{finite representation type}).
This follows from Roiter's theorem \cite{roiter};
note also that, for a finite poset, the property of having finite representation type is independent of the field, by~\cite{loupias}.
Although there are infinitely many isomorphism classes of indecomposable one-parameter persistence modules, the infinite poset $\Rbf$ is still of pointwise finite representation type, since the only indecomposables are the interval modules, which have pointwise dimension at most one \cite{crawley-boevey}.
%the decomposition theorem for one-parameter persistence modules implies that the category of finitely presentable one-parameter persistence modules is of pointwise finite representation type.
%, which can be interpreted as a continuous extension of the fact that quivers of type $A$ are of finite representation type.
Since $\Rbf^n$ ($n\geq 2$) contains finite full subposets of infinite representation type, it is clear that it is not of pointwise finite representation type.
\cref{theorem:dense-F-complicated} strengthens this last fact by stating that
any subcategory
of finitely presentable $n$-parameter persistence modules that is dense in the interleaving distance cannot be of pointwise finite representation type.

%\todo{Is there anything we can say here about corollaries 3 and 4?}

\medskip

In this paper, we focus on the topology induced by the interleaving distance, since this is the most widely used choice of distance between multi-parameter persistence modules.
Nevertheless, several other distances have been proposed \cite{scolamiero-chacholski-lundman-ramanujan-oberg, bjerkevik-lesnick,giunti-nolan-otter-waas,bubenik-scott-stanley}, and it would be interesting to understand the topological properties of the set of indecomposables when other distances are used.
In this direction, we believe that the main results in this paper also hold when the interleaving distance is replaced with any of the presentation distances of Bjerkevik and Lesnick \cite{bjerkevik-lesnick}.

\subparagraph{Structure of the paper}
In \cref{section:background}, we recall necessary background and introduce notation.
In \cref{section:indecomposables-dense}, we prove \cref{theorem:indecomposables-dense}.
In \cref{section:almost-indecomposable-open}, we prove \cref{proposition:stability-indecomposability}.
In \cref{section:consequences}, we prove \cref{theorem:main-theorem,corollary:structural-stability,theorem:dense-F-decomposables,theorem:dense-F-complicated}.
In \cref{appendix}, we give proofs of known results.

%\luis{\begin{itemize}
%    \item the result for finitely presentable still holds if we use any of the presentation distances instead of the interleaving distance (depending on time and space we could just prove this in this version)
%    \item the usual smoothing of modules does not ``fix things'', since the image of the smoothing operation is the whole space of finitely presentable modules (depending on time and space we could just prove this in this version)
%    \item existence of indecomposables with infinite pointwise dimension
%\end{itemize}}
%

\subparagraph{Acknowledgements}
We thank H\r{a}vard Bjerkevik, Barbara Giunti, and Francesca Tombari for interesting conversations and useful observations.
L.S.~thanks
Nicolas Berkouk,
Mathieu Carrière,
René Corbet,
Christian Hirsch,
Claudia Landi,
Vadim Lebovici,
David Loiseaux,
Ezra Miller,
Steve Oudot,
François Petit,
and Alexander Rolle
for discussions during the \emph{Metrics in Multiparameter Persistence workshop} (Lorentz Center, 2021).
This research has been conducted while U.B.~was participating in the program \emph{Representation Theory: Combinatorial Aspects and Applications};
we thank the Centre for Advanced Study (CAS) at the Norwegian Academy of Science and Letters for their hospitality and support.
U.B.~was supported by the German Research Foundation (DFG) through the Collaborative Research Center SFB/TRR 109 \emph{Discretization in Geometry and Dynamics} -- 195170736.
L.S.~was partially supported by the National Science Foundation through grant CCF-2006661 and CAREER award DMS-1943758, as well as by EPSRC grant ``New Approaches to Data Science: Application Driven Topological Data Analysis'', EP/R018472/1.
For the purpose of Open Access, the authors have applied a CC BY public copyright licence to any Author Accepted Manuscript (AAM) version arising from this submission.

\section{Background}
\label{section:background}

Although we use some notions from category theory, we only assume familiarity with basic concepts; in particular: categories, isomorphisms, functors, functor categories, direct sums, and kernels and cokernels.
See, e.g., \cite{maclane} for an introduction.

Throughout the paper, we fix a field $\kbb$ and let $\vect$ denote the category of finite dimensional $\kbb$-vector spaces.

An \define{extended metric space} consists of a set $A$ together with a function $d : A \times A \to \Rbb \cup \{\infty\}$ such that, for all $a,b \in A$ we have $d(a,b) = d(b,a) \geq 0$ and $d(a,b) = 0$ if and only if $a=b$; and for all $a,b,c \in A$ we have $d(a,c) \leq d(a,b) + d(b,c)$, with the convention that $r + \infty = \infty + r = \infty$ for all $r \in \Rbb \cup \{\infty\}$.

\subparagraph{Posets}
We let $\Rbf$ denote the poset of real numbers with its standard ordering and reserve the notation $\Rbb$ for the metric space of real numbers.
Let $n \in \Nbb$ and consider the product poset $\Rbf^n$.
By an abuse of notation, if $r \in \Rbf$, we interpret it as an element $r \in \Rbf^n$ all of whose coordinates are equal to $r$.
Thus, if for instance $s \in \Rbf^n$ and $r \in \Rbf$, then $s + r \in \Rbf^n$ denotes the element $(s_1 + r, s_2 + r, \dots, s_n + r)$. 
We let $\Zbf \subseteq \Rbf$ denote the full subposet spanned by the integers.
If $n \in \Nbb$ and $1 \leq k \leq n$, we denote the $k$th \emph{standard basis vector} by $\ebf_k \in \Zbf^n \subseteq \Rbf^n$.

We interpret any poset $\Pscr$ as a category with objects the elements of the poset and a unique morphism $i \to j \in \Pscr$ whenever $i \leq j$ and no morphism otherwise.

\subparagraph{Persistence modules}
Let $\Pscr$ be a poset.
A pointwise finite dimensional \define{$\Pscr$-persistence module} is a functor $M : \Pscr \to \vect$.
Note that all persistence modules in this paper are assumed to be pointwise finite dimensional and that, for the sake of brevity, we omit this qualifier.
When the indexing poset $\Pscr$ is clear from the context, we may refer to a $\Pscr$-persistence module simply as a persistence module or as a module.
The collection of all $\Pscr$-persistence modules forms a category, where the morphisms are given by natural transformations.

If $M : \Pscr \to \vect$ is a $\Pscr$-persistence module and $i \leq j \in \Pscr$, we let $\phi^M_{i,j} : M(i) \to M(j)$ denote the structure morphism corresponding to the morphism $i \to j$ in $\Pscr$ seen as a category.

Let $\Pscr$ be a poset and let $i \in \Pscr$.
Define the persistence module $\Psf_i : \Pscr \to \vect$ by
\[
    \Psf_i(j) =
    \begin{cases}
        \kbb & i \leq j\\
        0 & \text{else},
    \end{cases}
\]
with all structure morphisms that are not forced to be zero being the identity $\kbb \to \kbb$.

A $\Pscr$-persistence module is \define{finitely presentable} if it is isomorphic to the cokernel of a morphism $\bigoplus_{j \in J} \Psf_j \to \bigoplus_{i \in I} \Psf_i$, where $I$ and $J$ are finite multisets of elements of $\Pscr$.
This is consistent with the standard nomenclature in representation theory and algebra since a $\Pscr$-persistence module is finitely generated and projective precisely if it is isomorphic to a finite direct sum of modules of the form $\Psf_i$.

\subparagraph{Restrictions and extensions}
If $\Qscr \subseteq \Pscr$ is an inclusion of posets and $M : \Pscr \to \vect$ is a $\Pscr$-persistence module, the \define{restriction} of $M$ to $\Qscr$, denoted $M|_\Qscr : \Qscr \to \vect$, is the $\Qscr$-persistence module obtained by precomposing $M : \Pscr \to \vect$ with the inclusion $\Qscr \to \Pscr$.

We consider two main types of subposets of $\Rbf^n$.
In one case, we let $\{r_1 < r_2 < \dots < r_k\}$ be a finite set of real numbers and consider the product poset $\{r_1 < r_2 < \dots < r_k\}^n \subseteq \Rbf^n$.
We refer to a subposet of $\Rbf^n$ obtained in this way as a \define{finite grid}.
In the other case, we let $\{r_i\}_{i \in \Zbb}$ be a countable set of real numbers without accumulation points and such that $r_i < r_{i+1}$ for all $i \in \Zbb$, and again consider the product poset $\{r_i\}^n \subseteq \Rbf^n$.
We refer to a subposet of $\Rbf^n$ obtained in this way as a \define{countable grid}.
A \define{regular grid} is any countable grid of the form $(s \Zbf)^n = \{s \cdot m : m \in \Zbf\}^n \subseteq \Rbf^n$ for $s > 0 \in \Rbb$, where $\Zbf$ denotes the poset of integers.
Note that, in the definitions of finite and countable grid, one could take a product of different subposets of $\Rbf$, instead of an $n$-fold product of the same poset.
Since we do not need this generality, we choose the more restrictive definition for simplicity.
%\uli{maybe call this isotropic? i would expect a finite grid to admit different sets in each coordinate}
%\luis{For simplicity, as this simpler notation does make a few points less notation heavy, I would prefer saying here that one could also allow for different sets in each coordinate, but that we do not need this generality.}

Let $\Pscr \subseteq \Rbf^n$ be a finite or countable grid.
Given $M : \Pscr \to \vect$ define $\widehat{M} : \Rbf^n \to \vect$
by
\[
    \widehat{M}(r) =
    \begin{cases}
        M\left(\sup\{ p \in \Pscr : p \leq r\}\right) & \text{if %
        $\{ p \in \Pscr : p \leq r\} \neq \emptyset$},\\
        0 & \text{else};
    \end{cases}
\]
for its structure morphisms use the structure morphisms of $M$ and the fact that $\sup\{p \in \Pscr : p \leq r\} \leq \sup\{p \in \Pscr : p \leq s\}$ whenever $r \leq s$ and there exists $p \in \Pscr$ such that $p \leq r$.
We refer to $\widehat{M}$ as the \define{extension} of $M$ along the inclusion $\Pscr \subseteq \Rbf^n$.
As a side remark, we note that this notion of extension is an instance of the notion of left Kan extension from category theory, but we do not make use of this fact.

A module $M : \Rbf^n \to \vect$ is a \emph{$\Pscr$-extension} if there exists a module $N : \Pscr \to \vect$ such that $M \cong \widehat{N}$.
%In that case, we also say that $M$ is a $\Pscr$-extension of $N$.
%
If $\Pscr \subseteq \Rbf^n$ is a finite or countable grid and $M : \Rbf^n \to \vect$, define the \define{restriction-extension} of $M$ along $\Pscr$, denoted $M_\Pscr : \Rbf^n \to \vect$, as the extension along $\Pscr \subseteq \Rbf^n$ of the restriction $M|_\Pscr : \Pscr \to \vect$.
Given $M,N : \Rbf^n \to \vect$ and a morphism $f : M \to N$, there is a morphism $f_\Pscr : M_\Pscr \to N_\Pscr$ given by extending the restriction $f|_\Pscr : M|_\Pscr \to N|_\Pscr$.
It follows immediately that this construction is functorial in the sense that given modules $A,B,C : \Rbf^n \to \vect$ and morphism $f : A \to B$ and $g : B \to C$, we have $g_\Pscr \circ f_\Pscr = (g \circ f)_\Pscr : A_\Pscr \to C_{\Pscr}$.

%\luis{perhaps do everything with: \url{https://arxiv.org/pdf/1410.2822.pdf}, or with Jacobson 3.8. or maybe just do what we are doing but spell out how we use Azumaya.}

\begin{lem}
    \label{lemma:fp-is-extension-finite-poset}
    Let $n \geq 1$ and let $M : \Rbf^n \to \vect$.
    Then $M$ is finitely presentable if and only if there exists a finite grid $\Pscr \subseteq \Rbf^n$ such that $M \cong M_\Pscr$.
\end{lem}
\begin{proof}
    Assume that $M$ is finitely presentable, so that $M$ is isomorphic to the cokernel of a morphism $\bigoplus_{j \in J} \Psf_j \to \bigoplus_{i \in I} \Psf_i$ with $I$ and $J$ finite subsets of $\Rbf^n$.
    Consider the set $S = \{x_k \in \Rbf : x \in I \cup J, \, 1 \leq k \leq n\}$ of all the coordinates of the points in $I \cup J$, and the finite grid $\Pscr = S^n \subseteq \Rbf^n$.
    It is straightforward to see that $M \cong M_\Pscr$.

    Now assume that $M \cong \widehat{N}$ with $\Pscr \subseteq \Rbf^n$ a finite grid and $N : \Pscr \to \vect$.
    Since the poset $\Pscr$ has finitely many elements and $N$ is pointwise finite dimensional, there exists an epimorphism $e : \bigoplus_{i \in I} \Psf_i \to N$, for some finite multiset $I$ of elements of $\Pscr$.
    Similarly, if $K$ is the kernel of the morphism $e$, there must exist an epimorphism $\bigoplus_{j \in J} \Psf_j \to K$, with $J$ finite.
    Putting these two morphisms together we see that $N$ is isomorphic to the cokernel of a morphism $\bigoplus_{j \in J} \Psf_j \to \bigoplus_{i \in I} \Psf_i$.
    It is easy to see that $M$ is then isomorphic to the cokernel of the induced morphism $\bigoplus_{j \in J} \widehat{\Psf_j} \to \bigoplus_{i \in I} \widehat{\Psf_i}$, and that $\widehat{\Psf_r} = \Psf_r : \Rbf^n \to \vect$ for all $r \in \Pscr$.
\end{proof}

It is straightforward to see that, in a grid extension persistence module, the structure maps that do not cross the grid are isomorphisms, as recorded in the following lemma.

\begin{lemma}
    \label{lemma:structure-map-iso}
    Let $\Pscr$ be a finite or countable grid and let $M : \Rbf^n \to \vect$ be a $\Pscr$-extension.
    Let $r < s \in \Rbf^n$ such that every $p \in \Pscr$ with $p \leq s$ also satisfies $p \leq r$.
    Then the structure morphism $\phi^M_{r,s} : M(r) \to M(s)$ is an isomorphism.
%    \qed
\end{lemma}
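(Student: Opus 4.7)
The plan is to unwind the definition of the extension construction $\widehat{(-)}$ and observe that, under the hypothesis, the two fiber sets $\{p \in \Pscr : p \leq r\}$ and $\{p \in \Pscr : p \leq s\}$ coincide, so that the structure morphism of $\widehat{N}$ between $r$ and $s$ is tautologically the identity.

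First I would fix an isomorphism $M \cong \widehat{N}$ for some $N : \Pscr \to \vect$, which exists because $M$ is a $\Pscr$-extension, and thereby reduce the claim to showing that $\phi^{\widehat{N}}_{r,s} : \widehat{N}(r) \to \widehat{N}(s)$ is an isomorphism. From $r \leq s$ one automatically obtains the inclusion $\{p \in \Pscr : p \leq r\} \subseteq \{p \in \Pscr : p \leq s\}$, while the hypothesis provides the reverse inclusion, so these two sets coincide.

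Next I would split into two cases. If both sets are empty, then by definition of $\widehat{N}$ we have $\widehat{N}(r) = \widehat{N}(s) = 0$, and the structure morphism is trivially an isomorphism. If both sets are nonempty, I need to check that their common supremum $p^\star$ actually lies in $\Pscr$, so that $\widehat{N}(r) = N(p^\star) = \widehat{N}(s)$. For a finite grid this is immediate by finiteness; for a countable grid $\{r_i\}^n$, the absence of accumulation points guarantees that in each coordinate the set $\{r_i : r_i \leq r_k\}$ (where $r_k$ denotes the $k$-th coordinate of $r$) has a maximum, so the coordinate-wise supremum is attained in $\Pscr$.

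With $p^\star \in \Pscr$ in hand, the structure morphism $\widehat{N}(r) \to \widehat{N}(s)$ is, by the very definition of $\widehat{N}$, the image under $N$ of the morphism $p^\star \to p^\star$ in $\Pscr$, namely the identity of $N(p^\star)$, which is certainly an isomorphism. No real obstacle is expected here: the statement is essentially a direct unfolding of the definition of $\widehat{(-)}$, and the only nontrivial point is the attainment of suprema in countable grids, which is handled by the no-accumulation-points condition built into the definition.
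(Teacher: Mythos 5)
The paper marks this lemma as immediate (\textnormal{``it is straightforward to see''}) and supplies no proof, so there is nothing to compare against; what matters is only whether your unfolding of the definition is sound. It is. You correctly reduce to $M = \widehat{N}$, observe that $r \leq s$ gives $\{p \in \Pscr : p \leq r\} \subseteq \{p \in \Pscr : p \leq s\}$ while the hypothesis gives the reverse inclusion, split into the empty and nonempty cases, and in the nonempty case use the built-in no-accumulation-points condition (together with strict monotonicity $r_i < r_{i+1}$) to see that the coordinate-wise supremum is attained in the grid, so that the structure morphism is $N$ applied to $p^\star \to p^\star$, i.e., the identity. This is a complete and correct proof; the one stylistic caveat is the notational clash between $r_i$ as a grid coordinate and $r_k$ as the $k$-th coordinate of the point $r$, which is worth cleaning up but does not affect correctness.
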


\subparagraph{Decomposition of persistence modules}
The proofs of the results in this section are standard; we include them in \cref{appendix}, for completeness.
Let $\Pscr$ be a poset and let $M : \Pscr \to \vect$.
We say that $M$ is \define{decomposable} if there exist $A,B : \Pscr \to \vect$ non-zero such that $M \cong A \oplus B$.
If $M$ is non-zero and not decomposable, we say that $M$ is \define{indecomposable}.

%\luis{note that the following is not true for general categories of modules
%\url{https://mathoverflow.net/questions/288746/local-endomorphism-rings-and-indecomposable-modules}.
%But it is true for persistence modules}...}

The next two results follow from \cite{botnan-crawleybovey} and the Krull--Remak--Schmidt--Azumaya theorem~\cite{azumaya}.
%For completeness, we include their proofs in \cref{appendix}.
%\uli{good references are: \cite[Section 3.4]{MR1009787} and \cite{MR3431480}. clarify the assumptions here.}

%\uli{i'm not sure yet what's the most elegant way to argue here. azumaya still considers modules. magnus and bill have existence of decompositions. }

\begin{restatable}{thm}{theoremdecomposition}
    \label{theorem:decomposition}
    Let $\Pscr$ be a poset and let $M : \Pscr \to \vect$ be a pointwise finite dimensional $\Pscr$-persistence module.
    There exists a set $I$ and an indexed family of indecomposable $\Pscr$-persistence modules $\{M_i\}_{i \in I}$ such that $M \cong \bigoplus_{i \in I} M_i$.
    Moreover, if $M \cong \bigoplus_{j \in J} M_j$ for another indexed family of indecomposable $\Pscr$-persistence modules $\{M_j\}_{j \in J}$, then there exists a bijection $f : I \to J$ such that $M_i \cong M_{f(i)}$ for all $i \in I$.
\end{restatable}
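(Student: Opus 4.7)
The plan is to combine two classical results: existence will follow from the decomposition theorem of Botnan and Crawley-Boevey \cite{botnan-crawleybovey}, which is itself a generalization to arbitrary posets of Crawley-Boevey's decomposition theorem for pointwise finite dimensional representations of totally ordered sets, while uniqueness will follow from the Krull--Remak--Schmidt--Azumaya theorem \cite{azumaya}. The proof is therefore essentially a matter of citing the correct statements in the correct order and checking that their hypotheses are met.

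For existence, I would invoke the Botnan--Crawley-Boevey theorem directly: given an arbitrary poset $\Pscr$ and any pointwise finite dimensional $M : \Pscr \to \vect$, that theorem produces a decomposition $M \cong \bigoplus_{i \in I} M_i$ in which each summand $M_i$ is indecomposable, and moreover each $M_i$ has \emph{local} endomorphism ring. The latter property is the crucial additional information that will power the uniqueness argument; it is what distinguishes Botnan--Crawley-Boevey's statement from the bare assertion of existence of an indecomposable decomposition.

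For uniqueness, I would feed this decomposition into the Krull--Remak--Schmidt--Azumaya theorem. Azumaya's theorem asserts that whenever an object decomposes as a direct sum of summands with local endomorphism rings, any other decomposition into indecomposables is equivalent to it in the following sense: there is a bijection between the index sets matching summands up to isomorphism. Thus, given a second decomposition $M \cong \bigoplus_{j \in J} M_j$ into indecomposables as in the statement, Azumaya's theorem (applied to the decomposition from the preceding paragraph, whose summands have local endomorphism rings) produces the desired bijection $f : I \to J$ with $M_i \cong M_{f(i)}$.

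There is no real obstacle here: both cited theorems do exactly what is needed, and their hypotheses are met automatically, since $M$ is assumed pointwise finite dimensional and we only need one of the two decompositions in Azumaya's theorem to have summands with local endomorphism rings. The only mild subtlety worth flagging in a written-up version is that, in Azumaya's formulation used here, the second decomposition need only be into indecomposables (not into summands known \emph{a priori} to have local endomorphism rings), which is precisely the hypothesis available to us in the statement of the theorem.
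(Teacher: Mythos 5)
Your approach matches the paper's exactly: existence from Botnan--Crawley-Boevey, uniqueness from Krull--Remak--Schmidt--Azumaya applied with the local-endomorphism-ring property of the summands, and you even correctly identify the relevant subtlety that only one of the two decompositions needs local endomorphism rings a priori (this is precisely why the paper cites Theorem~1(ii) of \cite{azumaya}).

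One caveat: your claim that the hypotheses of the cited theorems are ``met automatically'' brushes past two points that the paper addresses explicitly. First, Azumaya's 1950 statement is phrased in terms of \emph{completely primary} endomorphism rings rather than local ones, so one must verify the equivalence; the paper does this in \cref{lemma:local-implies-completely-primary}. Second, Azumaya's theorem concerns modules over a ring, not objects of a functor category like $\vect^{\Pscr}$; the paper invokes an additive, fully faithful embedding of the category of $\Pscr$-persistence modules into a module category to transport the result. Neither issue derails your argument, but a careful write-up should acknowledge them rather than assert the hypotheses hold automatically.
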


\begin{restatable}{lemma}{indecomposablelocalring}
    \label{lemma:indecomposable-local-ring}
    Let $\Pscr$ be a poset.
    A persistence module $M : \Pscr \to \vect$ is indecomposable if and only if its endomorphism ring $\End(M)$ is local.
\end{restatable}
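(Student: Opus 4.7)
The plan is to prove both implications of the equivalence. The easier direction, that locality of $\End(M)$ implies indecomposability of $M$, I would handle by contraposition using the standard fact that a local ring has no idempotents other than $0$ and $1$. If $M \cong A \oplus B$ with $A, B$ both nonzero, then the endomorphism $e \in \End(M)$ given by the projection onto $A$ followed by the inclusion into $M$ is an idempotent distinct from $0$ and $1$; in a local ring, however, a nontrivial idempotent $e$ leads to a contradiction because $e$ and $1 - e$ cannot both be non-units while $e(1-e) = 0$. Hence $\End(M)$ fails to be local.

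For the harder direction, I would imitate Fitting's lemma, which is available pointwise since $M(p) \in \vect$ is finite dimensional for every $p$. Given $f \in \End(M)$, define subfunctors $\im^\infty(f)$ and $\ker^\infty(f)$ of $M$ by
\[
\im^\infty(f)(p) := \bigcap_{n \geq 0} \im\bigl(f^n(p)\bigr), \qquad \ker^\infty(f)(p) := \bigcup_{n \geq 0} \ker\bigl(f^n(p)\bigr).
\]
Both chains stabilize at each $p$ by finite dimensionality, and classical Fitting gives $M(p) = \im^\infty(f)(p) \oplus \ker^\infty(f)(p)$. Naturality of $f$ ensures that the structure morphisms $\phi^M_{p,q}$ send $\im(f^n)$ into $\im(f^n)$ and $\ker(f^n)$ into $\ker(f^n)$, so these assemble into genuine subfunctors and $M \cong \im^\infty(f) \oplus \ker^\infty(f)$ as persistence modules.

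If $M$ is indecomposable, exactly one of these summands is zero for every $f$. When $\ker^\infty(f) = 0$, the map $f$ is pointwise injective and pointwise surjective, hence an automorphism. When $\im^\infty(f) = 0$, at each $p$ some power $f(p)^{n(p)}$ vanishes, so $(1-f)(p)$ is invertible with inverse $\sum_{i=0}^{n(p)-1} f(p)^i$, making $1 - f$ a pointwise, and therefore a natural, isomorphism. To conclude locality I would show that the set $\mathfrak{m}$ of non-automorphisms is a two-sided ideal. Closure under left and right multiplication follows from the fact that, pointwise, an isomorphism of finite dimensional vector spaces cannot factor through a non-isomorphism; closure under addition is the crux: if $f$ and $g$ were non-units with $f+g$ a unit, then $h := (f+g)^{-1}f$ and $1-h = (f+g)^{-1}g$ would both be non-units, but the dichotomy above applied to $h$ forces one of $h$ or $1-h$ to be an automorphism, a contradiction. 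Since $\mathfrak{m}$ is then a proper two-sided ideal containing every non-unit, $\End(M)$ is local.

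The main obstacle will be the step that lifts the pointwise Fitting decomposition to a decomposition of subfunctors, together with the closure-under-addition step for $\mathfrak{m}$; both rest on careful use of the naturality of $f$ and on the fact that in finite dimensional vector spaces pointwise injectivity and surjectivity coincide. As a shortcut, one could alternatively invoke \cite{botnan-crawleybovey}, whose main theorem directly gives that any pointwise finite dimensional $\Pscr$-persistence module decomposes into indecomposables with local endomorphism rings; applied to an indecomposable $M$, the decomposition has a single nonzero summand isomorphic to $M$, so $\End(M)$ is local.
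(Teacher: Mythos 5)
Your proof is correct. The paper itself dispatches this lemma in one line by citing \cite[Theorem~1.1]{botnan-crawleybovey}, which is exactly the shortcut you mention at the end: that theorem states that every pointwise finite dimensional persistence module over a small category decomposes into indecomposables with local endomorphism rings, so an indecomposable $M$ is its own decomposition and $\End(M)$ is local, while the converse (local $\Rightarrow$ indecomposable) is the standard idempotent argument. Your detailed Fitting-lemma argument is a genuinely different, self-contained route that the paper does not take. It buys you independence from the citation and makes explicit why pointwise finite dimensionality is the operative hypothesis: the pointwise Fitting decomposition $M(p) = \im^\infty(f)(p) \oplus \ker^\infty(f)(p)$ exists at every $p$, naturality of $f$ ensures the two pieces are subfunctors (so the stabilization index $n(p)$ need not be uniform in $p$), and indecomposability forces the dichotomy that for every $f$, either $f$ or $1-f$ is a pointwise isomorphism. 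The argument for closure of non-units under addition via $h = (f+g)^{-1}f$ is the standard way to pass from that dichotomy to locality; the closure under multiplication follows, as you note, because over finite dimensional vector spaces $(gf)(p)$ being an isomorphism forces both $f(p)$ and $g(p)$ to be isomorphisms. The only thing worth flagging is the degenerate case $M=0$: it is excluded by the convention that indecomposable means nonzero, and correspondingly $\End(0)$ is the zero ring, which is not local; your easier direction implicitly relies on this but does not state it. In short: the citation-based proof is what the paper does and is shorter; your Fitting argument is a more elementary and more explicit alternative that works here precisely because $\vect$-valued modules are pointwise finite dimensional.
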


\begin{proof}
    This is a direct consequence of \cite[Theorem~1.1]{botnan-crawleybovey}.
\end{proof}

The following result states that direct sum decompositions behave well with respect to restrictions and extensions; its proof is straightforward.

\begin{lem}
    \label{lemma:decomposition-restriction-extension}
    Let $\Pscr \subseteq \Rbf^n$ be a subposet.
    Let $M : \Rbf^n \to \vect$ decompose as $M \cong \bigoplus_{i\in I} M_i$ for some indexed family $\{M_i\}_{i \in I}$ of persistence modules.
    Then $M|_\Pscr \cong \bigoplus_{i\in I} (M_i)|_\Pscr$.
    Similarly, if $\Pscr$ is a finite or countable grid and $M : \Pscr \to \vect$ decomposes as $M \cong \bigoplus_{i\in I} M_i$ for some indexed family $\{M_i\}_{i \in I}$ of $\Pscr$-persistence modules, then $\widehat{M} \cong \bigoplus_{i\in I} \widehat{M_i}$.
\end{lem}

The next result asserts that finitely presentable persistence modules decompose as a finite direct sum of indecomposable, finitely presentable modules.
%This holds in the generality of persistence modules indexed by an arbitrary poset, but we do not need this generality.
%\luis{Skip the above, since I am not completely confident about this, actually.}
%, and in this case we can give a proof only using the concepts and results we have introduced so far.
%\uli{somehow cref and restatable are broken in this format. The cref for this is \cref{lemma:decomposition-fp}. The appendix uses yet a different code. fixed by not loading thmtools}

\begin{restatable}{lem}{decompositionfp}
    \label{lemma:decomposition-fp}
    Let $n \geq 1$ and let $M : \Rbf^n \to \vect$.
    If $M$ is finitely presentable, then $M \cong \bigoplus_{i \in I} M_i$ with $\{M_i\}_{i \in I}$ a finite family of finitely presentable indecomposable $\Rbf^n$-persistence modules.
\end{restatable}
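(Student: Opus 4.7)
The plan is to reduce from $\Rbf^n$ to a finite grid, where finite-dimensional Krull--Remak--Schmidt applies, and then to transfer the resulting finite indecomposable decomposition back to $\Rbf^n$ via extension.

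First, since $M$ is finitely presentable, \cref{lemma:fp-is-extension-finite-poset} gives a finite grid $\Pscr \subseteq \Rbf^n$ and a module $N : \Pscr \to \vect$ with $M \cong \widehat{N}$. Because $\Pscr$ has finitely many elements and $N$ is pointwise finite dimensional, the total dimension $\sum_{p \in \Pscr} \dim_\kbb N(p)$ is finite. A straightforward induction on this total dimension (or a direct application of \cref{theorem:decomposition} combined with the observation that no summand may have zero total dimension) yields a finite decomposition $N \cong \bigoplus_{i \in I} N_i$ with each $N_i$ indecomposable and $I$ finite. Applying \cref{lemma:decomposition-restriction-extension} then gives $M \cong \widehat{N} \cong \bigoplus_{i \in I} \widehat{N_i}$, and each $\widehat{N_i}$ is a $\Pscr$-extension of a module on a finite grid, hence itself finitely presentable by \cref{lemma:fp-is-extension-finite-poset}.

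What remains is the key step: showing each $\widehat{N_i}$ is indecomposable as an $\Rbf^n$-module. I would argue as follows. Suppose $\widehat{N_i} \cong A \oplus B$ for some $A, B : \Rbf^n \to \vect$. By \cref{lemma:structure-map-iso}, every structure morphism of $\widehat{N_i}$ that does not cross a hyperplane of $\Pscr$ is an isomorphism; the same must then be true of the corresponding structure morphisms of $A$ and $B$, since a direct sum of linear maps is an isomorphism iff each summand is. Moreover, $A(r) = B(r) = 0$ whenever $\widehat{N_i}(r) = 0$, in particular whenever no $p \in \Pscr$ satisfies $p \leq r$. These two properties together imply $A \cong \widehat{A|_\Pscr}$ and $B \cong \widehat{B|_\Pscr}$, because both $A$ and the extension $\widehat{A|_\Pscr}$ agree on $\Pscr$ and both have the property of being "constant on grid cells" with vanishing below $\Pscr$, so the universal property (or a direct verification from the definition of $\widehat{\,-\,}$) identifies them. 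Restricting the decomposition to $\Pscr$ yields $N_i \cong A|_\Pscr \oplus B|_\Pscr$; since $N_i$ is indecomposable, one of $A|_\Pscr$ or $B|_\Pscr$ is zero, whence $A = 0$ or $B = 0$.

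The main obstacle I anticipate is precisely this last step, namely justifying that any direct summand $A$ of the $\Pscr$-extension $\widehat{N_i}$ is itself a $\Pscr$-extension; the surrounding manipulations are purely formal. One could alternatively avoid this by invoking \cref{lemma:indecomposable-local-ring} and showing that the restriction functor induces a ring isomorphism $\End(\widehat{N_i}) \cong \End(N_i)$ for $\Pscr$-extensions (morphisms between $\Pscr$-extensions are determined by their restrictions to $\Pscr$, since the structure maps within each grid cell are isomorphisms), so locality transfers from $\End(N_i)$ to $\End(\widehat{N_i})$. Either route completes the proof.
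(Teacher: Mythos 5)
Your proof is correct and takes essentially the same approach as the paper's: reduce to a finite grid via \cref{lemma:fp-is-extension-finite-poset}, decompose there (with finiteness of the index set forced by the finiteness of the grid), and extend back via \cref{lemma:decomposition-restriction-extension}. Where you go slightly beyond the paper is in explicitly verifying that each $\widehat{N_i}$ remains indecomposable as an $\Rbf^n$-persistence module; the paper's proof leaves this step implicit, and your argument --- showing via \cref{lemma:structure-map-iso} that any direct summand of a $\Pscr$-extension is again a $\Pscr$-extension, so the decomposition restricts to a decomposition of the indecomposable $N_i$ --- correctly closes that gap. Your alternative via $\End(\widehat{N_i}) \cong \End(N_i)$ and \cref{lemma:indecomposable-local-ring} is equally valid and is closer in spirit to how similar indecomposability claims are handled elsewhere in the paper, for instance in the proof of \cref{lemma:add-1-dim-corner}.
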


\subparagraph{Interleavings, interleaving distance, and space of persistence modules}
Let $n \in \Nbb$.
If $M : \Rbf^n \to \vect$ is an $\Rbf^n$-persistence module and $r \in \Rbf$, the \define{$r$-shift} of $M$ is the persistence module $M[r] : \Rbf^n \to \vect$ with $M[r](s) \coloneqq M(s+r)$ for all $s \in \Rbf^n$ and $\phi^{M[r]}_{s,t} \coloneqq \phi^M_{s+r,t+r}$ for all $s\leq t \in \Rbf^n$.
If $r \geq 0 \in \Rbf$, then there is a natural transformation $\eta^M_r : M \to M[r]$ with component $M(a) \to M[r](a) = M(a+r)$ given by the structure morphism $\phi^M_{a,a+r}$.

Let $M,N : \Rbf^n \to \vect$ and $\epsilon \geq 0 \in \Rbb$.
An \define{$\epsilon$-interleaving} between $M$ and $N$ consists of a pair of morphisms $f : M \to N[\epsilon]$ and $g : N \to M[\epsilon]$ such that $g[\epsilon] \circ f = \eta^M_{2\epsilon}$ and $f[\epsilon] \circ f = \eta^N_{2\epsilon}$.
The \define{interleaving distance} between $M$ and $N$ is
\[
    d_I(M,N) = \inf \{\epsilon \geq 0 : \text{there exists an $\epsilon$-interleaving between $M$ and $N$}\}.
\]

By composing interleavings, one shows that $d_I$ satisfies the triangle inequality.
Using \cref{lemma:structure-map-iso}, one sees that if $M, N : \Rbf^n \to \vect$ are finitely presentable and $d_I(M,N) = 0$, then $M$ and $N$ are isomorphic \cite[Corollary~6.2]{lesnick}.
This implies that $d_I$ defines an extended metric on the set of isomorphism classes of finitely presentable $\Rbf^n$-persistence modules.
We mention here that the collection of all isomorphism classes of finitely presentable $\Rbf^n$-persistence modules is indeed a set, as opposed to a proper class, but we shall not delve into the details as they are standard and not relevant to the results presented here.

The \define{space of finitely presentable $n$-parameter persistence modules} is the topological space with underlying set the isomorphism classes of finitely presentable $\Rbf^n$-persistence modules and with topology generated by the open balls with respect to the interleaving distance.

Let $\epsilon > 0$.
A persistence module $M : \Rbf^n \to \vect$ is \define{$\epsilon$-trivial} if, for every $r \in \Rbf^n$, the structure morphism $\phi^M_{r,r+\epsilon} : M(r) \to M(r + \epsilon)$ is the zero morphism.
Note that this is equivalent to $M$ being $\frac\epsilon2$-interleaved with the zero module.
Thus, if $d_I(M,0) < \frac\epsilon2$, then $M$ is $\epsilon$-trivial.
More stringently, the module $M$ is \define{strictly $\epsilon$-trivial} if it is $\epsilon'$-trivial for some $\epsilon' < \epsilon$.
%\todo{define $r+\epsilon$. We are already doing this in the ``Posets'' paragraph}

\subparagraph{Shifts and restriction-extensions}
If $\Pscr = (\{s_i\}_{i \in \Zbb})^n \subseteq \Rbf^n$ is a countable grid and $r \in \Rbf$, define the \define{shifted} countable grid $\Pscr + r = (\{s_i + r\}_{i \in \Zbb})^n \subseteq \Rbf^n$.
Let $M : \Rbf^n \to \vect$ and $s \in \Rbf$.
If $a \in \Rbf^n$, then $\sup\{ p + r : p + r \leq a + s\} = \sup\{p + r - s : p + r - s \leq a\} + s$.
Thus,
\[
    M_{\Pscr + r}[s] = M[s]_{\Pscr + (r-s)},
\]
a fact that we use repeatedly in \cref{section:almost-indecomposable-open}.
In particular, given $N : \Rbf^n \to \vect$ and a morphism $f : M \to N[s]$, we get a morphism
\begin{equation}
    \label{equation:shift-and-restriction-extension}
    f_\Pscr : M_{\Pscr + r} \to N[s]_{\Pscr + r} = N_{\Pscr + r + s}[s].
\end{equation}

\section{Indecomposables are dense}
\label{section:indecomposables-dense}

The proof of \cref{theorem:indecomposables-dense} depends on the Tacking Lemma (\cref{lemma:main-lemma-attaching}), which lets us ``tack together'' two indecomposable modules to obtain a third indecomposable module that is at small interleaving distance from the direct sum of the initial modules.
This section is structured as follows.
In \cref{section:proof-thm-A}, we state the Tacking Lemma without proof and use it to prove \cref{theorem:indecomposables-dense}.
In \cref{section:gluing-functors}, we develop the main techniques needed to prove the Tacking Lemma.
In \cref{section:tacking-lemma}, we prove the Tacking Lemma.

\subsection{Proof of \cref{theorem:indecomposables-dense}}
\label{section:proof-thm-A}

\begin{restatable}[Tacking Lemma]{lem}{mainlemma}
    \label{lemma:main-lemma-attaching}
    Let $n \geq 2$.
    Let $A,B : \Rbf^n \to \vect$ be indecomposable, finitely presentable persistence modules, and assume that $A$ and $B$ are $\Pscr$-extensions for some regular grid $\Pscr \subseteq \Rbf^n$.
    For every $\delta > 0$ there exists a regular grid $\Qscr \supseteq \Pscr$ and a persistence module $M : \Rbf^n \to \vect$ with $M$ indecomposable, finitely presentable, a $\Qscr$-extension, and such that $d_I(M, A \oplus B) \leq \delta$.
\end{restatable}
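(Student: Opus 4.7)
The plan is to exploit the fact that $n \geq 2$ in order to ``wire together'' $A$ and $B$ inside a single cube of diameter $\delta$, producing a new module $M$ that agrees with $A \oplus B$ outside that cube, is close to $A \oplus B$ in the interleaving distance, and whose endomorphism ring is local.

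First, I would refine $\Pscr$ to a regular grid $\Qscr \supseteq \Pscr$ with mesh much smaller than $\delta$. Since $A$ and $B$ are $\Pscr$-extensions, they are also $\Qscr$-extensions, and hence so is $A \oplus B$ by \cref{lemma:decomposition-restriction-extension}. Choose a grid point $q \in \Qscr$ at which both $A(q)$ and $B(q)$ are non-zero (such a $q$ must exist since $A$ and $B$ are non-zero and finitely presentable) and such that $q + \delta\, \ebf_1$ and $q + \delta\, \ebf_2$ are grid points lying in a common $\delta$-cube.

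Next, I would construct $M$ from $A \oplus B$ by modifying two structure maps inside that $\delta$-cube: along $\ebf_1$, insert a non-zero component in the map $A \oplus B \to (A \oplus B)[\delta \ebf_1]$ that sends a chosen vector in $A(q)$ into $B(q + \delta \ebf_1)$; and symmetrically along $\ebf_2$, insert a non-zero component sending a chosen vector in $B(q)$ into $A(q + \delta \ebf_2)$. The assumption $n \geq 2$ is essential here: the two modifications occur along distinct coordinate increments, so no further commutativity relations are forced. With suitable adjustment on the affected grid points, the result remains a $\Qscr$-extension, and by \cref{lemma:fp-is-extension-finite-poset} it is finitely presentable. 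The approximation $d_I(M, A \oplus B) \leq \delta$ follows by writing down explicit morphisms $A \oplus B \to M[\delta]$ and $M \to (A \oplus B)[\delta]$ built from identity maps on the unmodified parts shifted by $\delta$; since the subsequent $\delta$-shift lands in a region where $M$ and $A \oplus B$ are canonically identified, both composites equal the respective $2\delta$-shifts, giving a $\delta$-interleaving.

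The main obstacle is indecomposability of $M$. I would apply \cref{lemma:indecomposable-local-ring} and show that $\End(M)$ is local. Any endomorphism $\phi$ of $M$ restricts, outside the gluing cube, to endomorphisms $\phi_A \in \End(A)$ and $\phi_B \in \End(B)$, using the $\Qscr$-extension property (\cref{lemma:structure-map-iso}) to propagate the restriction into the cube. The two bridges at $q$ impose the commutativity constraints $\phi_B|_{q+\delta \ebf_1} \circ (\text{bridge}_1) = (\text{bridge}_1) \circ \phi_A|_q$ and symmetrically for $\ebf_2$; since $A$ and $B$ are indecomposable, $\End(A)$ and $\End(B)$ are local rings, so each $\phi_A, \phi_B$ is either a unit or lies in the maximal ideal. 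The bridging relations rule out the mixed case, forcing $\phi_A$ and $\phi_B$ to be simultaneously invertible or simultaneously in the maximal ideal. Hence the units of $\End(M)$ are closed under subtraction of non-units, so $\End(M)$ is local. Making this analysis precise is where the ``gluing functors'' of \cref{section:gluing-functors} enter, supplying a clean categorical formalism for the modified module and for tracking its endomorphisms.
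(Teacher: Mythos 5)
Your proposal has the right intuition---you want to splice $A$ and $B$ together inside a small box---but there is a genuine gap at the crucial step: the constructed $M$ is not obviously a functor at all. Inserting a single cross-component $A(q) \to B(q + \delta\,\ebf_1)$ (and similarly for $\ebf_2$) into an otherwise diagonal module $A \oplus B$ breaks functoriality in general, because $\Rbf^n$ has many parallel paths between comparable points. For instance, take $q' = q - \delta\,\ebf_2 < q$ and $t = q + \delta\,\ebf_1$. The composite $M(q') \to M(q) \to M(t)$ picks up your new cross-component $A(q) \to B(t)$, while the composite $M(q') \to M(q' + \delta\,\ebf_1) \to M(t)$ has no cross-component if the only modified edges are those incident to $q$. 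These two composites must be equal, and generically they will not be. The phrase ``with suitable adjustment on the affected grid points'' is doing an enormous amount of hidden work: you would have to propagate the bridge across an entire quadrant, and then verify a cascade of new commutativity relations involving the arbitrary internal structure of $A$ and $B$. Nothing in the argument controls this.

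This is exactly the obstruction the paper's proof is engineered to circumvent, and it is why the proof is long. Before any tacking happens, \cref{lemma:add-1-dim-corner} first gives $A$ and $B$ a ``thin corner'' (a point where the module is one-dimensional and zero below), and \cref{lemma:add-antenna-attachment} then turns that into an ``antenna attachment'': a one-dimensional point $r$ with $A(r - k\epsilon\,\ebf_i) = 0$ and $\phi^A_{r, r+\epsilon\ebf_j} = 0$ for $j \neq i$. Those vanishing conditions are precisely what neutralize the parallel-path commutativity constraints, so that \cref{corollary:gluing-functors-rectangle} applies and the glued object is a bona fide functor. The actual splice in \cref{lemma:actual-tacking} is then done through a third copy of the special module $\Gsf$, which has its \emph{own} zero maps at $(0,3)\to(0,4)$ and $(3,0)\to(3,1)$ exactly so the two antennas can be welded to it without creating new relations. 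In short: the paper does not wire $A$ to $B$ directly; it first quarantines a ``port'' on each and connects them through a pre-built indecomposable adapter.

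The indecomposability argument also has a gap as stated. Even granting that any $\phi \in \End(M)$ restricts to $\phi_A \in \End(A)$ and $\phi_B \in \End(B)$, the claim that the bridge relations ``rule out the mixed case'' is not justified: $\phi_B$ lying in the maximal ideal of $\End(B)$ does not force $\phi_B$ to vanish (or even to be non-invertible) at the particular point $q + \delta\ebf_1$, so the pointwise equation $\phi_B|_{q+\delta\ebf_1}\circ\beta = \beta\circ\phi_A|_q$ does not obviously constrain anything. The paper sidesteps this entirely: it never computes $\End(M)$ from bridge relations. Instead, indecomposability is established via \cref{lemma:gluing-indecomposable} (a restriction argument: if $L|_V$ is indecomposable and every indecomposable summand of $L|_U$ meets $U\cap V$, then $L$ is indecomposable), applied through \cref{corollary:modify-on-rectangle}(2) at each gluing step, together with \cref{lemma:G-is-indecomposable}. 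You would need to either adopt that machinery or supply a substantially more detailed endomorphism-ring analysis.
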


We also need the following lemma, whose proof is immediate from the definition of interleaving distance.

\begin{lem}
         \label{lemma:interleaving-stabilization}
         Let $A,B,M : \Rbf^n \to \vect$. Then
$d_I(A \oplus M,B \oplus M) \leq d_I(A,B)$.
\end{lem}

\indecomposabledense*
\begin{proof}
    Let $N : \Rbf^n \to \vect$ be finitely presentable and let $\epsilon > 0$.
    We show that there exists an indecomposable, finitely presentable persistence module $M : \Rbf^n \to \vect$ such that $d_I(M,N) \leq \epsilon$.
    We start by reducing the problem to the case in which the given module is an extension of a module defined over a regular grid.

    Consider the regular grid $\Pscr = (\frac\epsilon2\Zbf)^n$ and note that $d_I(N_\Pscr,N) \leq \frac\epsilon2$.
    Let $L = N_\Pscr$.
    It is easy to see that $L$ is finitely presentable.

    If $L = 0$, then it seems natrual to take $M$ to also be $0$.
    However, since the zero module is not indecomposable, we take $M$ to be
    any strictly $\epsilon$-trivial indecomposable and finitely presentable persistence module; for example,
    the persistence module such that $M(r) = \kbb$ if $0 \leq r < \epsilon$, and $0$ otherwise, with all the structure morphisms that are not forced to be the zero morphism being the identity $\kbb \to \kbb$.
    We have $d_I(M,0) = \frac\epsilon2$, and the claim now follows from $d_I(N,M) \leq d_I(N,L) + d_I(L,M) \leq \frac\epsilon2 + d_I(0,M) \leq \epsilon$.

    We now assume that $L$ is non-zero.
    By \cref{lemma:decomposition-fp}, there exist finitely presentable indecomposables $X_1, \dots, X_k : \Rbf^n \to \vect$ such that $L \cong X_1 \oplus \dots \oplus X_k$.
    Note that, for all $1 \leq i \leq k$, the persistence module $X_i$ is a $\Qscr$-extension for any grid $\Qscr \supseteq \Pscr$.
    Let us define $M_i : \Rbf^n \to \vect$ for $1 \leq i \leq k$ inductively.
    Let $M_1 = X_1$ and, for $2 \leq i \leq k$, let $M_{i}$ be obtained by setting $A = M_{i-1}$, $B = X_i$, and $\delta = \frac{\epsilon}{2k}$ in \cref{lemma:main-lemma-attaching}.
    Thus, $M_i$ is indecomposable, finitely presentable, and satisfies
    \begin{equation}
        \label{equation:attached-distance}
        d_I(M_i, M_{i-1} \oplus X_i) \leq \frac{\epsilon}{2k}.
    \end{equation}
    Let $M = M_k$.
    We now prove that $d_I(M_i, X_1 \oplus \cdots \oplus X_i) \leq \frac{i\epsilon}{2k}$ for all $1 \leq i \leq k$, by induction on $i$.
    If $i = 1$, this is clear.
    Otherwise, we have
    \begin{align*}
        d_I(M_i, X_1 \oplus \dots \oplus X_i) &\leq
        d_I(M_i, M_{i-1} \oplus X_i) + d_I(M_{i-1} \oplus X_i, X_1 \oplus \dots \oplus X_i)\\
        &\leq
        d_I(M_i, M_{i-1} \oplus X_i) + d_I(M_{i-1}, X_1 \oplus \dots \oplus X_{i-1})\\
        &\leq
        \frac{\epsilon}{2k} + d_I(M_{i-1}, X_1 \oplus \dots \oplus X_{i-1})\\
        &\leq
        \frac{\epsilon}{2k} + (i-1)\frac{\epsilon}{2k} = \frac{i\epsilon}{2k},
        %\leq \frac\epsilon2.
        %d_I(M, X_1 \oplus \dots \oplus X_k) \leq \frac\epsilon2.
    \end{align*}
    where in the first inequality we used the triangle inequality, in the second inequality we used \cref{lemma:interleaving-stabilization} for the second summand, in the third inequality we used \cref{equation:attached-distance}, and in the last inequality we used the inductive hypothesis.

    In particular $d_I(M,L) = d_I(M_k,X_1 \oplus \cdots \oplus X_k) \leq \frac{k\epsilon}{2k} = \frac{\epsilon}{2}$, so, by the triangle inequality, we get $d_I(M,N) \leq d_I(M,L) + d_I(L,N) \leq \epsilon$, as required.
\end{proof}

\subsection{Gluing functors on subposets}
\label{section:gluing-functors}

Recall that a poset is \emph{connected} if it is connected as a category.
Explicitly, $\Pscr$ is connected if it is non-empty, and for every pair of elements $x,y \in \Pscr$ there exists a sequence of elements $p_1, \dots, p_k \in \Pscr$ with $x = p_1$, $y = p_k$ and such that for every $1 \leq i \leq k-1$ either $p_i \leq p_{i+1}$ or $p_i \geq p_{i+1}$ holds.
Although not stated in this language, the following lemma gives sufficient conditions for a poset to be a pushout (in the category of small categories) of a pair of its subsets.
In the lemma, all subsets of a poset are understood as full subposets.

\begin{lem}
    \label{lemma:pushout-subposets}
    Let $\Ccal$ be a category, let $\Pscr$ be a poset, and let $U, V \subseteq \Pscr$ with $U \cup V = \Pscr$, such that:
    \begin{itemize}
        \item for every $U \ni u \leq v \in V$, the set $U \cap V \cap \{c \in \Pscr : u \leq c \leq v\}$ is connected;
        \item for every $V \ni v \leq u \in U$, the set $U \cap V \cap \{c \in \Pscr : v \leq c \leq u\}$ is connected.
    \end{itemize}
    Then, given any pair of functors $M : U \to \Ccal$ and $N : V \to \Ccal$ with $M|_{U \cap V} = N|_{U \cap V}$, there exists a unique functor $L : \Pscr \to \Ccal$ such that $L|_U = M$ and $L|_V = N$.
\end{lem}
\begin{proof}
    %Let $\Pscr$, $U$, $V$, $M$, and $N$ be as in the statement.
    We define a functor $L$ on objects in the only possible way, that is, we let $L(x) = M(x)$ if $x \in U$, and $L(x) = N(x)$ if $x \in V$.
    To define the structure morphisms of $L$, we prove that there is only one possible choice.
    We state this formally as Claim 3, below.
    In order to state and prove this claim, we need a few definitions and intermediate results.
    \medskip

    \noindent\emph{Definitions.}
    An \emph{admissible pair} consists of an ordered pair of elements $(x, y)$ of $\Pscr$ such that $x \leq y$, and either $x,y \in U$ or $x,y \in V$.
    Given $x,y \in \Pscr$, an \emph{admissible path} between $x$ and $y$ consists of a finite ordered sequence $p = (p_1, \dots, p_k)$ of elements of $\Pscr$ such that $k\geq 2$, $x = p_1$, $y = p_k$, and for every $1 \leq i \leq k-1$ the pair $(p_i,p_{i+1})$ is admissible.
    In particular, an admissible pair is also an admissible path.
    Any admissible pair $(x,y) \in \Pscr$ induces a morphism $L(x,y) : L(x) \to L(y)$ defined as $\phi^M_{x,y}$ if $x,y \in U$ and as $\phi^N_{x,y}$ if $x,y \in V$.
    Then, any admissible path $p$ between $x$ and $y$ induces a morphism $L(p) : L(x) \to L(y)$, defined inductively as follows:
    \begin{itemize}
        \item If $p$ has length $2$, use $L(p)$ as defined above.
        \item If $p$ has length $3$ or more, let $L(p) = L(p_2, \dots, p_k) \circ L(p_1,p_2)$.
    \end{itemize}
    Two admissible paths $p$ and $q$ between $x$ and $y$ \emph{agree} if $L(p) = L(q)$ as morphisms $L(x) \to L(y)$.
    A standard argument by induction on the length of $p$ shows that, if $p = (p_1, \dots, p_k)$ is an admissible path and $2 \leq i \leq k-1$, then
    \begin{equation}
        \label{equation:associativity-concatenation}
        L(p) = L(p_i, \dots, p_k) \circ L(p_1, \dots, p_i). 
    \end{equation}

    Consider the function $f : \Pscr \to \{\ubf, \vbf, \ibf\}$ mapping the elements of $U\setminus V$ to $\ubf$, those of $U \setminus V$ to $\vbf$, and those of $U \cap V$ to $\ibf$.
    If $p = (p_1, \dots, p_k)$ is an admissible path, we let $f(p) = (f(p_1), \dots, f(p_k)) \subseteq \{\ubf, \vbf, \ibf\}$, as an ordered sequence.
    An ordered triple $(x, y, z) \subseteq \Pscr$ is \emph{reduced} if it is an admissible path and $(f(x), f(y), f(z))$ is equal, as an ordered sequence, to either $(\ubf, \ibf, \vbf)$ or to $(\vbf, \ibf, \ubf)$.
    An admissible path $p$ is \emph{reduced} if all of its consecutive triples are reduced.

    \medskip

    \noindent \emph{Claim 1. An admissible path $p$ is reduced if and only if it has length $2$ or it is a reduced triple.}

    It is sufficient to prove that $p$ cannot have length more than $3$.
    Consider $f(p)$ as a sequence of elements of $\{\ubf, \vbf, \ibf\}$.
    Since $p$ is reduced, a symbol $\ubf$ cannot follow or come before a symbol $\vbf$ in $f(p)$.
    This implies that either all elements of $f(p)$ in even positions are equal to $\ibf$, or all elements of $f(p)$ in odd positions are equal to $\ibf$.
    In either of these cases, $p$ cannot have length more than $3$ since it would otherwise contain a consecutive triple $(x,y,z)$ with $f(x) = f(z) = \ibf$, which would not be reduced.

    \medskip

    \noindent \emph{Claim 2. If an admissible path $(x,y,z)$ is not reduced as an ordered triple, then the pair $(x,z)$ is admissible.
    In that case, $L(x,y,z) = L(x,z)$, that is $(x,y,z)$ and $(x,z)$ agree.}

    Since $(x,y,z)$ is admissible, in $f(x,y,z)$ a symbol $\ubf$ cannot follow or come before a symbol $\vbf$ so either $f(y) = \ibf$ or $f(x) = f(z) = \ibf$.
    If $f(y) = \ibf$ and $(x,y,z)$ is not admissible, then $f(x) = f(z)$ so that $(x,z)$ is admissible and $L(x,y,z) = L(y,z) \circ L(x,y) = L(x,z)$, by \cref{equation:associativity-concatenation}, since in this case $x$, $y$, and $z$ are either all in $U$ or all in $V$.
    If $f(x) = f(z) = \ibf$, then $(x,z)$ is also admissible and 
    $L(x,y,z) = L(y,z) \circ L(x,y) = L(x,z)$, by \cref{equation:associativity-concatenation}, since also in this case $x$, $y$, and $z$ are either all in $U$ or all in $V$.
    \medskip
    
    \noindent \emph{Claim 3. 
    Let $x \leq y \in \Pscr$.
    For every admissible path $p$ between $x$ and $y$ there exists a reduced path $\overline{p}$ between $x$ and $y$ such that $p$ and $\overline{p}$ agree.}
    
    If $p$ is reduced, let $\overline{p} = p$.
    It is clear that we have $L(p) = L(\overline{p})$.
    If $p$ is not reduced, then we proceed inductively in the length of $p$, as follows.

    If $p = (p_1, \dots, p_k)$ is not reduced, let $1 \leq i \leq k-2$ be the smallest index such that $(p_i, p_{i+1}, p_{i+2})$ is not a reduced triple, and let $\overline{p} = \overline{p'}$, where
    \[
         p' = (p_j)_{1 \leq j \leq i} \; (p_j)_{i+2 \leq j \leq k}\, ,
    \]
    where juxtaposition of paths denotes concatenation of sequences.
    In other words, $p'$ is given by removing the $(i+1)$st component (that is $p_{i+1}$) from $p$.
    The path $p'$ is admissible since, if $(p_i, p_{i+1}, p_{i+2})$ is not reduced, then $(p_1, p_{i+2})$ is admissible by Claim 2.
    Moreover, we have
    \begin{align*}
        L(p') &= L(p_j)_{i+2 \leq j \leq k} \circ L(p_i, p_{i+2}) \circ L(p_j)_{1 \leq j \leq i}\\
            &= L(p_j)_{i+2 \leq j \leq k} \circ L(p_i, p_{i+1}, p_{i+2}) \circ L(p_j)_{1 \leq j \leq i}\\
            &= 
            L(p),
    \end{align*}
    using \cref{equation:associativity-concatenation} for the first and third equality, and Claim 2 for the second one.
    Finally, $L(p) = L(p') = L(\overline{p'}) = L(\overline{p})$, where the second equality holds by inductive hypothesis and the third equality holds by definition.

    \medskip

    \noindent \emph{Claim 4.
    Let $x \leq y \in \Pscr$.
    There exists at least one admissible path between $x$ and $y$, and all admissible paths between $x$ and $y$ agree.}

    By Claim 3, it is enough to consider reduced paths.
    If $(x,y)$ is an admissible pair, then there is only one reduced path between them.
    So let us assume that $(x,y)$ is not an admissible pair, and without loss of generality, we may assume that $x \in U \setminus V$ and $y \in V \setminus U$.
    Then, by Claim 1, the reduced paths between $x$ and $y$ are of the form $(x,c,z)$ with $c \in U \cap V$.
    We say that two reduced paths $(x,c,z)$ and $(x,c',z)$ are \emph{contiguous} if either $c \leq c'$ or $c' \leq c$.
    We now show that any pair of such contiguous reduced paths agree.
    Without loss of generality, assume $c \leq c'$, so that we have
    \[
        L(x,c,y) = L(c,y) \circ L(x,c) = L(c',y) \circ L(c,c') \circ L(x,c) = L(c',y) \circ L(x,c'),
    \]
    since all commutativity relations used occur in either $U$ or in $V$.
    To see that all reduced paths between $x$ and $y$ agree, and that there exists at least one such reduced path, it is thus sufficient to show that the transitive closure of the contiguity relation between reduced paths between $x$ and $y$ has exactly one equivalence class, which follows directly from the assumption that the set $U \cap V \cap \{c \in \Pscr : u \leq c \leq v\}$ is connected.

    \medskip

    We now use Claim 4 to conclude the proof of the result.
    Given $x \leq y \in \Pscr$, we define $\phi^L_{x,y} = L(p)$, where $p$ is any admissible path from $x$ to $y$.
    This is uniquely determined by Claim 4.
    By construction $\phi^L_{x,x}$ is the identity of $L(x)$, since $(x,x)$ is a reduced path.
    To see that $\phi^L_{y,z} \circ \phi^L_{x,y} = \phi^L_{x,z}$, note that the concatenation of an admissible path from $x$ to $y$ with an admissible path from $y$ to $z$ gives an admissible path from $x$ to $z$, so the equality follows from the fact that all admissible paths between $x$ and $z$ agree.
    It is clear that $L$ is the unique functor such that $L|_U = M$ and $L|_V = N$.
%    To define $L$ on morphisms we proceed as follows.
%    Let $x \leq y \in \Pscr$.
%    If either $x,y \in A$ (respectively $x,y \in B$), we use the structure morphism $M(x) \to M(y)$ (respectively $N(x) \to N(y)$).
%    We thus consider the case where $x \in A$ and $b \in B$; the other case being analogous.
%    Since $A \cap B \cap \Pscr_{\leq y} \cap \Pscr_{\geq x}$ is connected, it is in particular non-empty and thus there exists some $c \in A\cap B$ with $x \leq c \leq y$.
%    We then define $L(x) \to L(y)$ as the composite of $M(x) \to M(c) = N(c) \to N(y)$.
%    It is not yet clear that this functor is well-defined, but if it is, then it is clearly unique.
%
%    To conclude, we must show that $L$ is a functor, that is, that given $x,y,z \in \Pscr$ the composite of our chosen morphism $L(x) \to L(y)$ with $L(y) \to L(z)$ is equal to the morphism $L(x) \to L(z)$.
%    If $x,y,z \in A$ (respectively $x,y,z \in B$), functoriality follows from that of $M$ (respectively $N$).
%    By symmetry, there are only two cases:
%    \begin{enumerate}
%        \item $x,y \in A$ and $z \in B$;
%        \item $x,z \in A$ and $y \in B$.
%    \end{enumerate}
%
%    In the first case 
\end{proof}

\begin{lem}
    \label{lemma:gluing-indecomposable}
    Let $\Pscr$ be a poset, and let $U,V \subseteq \Pscr$ be such that $U \cup V = \Pscr$ and $U \cap V \neq \emptyset$.
    Assume given a persistence module $L : \Pscr \to \vect$ such that $L|_V$ is indecomposable, and such that $L|_U$ admits a decomposition $\bigoplus_{i \in I} X_i$ into indecomposables with $X_i|_{U \cap V} \neq 0$ for every $i \in I$.
    Then $L$ is indecomposable.
\end{lem}

For a related result, see \cite[Theorem~10.2]{chacholski-giunti-landi-tombari}.
%\luis{email barbara about this lemma}
\begin{proof}
    Since $L|_V$ is indecomposable, it is non-zero, and thus $L$ is non-zero.
    Now, assume that $L \cong S \oplus T$.
    On one hand, we have $L|_V \cong S|_V \oplus T|_V$.
    Since $L|_V$ is indecomposable, without loss of generality, it must be the case that $T|_V = 0$.
    In particular, $T|_{U\cap V} = 0$.
    On the other hand, $L|_U \cong S|_U \oplus T|_U$ so there exist disjoint subsets $J,J' \subseteq I$ with $I = J \cup J'$, $S|_U \cong \bigoplus_{i \in J} X_i$, and $T|_U \cong \bigoplus_{i \in J'} X_i$.
    Since all the indecomposable summands $X_i$ of $L|_U$ satisfy $X_i|_{U \cap V} \neq 0$ and $T|_{U \cap V} = 0$, we must have $J' = \emptyset$ and $T|_U = 0$.
    Thus, $T = 0$ and $L$ does not admit any non-trivial decompositions.
\end{proof}

\begin{dfn}
Let $\delta > 0$ and $n \geq 1 \in \Nbb$.
A \emph{hyper-rectangle} of $(\delta\Zbf)^n$ is any set of the form $S=[a_1, b_1] \times \cdots \times [a_n, b_n] \subseteq \delta\Zbf^n$, with $a_i \leq b_i \in \delta\Zbf \cup \{- \infty, +\infty\}$.
Given this, let
    \begin{align*}
        S^\uparrow &= [a_1, b_1 + \delta] \times \cdots \times [a_n, b_n + \delta] \\
        S^\downarrow &= [a_1-\delta, b_1] \times \cdots \times [a_n-\delta, b_n]\\
        \partial^\uparrow S &= S^\uparrow \setminus S\\
        \partial^\downarrow S &= S^\downarrow \setminus S\\
        \partial S &= \partial^\uparrow S \cup \partial^\downarrow S\\
        \overline{S} &= S \cup \partial S.
    \end{align*}
\end{dfn}
For an example, see \cref{figure:rectangle}.

\begin{figure}[h]
    \centering
    \includegraphics[width=0.5\linewidth]{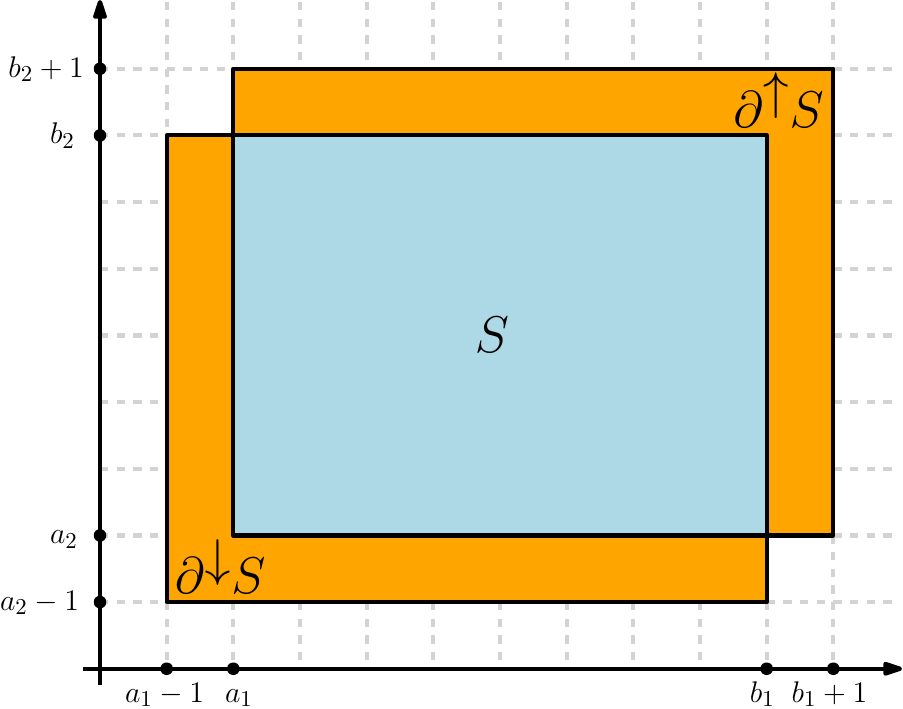}
    \caption{A hyper-rectangle $S = [a_1,b_1] \times [a_2, b_2] \subseteq \Zbf^2$.}
    \label{figure:rectangle}
\end{figure}

\begin{cor}
    \label{corollary:gluing-functors-rectangle}
    Let $\delta > 0$, let $n \geq 1 \in \Nbb$, let $S = [a_1, b_1] \times \cdots \times [a_n, b_n] \subseteq (\delta\Zbf)^n$ be a hyper-rectangle, and let $\Ccal$ be a category.
    Denote $U = \overline{S}$ and $V = (\delta\Zbf)^n \setminus S$.
    Given any pair of functors $M : U \to \Ccal$ and $N : V \to \Ccal$ with $M|_{U \cap V} = N|_{U \cap V}$, there exists a unique functor $L : (\delta \Zbf)^n \to \Ccal$ such that $L|_U = M$ and $L|_V = N$.
\end{cor}
\begin{proof}
    Without loss of generality, we may assume that $\delta = 1$.
    We use \cref{lemma:pushout-subposets}, so we need to prove that, given any hyper-rectangle $S$ as in the statement, we have that the set $U \cap V \cap \{c \in \Pscr : u \leq c \leq v\}$ is connected for every $U \ni u \leq v \in V$, and that the set $U \cap V \cap \{c \in \Pscr : v \leq c \leq u\}$ is connected for every $V \ni v \leq u \in U$.
    It is sufficient to prove the case $u \leq v$, since the other case is dual: to see this, consider the isomorphism between $\Zbf^n$ endowed with the opposite order and $\Zbf^n$ endowed with the standard order that maps $x$ to $-x$, and note that this isomorphism induces a bijection between the set of hyper-rectangles of $\Zbf^n$ and itself, so we can use this duality to reduce the case $v \leq u$ to the case $u \leq v$.

    So let $U \ni u \leq v \in V$.
    We will need the following claim, which follows directly from the fact that $U \cap V = \partial S$ and thus $\{c \in \Zbf^n : c \geq u\} \cap U \cap V = \{c \in \Zbf^n : c \geq u\} \cap \partial S$.

    \medskip
    \noindent \emph{Claim a. For every $c \geq u$, we have that $c \in U \cap V$ if and only if
    \begin{itemize}
        \item for every $1 \leq j \leq n$, $c_j \leq b_j + 1$,
        \item and there exists $1 \leq i \leq n$ such that $c_i = b_i + 1$.
    \end{itemize}
    }
    \medskip

    We start by proving that $\Xcal = U \cap V \cap \{c \in \Pscr : u \leq c \leq v\}$ is non-empty.
    If $v \in U \cap V$ or $u \in U \cap V$, this is clear, so assume that $v \in V \setminus U = \Zbf^n \setminus \overline{S}$ and $u \in U \setminus V = S$.
    Then, there must exist $1 \leq i \leq n$ such that $v_i > b_i + 1$. 
    Define $c \coloneqq u + (b_i + 1 - u_i)\ebf_i$, where, in words, what we are doing is to move closer to $v$, starting from $u$, and in the direction of the $i$th component, just enough to land in the boundary of $S$.
    It is clear that $u \leq c \leq v$, $c_i = b_i + 1$, and $c_j \leq b_j + 1$ for all $1 \leq j \leq n$.
    Thus, $c \in \Xcal$, by Claim \emph{a}.

    To conclude, we prove that $\Xcal$ is connected.
    Let $x,y \in \Xcal$.
    Then,
    \[
        x\vee y = \left(\max(x_1, y_1), \dots, \max(x_n, y_n)\right) \in \Xcal,
    \]
    since $u \leq x\vee y \leq v$, and $x\vee y \in \Xcal$ by Claim \emph{a}.
    Since $x \leq x \vee y \geq y$ and $\Xcal$ is non-empty, it follows that $\Xcal$ is also connected, as required.
\end{proof}

The next result lets us modify a module over a grid locally, only in a hyper-rectangle, and gives sufficient conditions for concluding that the modified module is indecomposable.

\begin{cor}
    \label{corollary:modify-on-rectangle}
    Let $\delta > 0$, let $n \geq 1 \in \Nbb$, let $S = [a_1, b_1] \times \cdots \times [a_n, b_n] \subseteq (\delta\Zbf)^n$ be a hyper-rectangle, and let $M : (\delta \Zbf)^n \to \vect$.
    Assume given $U \subseteq (\delta \Zbf)^n$ and $N : U \to \vect$ such that $\overline{S} \subseteq U$ and $M|_{U \setminus S} = N|_{U \setminus S}$.
    Denote $V = (\delta \Zbf)^n \setminus S$.
    \begin{enumerate}
        \item There exists a unique functor $L : (\delta \Zbf)^n \to \vect$ such that $L|_S = N|_S$ and $L|_V = M|_V$, which is finitely presentable if both $M$ and $N$ are.
        \item If $M|_V$ is indecomposable, and every indecomposable summand of $N$ is non-zero when restricted to $U \setminus S$, then $L$ is indecomposable.
    \end{enumerate}
\end{cor}
%\luis{replace $U$ with $\overline{S}$?}
\begin{proof}
    We start by proving the first statement.
    Since $M|_{U \setminus S} = N|_{U \setminus S}$ and $\overline{S} \subseteq U$, it is enough to assume that $U = \overline{S}$.
    Then, the existence and uniqueness of $L$ follows directly from \cref{corollary:gluing-functors-rectangle}.
    If $M$ and $N$ are finitely presentable, then the module $L$ is also finitely presentable by \cref{lemma:fp-is-extension-finite-poset}.

    We now prove the second statement and do not assume that $U = \overline{S}$.
    If $M|_V$ is indecomposable and every indecomposable summand of $N$ is non-zero when restricted to $U \setminus S$, then $L|_V$ is indecomposable and every summand of $L|_U$ is non-zero when restricted to $U \cap V$, so \cref{lemma:gluing-indecomposable} implies that $L$ is indecomposable, as required.
\end{proof}

\subsection{Proof of the Tacking Lemma}
\label{section:tacking-lemma}

%In the rest of this section, we prove \cref{lemma:main-lemma-attaching} (the Tacking Lemma).
We first outline the proof strategy.
\cref{figure:diagram-proof-main-lemma} shows a diagrammatic description of the main steps in the two-parameter case (i.e.,~$n=2$).
%\begin{proof}[Proof outline for the Tacking Lemma]\renewcommand{\qedsymbol}{}
We start by defining a persistence module $\Gsf$ on a finite grid which allows us to ``tack together'' indecomposable modules.
Now given modules $A$ and $B$ as in the statement, the tacking works in three steps.
First, we use \cref{lemma:add-1-dim-corner} to replace $A$ and $B$ by modules which have a ``thin corner'' (\cref{definition:1-dim-corner}); the purpose of this thin corner is that it allows us to attach a copy of $\Gsf$ to each of the modules, which we do in the next step.
Second, we use \cref{lemma:add-antenna-attachment} to replace the modules by modules that have an ``antenna attachment'' (\cref{definition:horizontal-antenna-attachment}) by attaching a copy of $\Gsf$ to each of the modules; the purpose of this antenna attachment is that it allows us to tack together the two modules with a third copy of $\Gsf$. 
Finally, using \cref{lemma:moving-attachment,lemma:actual-tacking}, we construct $M$ by tacking together the two modules.
%These constructions are done in such a way that $\im\left(\eta^A_\delta\right) \cong \im\left(\eta^{A_1}_\delta\right) \cong \im\left(\eta^{A_2}_\delta\right)$, 
%$\im\left(\eta^B_\delta\right) \cong \im\left(\eta^{B_1}_\delta\right) \cong \im\left(\eta^{B_2}_\delta\right)$, and
%$\im\left(\eta^M_\delta\right) \cong \im\left(\eta^{A_2 \oplus B_2}_\delta\right)$.
%\end{proof}

\begin{figure}
    \centering
    \includegraphics[width=0.75\linewidth]{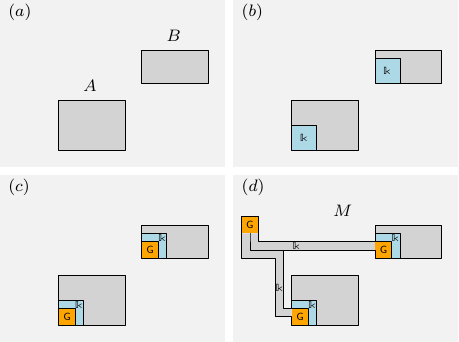}
    \caption{A schematic summary of the main steps in the proof of the tacking lemma.
    The transition from $(a)$ to $(b)$ corresponds to \cref{lemma:add-1-dim-corner}; the transition from $(b)$ to $(c)$ corresponds to \cref{lemma:add-antenna-attachment}; the transition from $(c)$ to $(d)$ corresponds to \cref{lemma:moving-attachment,lemma:actual-tacking}.}
    \label{figure:diagram-proof-main-lemma}
\end{figure}

In each of the steps in the proof of the tacking lemma, we modify modules in sufficiently small regions to guarantee that the modified modules are not too far from the original modules in the interleaving distance.
To make this formal, we use the following definitions and lemmas.

\begin{dfn}
    \label{definition:trivial-set}
    Let $\epsilon > 0 \in \Rbf$ and let $U \subseteq \Rbf^n$ be a set.
    We say that $U$ is \emph{$\epsilon$-trivial} if $U \cap (U+\epsilon) =\emptyset$, where $U + \epsilon = \{u + \epsilon : u \in U\}$.
\end{dfn}

The next lemma gives a simple way of checking that certain sets are trivial in the sense of \cref{definition:trivial-set}; its proof is straightforward.

\begin{lem}
    \label{lemma:extension-set-is-trivial}
    Let $k \in \Nbb$, $\epsilon > 0 \in \Rbf$, and $U \subseteq (\epsilon \Zbf)^n$.
    If $U \cap (U+k\epsilon) =\emptyset$, then, the extension $\widehat{U} \subseteq \Rbf^n$ of $U$ to $\Rbf^n$, defined as 
    \[
%        \pushQED{\qed} 
        \widehat{U} \coloneqq \{u + r : u \in U, r \in [0,\epsilon)^n\} \subseteq \Rbf^n,
%        \popQED
    \]
    is $k\epsilon$-trivial.
\end{lem}

\begin{dfn}
	Let $A , B \colon \Rbf^n \to \vect$, and let $U \subseteq \Rbf^n$.
    We say that $A$ and $B$ \emph{agree outside $U$} if 
    $A|_{U^c} \cong B|_{U^c}$, where $U^c \subseteq \Rbf^n$ is the complement of $U$ seen as a full subposet of $\Rbf^n$.
\end{dfn}

\begin{lem}
    \label{lemma:local-changes}
	Let $A , B \colon \Rbf^n \to \vect$, let $\epsilon > 0$, and let $U \subseteq \Rbf^n$ be $\epsilon$-trivial.
    If $A$ and $B$ agree outside $U$, then they are $\epsilon$-interleaved.
    %Let $U^c \subseteq \Rbf^n$ denote the full subposet spanned by the elements in the complement of $U$.
    %If $A|_{U^c} \cong B|_{U^c}$, then $A$ and $B$ are $\epsilon$-interleaved.
\end{lem}
\begin{proof}
Without loss of generality, we may assume that $A|_{U^c} = B|_{U^c}$.
An interleaving morphism $f \colon A \to B[\epsilon]$ is canonically given by 
\begin{align*}
f_{t} &= 
\begin{dcases}
\phi^A_{t,t+\epsilon} \colon A(t) \to B({t+\epsilon}) = A({t+\epsilon}) & t \in U , \\
\phi^B_{t,t+\epsilon} \colon A(t) = B(t) \to B({t+\epsilon}) & t \notin U . \\
\end{dcases} 
\end{align*}
An interleaving inverse $g \colon B \to A[\epsilon]$ is given similarly as
\begin{align*}
g_{t} &= 
\begin{dcases}
\phi^B_{t,t+\epsilon} \colon B(t) \to A({t+\epsilon}) = B({t+\epsilon}) & t \in U  , \\
\phi^A_{t,t+\epsilon} \colon B(t) = A(t) \to A({t+\epsilon}) & t \notin U . \\
\end{dcases} 
\end{align*}
It follows immediately from the definition that $g[\epsilon] \circ f = \eta^A_{2\epsilon}$ and $f[\epsilon] \circ g = \eta^B_{2\epsilon}$, proving the claim.
\end{proof}

We now define the module $\Gsf$, which is the main building block for tacking modules together.
    Let $\Pscr = \{0, 1, 2, 3, 4\}^2$ and define $\Gsf : \Pscr \to \vect$ as follows:
    \[ \footnotesize
        \begin{tikzpicture}
            \matrix (m) [matrix of math nodes,row sep=3em,column sep=3em,minimum width=2em,nodes={text height=1.75ex,text depth=0.25ex}]
            {  \kbb & \kbb   & \kbb   & \kbb   & \kbb \\
               \kbb & \kbb^2 & \kbb^2 & \kbb^2 & \kbb \\
               0    & \kbb   & \kbb^2 & \kbb^2 & \kbb \\
               0    & 0      & \kbb   & \kbb^2 & \kbb \\
               0    & 0      & 0      & \kbb   & \kbb \\};
            \path[line width=0.5pt, -{>[width=6pt]}]
            (m-1-1) edge [-,double equal sign distance] (m-1-2)
            (m-1-2) edge [-,double equal sign distance] (m-1-3)
            (m-1-3) edge [-,double equal sign distance] (m-1-4)
            (m-1-4) edge [-,double equal sign distance] (m-1-5)

            (m-2-2) edge [-,double equal sign distance] (m-2-3)
            (m-2-3) edge [-,double equal sign distance] (m-2-4)

            (m-3-3) edge [-,double equal sign distance] (m-3-4)

            (m-5-5) edge [-,double equal sign distance] (m-4-5)
            (m-4-5) edge [-,double equal sign distance] (m-3-5)
            (m-3-5) edge [-,double equal sign distance] (m-2-5)
            (m-2-5) edge [-,double equal sign distance] (m-1-5)

            (m-4-4) edge [-,double equal sign distance] (m-3-4)
            (m-3-4) edge [-,double equal sign distance] (m-2-4)

            (m-3-3) edge [-,double equal sign distance] (m-2-3)

            (m-2-1) edge node [left] {\footnotesize $0$} (m-1-1)
            (m-2-1) edge node [above] {\footnotesize $\begin{pmatrix} 1\\ 1\end{pmatrix}$} (m-2-2)
            (m-2-2) edge node [right] {\footnotesize $(1,-1)$} (m-1-2)
            (m-2-3) edge node [right] {\footnotesize $(1,-1)$} (m-1-3)
            (m-2-4) edge node [right] {\footnotesize $(1,-1)$} (m-1-4)

            (m-5-4) edge node [above] {\footnotesize $0$} (m-5-5)
            (m-5-4) edge node [left] {\footnotesize $\begin{pmatrix} 1\\ 1\end{pmatrix}$} (m-4-4)
            (m-4-4) edge node [above] {\footnotesize $(1,-1)$} (m-4-5)
            (m-3-4) edge node [above] {\footnotesize $(1,-1)$} (m-3-5)
            (m-2-4) edge node [above] {\footnotesize $(1,-1)$} (m-2-5)

            (m-3-2) edge node [left] {\footnotesize $\begin{pmatrix} 1\\ 0\end{pmatrix}$} (m-2-2)
            (m-3-2) edge node [above] {\footnotesize $\begin{pmatrix} 1\\ 0\end{pmatrix}$} (m-3-3)

            (m-4-3) edge node [left] {\footnotesize $\begin{pmatrix} 0\\ 1\end{pmatrix}$} (m-3-3)
            (m-4-3) edge node [above] {\footnotesize $\begin{pmatrix} 0\\ 1\end{pmatrix}$} (m-4-4)

            (m-3-1) edge (m-2-1)
            (m-4-1) edge (m-3-1)
            (m-5-1) edge (m-4-1)

            (m-4-2) edge (m-3-2)
            (m-5-2) edge (m-4-2)

            (m-5-3) edge (m-4-3)

            (m-5-1) edge (m-5-2)
            (m-5-2) edge (m-5-3)
            (m-5-3) edge (m-5-4)

            (m-4-1) edge (m-4-2)
            (m-4-2) edge (m-4-3)
        
            (m-3-1) edge (m-3-2)
            ;
        \end{tikzpicture}
    \]

\begin{restatable}{lem}{Gindecomposable}
    \label{lemma:G-is-indecomposable}
    The persistence module $\Gsf$ is indecomposable.
    %Moreover, the morphism of rings $\End(\Gsf) \to \End(\Gsf(4,4)) \cong \kbb$ given by evaluation at $(4,4)$ is an isomorphism.
\end{restatable}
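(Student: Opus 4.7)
The plan is to invoke \cref{lemma:indecomposable-local-ring}, which reduces the indecomposability of $\Gsf$ to showing that $\End(\Gsf)$ is a local ring. I will prove the stronger statement that $\End(\Gsf) \cong \kbb$, which is a field and therefore local. To this end, I take an arbitrary $f \in \End(\Gsf)$, denote its components by $f_{x,y} \colon \Gsf(x,y) \to \Gsf(x,y)$, and show that all of them are multiplication by a single scalar $\lambda \in \kbb$ by propagating naturality constraints through the diagram in \cref{def:G}.

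First, I exploit the ``rigid skeleton'' of $\Gsf$ formed by identity structure maps. The top row $y=4$ and the right column $x=4$ are each one-dimensional and connected by identities, and they share the corner $(4,4)$; naturality therefore forces $f_{x,y}$ to be a single scalar $\lambda \in \kbb$ at every position on this L-shape. Next, the three $\kbb^2$'s along the band $y=3$ (at $x=1,2,3$), the three $\kbb^2$'s along the band $x=3$ (at $y=1,2,3$), and the central $\kbb^2$ at $(2,2)$ are all mutually connected by identity maps (in particular $(2,2)\to(2,3)$ and $(2,2)\to(3,2)$ are identities). So there is a single matrix $A \in \mathrm{Mat}_{2\times 2}(\kbb)$ representing $f$ at every $\kbb^2$-position.

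The heart of the argument is showing $A = \lambda I$. Naturality with respect to the collapse maps $(1,-1) \colon \kbb^2 \to \kbb$ exiting these bands (e.g.\ $(1,3) \to (1,4)$ and $(3,1) \to (4,1)$, where the target already carries scalar $\lambda$) forces $(1,-1)$ to be a left eigenvector of $A$ with eigenvalue $\lambda$, i.e.\ $A_{21} = A_{11} - \lambda$ and $A_{22} = A_{12} + \lambda$. Naturality for the staircase inclusions $(1,0)^T$ emanating from $(1,2)$ and $(0,1)^T$ emanating from $(2,1)$ gives $A \cdot (1,0)^T = f_{1,2} \cdot (1,0)^T$ and $A \cdot (0,1)^T = f_{2,1} \cdot (0,1)^T$, hence $A_{12} = A_{21} = 0$ and $f_{1,2} = A_{11}$, $f_{2,1} = A_{22}$. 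Combined with the eigenvector relation, this yields $A_{11} = A_{22} = \lambda$, so $A = \lambda I$ and $f_{1,2} = f_{2,1} = \lambda$. The last one-dimensional positions $(0,3)$ and $(3,0)$ are pinned down by the maps $(1,1)^T$ into the neighbouring $\kbb^2$: naturality gives $A \cdot (1,1)^T = f_{0,3} \cdot (1,1)^T$, which now reads $\lambda(1,1)^T = f_{0,3}(1,1)^T$, so $f_{0,3} = \lambda$, and symmetrically $f_{3,0} = \lambda$. The two zero-valued structure maps $(0,3)\to(0,4)$ and $(3,0)\to(4,0)$ impose no further constraints.

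The main obstacle is just careful bookkeeping: the diagram has many naturality squares, and I must ensure that the chosen source/target pairs are consistent. Conceptually, however, the structure of the argument is clear: the identity maps rigidify $f$ on the connected skeletons of $\kbb$'s and $\kbb^2$'s, while the twisted maps $(1,1)^T$, $(1,-1)$, $(1,0)^T$, $(0,1)^T$ glue the would-be independent eigen-directions together so that the only surviving endomorphisms are scalars. Hence $\End(\Gsf) = \kbb$, and \cref{lemma:indecomposable-local-ring} yields indecomposability of $\Gsf$.
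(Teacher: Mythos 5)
Your proof is correct and follows essentially the same approach as the paper: both reduce indecomposability to showing $\End(\Gsf)\cong\kbb$ via \cref{lemma:indecomposable-local-ring}, and both verify this by propagating naturality constraints through the diagram, using the identity-connected skeleton to rigidify the scalars and the ``twisted'' maps $(1,1)^T$, $(1,-1)$, $(1,0)^T$, $(0,1)^T$ to force the $2\times 2$ block to be a scalar matrix. The only difference is bookkeeping: the paper first deduces $f_{1,2}=f_{2,1}=\alpha$ directly from the composite structure morphisms $\Gsf(1,2)\to\Gsf(4,4)$ and $\Gsf(2,1)\to\Gsf(4,4)$ being identities and then concludes $A=\alpha I$ from the staircase maps, whereas you solve the staircase and left-eigenvector constraints for $A$ first and read off $f_{1,2},f_{2,1}$ afterward.
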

\begin{proof}
	% for $a \in \{(0,3), (3,0), (4,4)\}$, the morphism of rings $\End(\Gsf) \to \End(\Gsf(a)) \cong \kbb$ given by evaluation at $a$ is an isomorphism.
    Let $f : \Gsf \to \Gsf$ be an endomorphism.
    If $f(4,4) : \kbb \to \kbb$ is given by multiplication by $\alpha$, then, for all $0 \leq j \leq 4$, we have that $f(4,j) : \kbb \to \kbb$ and $f(j,4) : \kbb \to \kbb$ must also be given by multiplication by $\alpha$.

    Assume that $f(1,2) : \kbb \to \kbb$ is given by multiplication by $\beta$ and that $f(2,1) : \kbb \to \kbb$ is given by multiplication by $\gamma$.
    Since the structure morphisms $\kbb = \Gsf(1,2) \to \Gsf(4,4) = \kbb$ is the identity, we must have $\beta = \alpha$.
    By an analogous argument, we have $\gamma = \alpha$.
    This also implies that for all $(j,k)$ with $\Gsf(j,k) = \kbb^2$ the endomorphism $f(j,k)$ is given by multiplication by $\alpha$.
    %has the matrix $\begin{psmallmatrix}\alpha&0\\0&-\alpha\end{psmallmatrix}$.
    
    To conclude, note that, by the definition of the structure morphisms $\Gsf(0,3) \to \Gsf(1,3)$ and $\Gsf(3,0) \to \Gsf(3,1)$, we have that $f(0,3)$ and $f(3,0)$ are also given by multiplication by $\alpha$.
    Thus, $\End(\Gsf) \cong \End(\Gsf(4,4)) \cong \kbb$, and thus $\Gsf$ is indecomposable, by \cref{lemma:indecomposable-local-ring}.
\end{proof}

The crucial property of $\Gsf$ of importance to us, other than it being indecomposable, is the fact that there are two pairs of adjacent copies of the ground field $\kbb$ connected by a zero map, on the bottom right at $(0,3)$ and $(0,4)$, and on the top left at $(3,0)$ and $(4,0)$.

\begin{dfn}
    \label{definition:1-dim-corner}
    Let $A : \Rbf^n \to \vect$ be finitely presentable, and let $\Pscr \subseteq \Rbf^n$ be a regular grid.
    We say that $A$ \define{has a thin corner over $\Pscr$} if it is a $\Pscr$-extension and there exists $r \in \Pscr$ such that $A(r) = \kbb$ and $A(s) = 0$ for every $s < r$.
\end{dfn}

See \cref{figure:diagram-proof-main-lemma}$(b)$ for a schematic depiction of two modules having a thin corner.

\begin{notation}
    To help readability, the proofs of \cref{lemma:add-1-dim-corner,lemma:add-antenna-attachment,lemma:moving-attachment,lemma:actual-tacking} are structured similarly: we start with one or more modules on $\Rbf^n$ (denoted $A,B$), by restriction we get modules over a grid (denoted $X,Y$), by modifying these restricted modules locally we get new modules over the grid (denoted $X',Y'$), which are then extended to get modules over $\Rbf^n$ (denoted $A',B'$).
\end{notation}

\begin{lem}
    \label{lemma:add-1-dim-corner}
    Let $\epsilon > 0$, and let $A : \Rbf^n \to \vect$ be indecomposable, finitely presentable, and an $(\epsilon\Zbf)^n$-extension.
    There exists $A' : \Rbf^n \to \vect$ indecomposable, finitely presentable, having a thin corner over $(\frac\epsilon2\Zbf)^n$, and such that $A$ and $A'$ are isomorphic outside some $\frac\epsilon2$-trivial set.
\end{lem}
\begin{proof}
    Since $A$ is finitely presentable, there exists $r \in (\epsilon\Zbf)^n$ such that $A(r) \neq 0$ and $A(s) = 0$ for all $s < r$.
    We will modify $A$ only on the $\frac\epsilon2$-trivial set:
    \[
        \left[r_1, r_1+\frac\epsilon2\right) \times \cdots \times \left[r_n, r_n+\frac\epsilon2\right).
    \]

    \medskip

    \noindent\emph{Discretize $A$.}
    Let $X : (\frac{\epsilon}{2}\Zbf)^n \to \vect$ denote the restriction of $A$.
    Around index $r$ (highlighted entry), the module $X$ restricts to any axis-aligned $2$-dimensional slice corresponding to coordinate indices $1 \leq i < j \leq n$ as follows:
    \[ \scriptsize
        \begin{tikzpicture}
            \matrix (m) [matrix of math nodes,row sep=3em,column sep=0.8em,minimum width=2em,nodes={text height=1.75ex,text depth=0.25ex}]
            {  X(r-\epsilon\ebf_i+\epsilon\ebf_j) & X(r-\frac{\epsilon}{2}\ebf_i+\epsilon\ebf_j) & X(r + \epsilon\ebf_j) 
                   & X(r+\frac{\epsilon}{2}\ebf_i+\epsilon\ebf_j)   & X(r+\epsilon\ebf_i+\epsilon\ebf_j) \\
               0 & 0 & X(r+\frac{\epsilon}{2}\ebf_j)
                   & X(r+\frac{\epsilon}{2}\ebf_i+\frac{\epsilon}{2}\ebf_j) & X(r+\epsilon\ebf_i+\frac{\epsilon}{2}\ebf_j) \\
               0    & 0      & |[fill=lightgray!25, rectangle, outer sep = 2pt, minimum size = 0]| X(r) & X(r+\frac{\epsilon}{2}\ebf_i) & X(r+\epsilon\ebf_i) \\
               0    & 0      & 0      & 0 & X(r+\epsilon\ebf_i-\frac{\epsilon}{2}\ebf_j) \\
               0    & 0      & 0      & 0   & X(r+\epsilon\ebf_i-\epsilon\ebf_j) \\};
            \path[line width=0.5pt, -{>[width=4pt]}]
            (m-1-1) edge node [above] {$\cong$} (m-1-2)
            (m-2-1) edge node [above] {$\cong$} (m-2-2)
            (m-3-1) edge node [above] {$\cong$} (m-3-2)
            (m-4-1) edge node [above] {$\cong$} (m-4-2)
            (m-5-1) edge node [above] {$\cong$} (m-5-2)

            (m-1-2) edge (m-1-3)
            (m-2-2) edge (m-2-3)
            (m-3-2) edge (m-3-3)
            (m-4-2) edge (m-4-3)
            (m-5-2) edge (m-5-3)

            (m-1-3) edge node [above] {$\cong$} (m-1-4)
            (m-2-3) edge node [above] {$\cong$} (m-2-4)
            (m-3-3) edge node [above] {$\cong$} (m-3-4)
            (m-4-3) edge node [above] {$\cong$} (m-4-4)
            (m-5-3) edge node [above] {$\cong$} (m-5-4)

            (m-1-4) edge (m-1-5)
            (m-2-4) edge (m-2-5)
            (m-3-4) edge (m-3-5)
            (m-4-4) edge (m-4-5)
            (m-5-4) edge (m-5-5)

            (m-5-1) edge node [left] {$\cong$} (m-4-1)
            (m-5-2) edge node [left] {$\cong$} (m-4-2)
            (m-5-3) edge node [left] {$\cong$} (m-4-3)
            (m-5-4) edge node [left] {$\cong$} (m-4-4)
            (m-5-5) edge node [left] {$\cong$} (m-4-5)

            (m-4-1) edge (m-3-1)
            (m-4-2) edge (m-3-2)
            (m-4-3) edge (m-3-3)
            (m-4-4) edge (m-3-4)
            (m-4-5) edge (m-3-5)

            (m-3-1) edge node [left] {$\cong$} (m-2-1)
            (m-3-2) edge node [left] {$\cong$} (m-2-2)
            (m-3-3) edge node [left] {$\cong$} (m-2-3)
            (m-3-4) edge node [left] {$\cong$} (m-2-4)
            (m-3-5) edge node [left] {$\cong$} (m-2-5)

            (m-2-1) edge (m-1-1)
            (m-2-2) edge (m-1-2)
            (m-2-3) edge (m-1-3)
            (m-2-4) edge (m-1-4)
            (m-2-5) edge (m-1-5)
            ;
        \end{tikzpicture}
    \]
    where the isomorphisms are due to the fact that $A$ is an $(\epsilon\Zbf)^n$-extension.

    \medskip

    \noindent\emph{Modify $X$.}
    We now define a persistence module $X'$, and then define $A' : \Rbf^n \to \vect$ to be the extension of $X'$.
    Let $X' : (\frac{\epsilon}{2}\Zbf)^n \to \vect$ coincide with $X$ at all places, except at $r$ where it takes the value $\kbb$.
    Let $\iota : \kbb \to X(r)$ be any non-zero morphism.
    For each $1 \leq i \leq n$, the structure morphism $X'(r) \to X'(r+\frac{\epsilon}{2}\ebf_i)$ is defined by composing the corresponding structure morphisms of $X$ with $\iota : \kbb \to X(r)$.
    Thus, around index $r$ (again highlighted), the module $X'$ restricted to any axis-aligned $2$-dimensional slice through $r$ in the direction of two coordinate indices $1 \leq i < j \leq n$ looks as follows:
    \[  \scriptsize
        \begin{tikzpicture}
            \matrix (m) [matrix of math nodes,row sep=3em,column sep=0.8em,minimum width=2em,nodes={text height=1.75ex,text depth=0.25ex}]
            {  X(r-\epsilon\ebf_i+\epsilon\ebf_j) & X(r-\frac{\epsilon}{2}\ebf_i+\epsilon\ebf_j) & X(r + \epsilon\ebf_j) 
                   & X(r+\frac{\epsilon}{2}\ebf_i+\epsilon\ebf_j)   & X(r+\epsilon\ebf_i+\epsilon\ebf_j) \\
               0 & 0 & X(r+\frac{\epsilon}{2}\ebf_j)
                   & X(r+\frac{\epsilon}{2}\ebf_i+\frac{\epsilon}{2}\ebf_j) & X(r+\epsilon\ebf_i+\frac{\epsilon}{2}\ebf_j) \\
               0    & 0      & |[fill=lightgray!25, rectangle, outer sep = 2pt, minimum size = 0]| \kbb & X(r+\frac{\epsilon}{2}\ebf_i) & X(r+\epsilon\ebf_i) \\
               0    & 0      & 0      & 0 & X(r+\epsilon\ebf_i-\frac{\epsilon}{2}\ebf_j) \\
               0    & 0      & 0      & 0   & X(r+\epsilon\ebf_i-\epsilon\ebf_j) \\};
            \path[line width=0.5pt, -{>[width=4pt]}]
            (m-1-1) edge (m-1-2)
            (m-2-1) edge (m-2-2)
            (m-3-1) edge (m-3-2)
            (m-4-1) edge (m-4-2)
            (m-5-1) edge (m-5-2)

            (m-1-2) edge (m-1-3)
            (m-2-2) edge (m-2-3)
            (m-3-2) edge (m-3-3)
            (m-4-2) edge (m-4-3)
            (m-5-2) edge (m-5-3)

            (m-1-3) edge (m-1-4)
            (m-2-3) edge (m-2-4)
            (m-3-3) edge (m-3-4)
            (m-4-3) edge (m-4-4)
            (m-5-3) edge (m-5-4)

            (m-1-4) edge (m-1-5)
            (m-2-4) edge (m-2-5)
            (m-3-4) edge (m-3-5)
            (m-4-4) edge (m-4-5)
            (m-5-4) edge (m-5-5)

            (m-5-1) edge (m-4-1)
            (m-5-2) edge (m-4-2)
            (m-5-3) edge (m-4-3)
            (m-5-4) edge (m-4-4)
            (m-5-5) edge (m-4-5)

            (m-4-1) edge (m-3-1)
            (m-4-2) edge (m-3-2)
            (m-4-3) edge (m-3-3)
            (m-4-4) edge (m-3-4)
            (m-4-5) edge (m-3-5)

            (m-3-1) edge (m-2-1)
            (m-3-2) edge (m-2-2)
            (m-3-3) edge (m-2-3)
            (m-3-4) edge (m-2-4)
            (m-3-5) edge (m-2-5)

            (m-2-1) edge (m-1-1)
            (m-2-2) edge (m-1-2)
            (m-2-3) edge (m-1-3)
            (m-2-4) edge (m-1-4)
            (m-2-5) edge (m-1-5)
            ;
        \end{tikzpicture}
    \]
    To see that the module $X'$ is well-defined, and that its extension $A'$ is finitely presentable, we use \cref{corollary:modify-on-rectangle}(1) with $\delta = \epsilon/2$, hyper-rectangle $S = \{r\}$, $M = X$, $U = \overline{S}$, and $N = X'|_U$, which is described above, and easily seen to be well-defined.

    \medskip

    \noindent\emph{The module $X'$ is indecomposable.}
    In order to see that $X'$ (and thus $A'$) is indecomposable, we use \cref{corollary:modify-on-rectangle}(2).
    It is clear that all indecomposable summands of $N$ are non-zero when restricted to $U \setminus S$ since the only element of $U$ not in $U \setminus S$ is $r$
    and the structure morphism $N(r) \to N(r+\frac{\epsilon}{2}\esf_1)$ is non-zero.
    So it remains to show that $X|_{(\frac{\epsilon}{2}\Zbf)^n \setminus S}$ is indecomposable.
    Consider the composite isomorphism of endomorphism rings
    \[
        \End\left(X|_{(\frac{\epsilon}{2}\Zbf)^n \setminus S}\right)
        \cong
        %\End\left(X'|_{\left(\epsilon\Zbf+\frac{\epsilon}{2}\right)^n}\right)
        %=
        \End\left(X|_{\left(\epsilon\Zbf+\frac{\epsilon}{2}\right)^n}\right)
        \cong
        \End\left(A\right).
    \]
    The first isomorphism is due to the fact that
    the value of $X$ at any element of $(\frac{\epsilon}{2}\Zbf)^n \setminus S$ is isomorphic, through structure morphisms of $X$, to the value of $X$ at an element of $(\epsilon\Zbf + \frac{\epsilon}{2})^n$, and vice versa.
    %The equality is because $X'$ and $X$ coincide over that grid, and
    The second isomorphism is because $X$ is given by restricting $A$ which is an $(\epsilon \Zbf)^n$-extension.
    Since $A$ is indecomposable, it follows that $X|_{(\frac{\epsilon}{2}\Zbf)^n \setminus S}$ is indecomposable, by \cref{lemma:indecomposable-local-ring}.
%     $A$ is isomorphic to the extension of a $(\epsilon\Zbf)^n$-persistence module.
%     The equality is because
%     $A|_{(\frac{\epsilon}{2}(2\Zbf+1))^n} = A'|_{(\frac{\epsilon}{2}(2\Zbf+1))^n}$.
%     For the last isomorphism, note that the value of $A'$ at any element of the grid $(\frac{\epsilon}{2}\Zbf)^n$ other than $r$ is isomorphic, through a structure morphism of $A'$, to the value of $A'$ at an element of the grid $(\frac{\epsilon}{2}(2\Zbf+1))^n$, by construction.
%     Moreover, by construction, the structure morphism $A'(r) = \kbb \to A'(r+\frac\epsilon2)$ is non-zero, and thus a monomorphism; and $(r+\frac\epsilon2) \in (\frac{\epsilon}{2}(2\Zbf+1))^n$.
%     It follows that the action of an endomorphism of $A'$ is completely determined by the action of the endomorphism on the restriction $A'|_{(\frac{\epsilon}{2}(2\Zbf+1))^n}$, proving the last isomorphism.
% 
%     Thus, $A'$ is indecomposable by \cref{lemma:indecomposable-local-ring} and the fact that $A$ is indecomposable.
%     Define $A_1 = \widehat{A'}$, the extension of $A'$ to $\Rbf^n$.
%     Note that $A_1$ is indecomposable, finitely presentable, and isomorphic to $A$ outside the $\frac\epsilon2$-trivial set $U$.

    \medskip

    To conclude, note that $A'$ has a thin corner over $(\frac\epsilon2\Zbf)^n$, by construction.
\end{proof}

\begin{dfn}
    \label{definition:horizontal-antenna-attachment}
    Let $A : \Rbf^n \to \vect$ be finitely presentable, let $\Pscr = (\epsilon\Zbf)^n$ be a regular grid, and let $1 \leq i \leq n$.
    We say that $A$ \define{has an axis-$i$-aligned antenna attachment over $\Pscr$ at $r \in \Pscr$} if $A$ is a $\Pscr$-extension, we have $A(r) = \kbb$ and $A(r-k\epsilon\ebf_i) = 0$ for all $k \geq 1 \in \Nbb$, and the structure morphism $A(r) \to A(r+\epsilon\ebf_j)$ is zero for every $1 \leq j \leq n$ with $j \neq i$.
\end{dfn}

\begin{exm}
    The extension $\widehat\Gsf: \Rbf^2 \to \vect$ has an axis-$1$-aligned antenna attachment over $\Zbf^2$ at $(0,3)$, as well as an axis-$2$-aligned antenna attachment over $\Zbf^2$ at $(3,0)$.
\end{exm}
%The term ``horizontal'' in \cref{definition:horizontal-antenna-attachment} comes from the two-parameter case (i.e., the case in which $n=2$).
%See also \cref{figure:diagram-step-3} for a schematic depiction of two two-parameter persistence modules having a horizontal antenna attachment.

\begin{lem}
    \label{lemma:add-antenna-attachment}
    Let $\epsilon > 0$, and let $A : \Rbf^n \to \vect$ be indecomposable, finitely presentable, and having a thin corner over $(\epsilon\Zbf)^n$.
    Let $\Qscr = (\frac\epsilon5\Zbf)^n$.
    There exists $A' : \Rbf^n \to \vect$ indecomposable, finitely presentable, having an axis-$1$-aligned antenna attachment over $\Qscr$, and such that $A$ and $A'$ are isomorphic outside an $\epsilon$-trivial set.
\end{lem}
\begin{proof}
    Since $A$ has a thin corner over $(\epsilon \Zbf)^n$, there exists $r \in (\epsilon \Zbf)^n$ such that $A(r) = \kbb$ and $A(s) = 0$ for all $s < r$.
    We will modify $A$ only on the $\epsilon$-trivial set
    \[
        \left[r_1, r_1+\frac{4\epsilon}{5}\right) \times \cdots \times \left[r_n, r_n+\frac{4\epsilon}{5}\right).
    \]

    \medskip

    \noindent\emph{Discretize $A$.}
    We make use of the regular grid $(\frac\epsilon5\Zbf)^n$.
    By restricting $A$ to $(\frac\epsilon5\Zbf)^n$, we get a module $X : (\frac\epsilon5\Zbf)^n \to \vect$ that, when restricted to the finite grid
    \[
        \Tscr \coloneqq \left\{-\frac{\epsilon}{5}, 0, \frac{\epsilon}{5}, \frac{2\epsilon}{5}, \frac{3\epsilon}{5}, \frac{4\epsilon}{5}\right\}^n + r
    \]
    and to any axis-aligned $2$-dimensional slice through $r$ in the direction of two coordinate indices $1 \leq i < j \leq n$
    %\begin{align*}
    %    \Tscr = \{(&x-\epsilon/5,x,x+\epsilon/5,x+2\epsilon/5,x+3\epsilon/5,x+4\epsilon/5)\}\\
    %    &\times \{(y-\epsilon/5,y,y+\epsilon/5,y+2\epsilon/5,y+3\epsilon/5,y+4\epsilon/5)\},
    %\end{align*}
    looks as follows (index $r$ highlighted):
    \[\footnotesize
        \begin{tikzpicture}
            \matrix (m) [matrix of math nodes,row sep=3em,column sep=3em,minimum width=2em,nodes={text height=1.75ex,text depth=0.25ex}]
            { 0 & \kbb & \kbb   & \kbb   & \kbb   & \kbb \\
              0 & \kbb & \kbb & \kbb & \kbb & \kbb \\
              0 &  \kbb    & \kbb   & \kbb & \kbb & \kbb \\
             0 &   \kbb    & \kbb      & \kbb   & \kbb & \kbb \\
             0 &  |[fill=lightgray!25, rectangle, outer sep = 2pt, minimum size = 0]| \kbb    & \kbb      & \kbb      & \kbb   & \kbb \\
             0 &      0    & 0         & 0         & 0      &  0 \\};
            \path[line width=0.5pt, -{>[width=6pt]}]
            (m-1-2) edge [-,double equal sign distance] (m-1-3)
            (m-1-3) edge [-,double equal sign distance] (m-1-4)
            (m-1-4) edge [-,double equal sign distance] (m-1-5)
            (m-1-5) edge [-,double equal sign distance] (m-1-6)
            (m-2-3) edge [-,double equal sign distance] (m-2-4)
            (m-2-4) edge [-,double equal sign distance] (m-2-5)
            (m-3-4) edge [-,double equal sign distance] (m-3-5)
            (m-5-6) edge [-,double equal sign distance] (m-4-6)
            (m-4-6) edge [-,double equal sign distance] (m-3-6)
            (m-3-6) edge [-,double equal sign distance] (m-2-6)
            (m-2-6) edge [-,double equal sign distance] (m-1-6)
            (m-4-5) edge [-,double equal sign distance] (m-3-5)
            (m-3-5) edge [-,double equal sign distance] (m-2-5)
            (m-3-4) edge [-,double equal sign distance] (m-2-4)
            (m-2-2) edge [-,double equal sign distance] (m-1-2)
            (m-2-2) edge [-,double equal sign distance] (m-2-3)
            (m-2-3) edge [-,double equal sign distance] (m-1-3)
            (m-2-4) edge [-,double equal sign distance] (m-1-4)
            (m-2-5) edge [-,double equal sign distance] (m-1-5)
            (m-5-5) edge [-,double equal sign distance] (m-5-6)
            (m-5-5) edge [-,double equal sign distance] (m-4-5)
            (m-4-5) edge [-,double equal sign distance] (m-4-6)
            (m-3-5) edge [-,double equal sign distance] (m-3-6)
            (m-2-5) edge [-,double equal sign distance] (m-2-6)
            (m-3-3) edge [-,double equal sign distance] (m-2-3)
            (m-3-3) edge [-,double equal sign distance] (m-3-4)
            (m-4-4) edge [-,double equal sign distance] (m-3-4)
            (m-4-4) edge [-,double equal sign distance] (m-4-5)
            (m-3-2) edge [-,double equal sign distance] (m-2-2)
            (m-4-2) edge [-,double equal sign distance] (m-3-2)
            (m-5-2) edge [-,double equal sign distance] (m-4-2)
            (m-4-3) edge [-,double equal sign distance] (m-3-3)
            (m-5-3) edge [-,double equal sign distance] (m-4-3)
            (m-5-4) edge [-,double equal sign distance] (m-4-4)
            (m-5-2) edge [-,double equal sign distance] (m-5-3)
            (m-5-3) edge [-,double equal sign distance] (m-5-4)
            (m-5-4) edge [-,double equal sign distance] (m-5-5)
            (m-4-2) edge [-,double equal sign distance] (m-4-3)
            (m-4-3) edge [-,double equal sign distance] (m-4-4)
            (m-3-2) edge [-,double equal sign distance] (m-3-3)

            (m-1-1) edge (m-1-2)
            (m-2-1) edge (m-2-2)
            (m-3-1) edge (m-3-2)
            (m-4-1) edge (m-4-2)
            (m-5-1) edge (m-5-2)
            (m-6-1) edge (m-6-2)

            (m-6-2) edge (m-6-3)
            (m-6-3) edge (m-6-4)
            (m-6-4) edge (m-6-5)
            (m-6-5) edge (m-6-6)

            (m-2-1) edge (m-1-1)
            (m-3-1) edge (m-2-1)
            (m-4-1) edge (m-3-1)
            (m-5-1) edge (m-4-1)
            (m-6-1) edge (m-5-1)

            (m-6-2) edge (m-5-2)
            (m-6-3) edge (m-5-3)
            (m-6-4) edge (m-5-4)
            (m-6-5) edge (m-5-5)
            (m-6-6) edge (m-5-6)
            ;
        \end{tikzpicture}
    \]

    \medskip

    \noindent\emph{Modify $X$.}
    We now define a persistence module $X' : (\frac\epsilon5\Zbf)^n \to \vect$ and define $A' : \Rbf^n \to \vect$ to be the extension of $X'$.
    Let $X' : (\frac\epsilon5\Zbf)^n \to \vect$ coincide with $X$ at all places, except at the subgrid
    \[
        \Sscr \coloneqq \left\{-\frac{\epsilon}{5}, 0, \frac{\epsilon}{5}, \frac{2\epsilon}{5}, \frac{3\epsilon}{5}, \frac{4\epsilon}{5}\right\}^2 \times \left\{0\right\}^{n-2} + r \subseteq \Tscr,
    \]
    where we use a copy of the module $\Gsf$ as follows (index $r$ highlighted):
    \[ \footnotesize
        \begin{tikzpicture}
            \matrix (m) [matrix of math nodes,row sep=3em,column sep=3em,minimum width=2em,nodes={text height=1.75ex,text depth=0.25ex}]
            { 0 & \kbb & \kbb   & \kbb   & \kbb   & \kbb \\
              0 & \kbb & \kbb^2 & \kbb^2 & \kbb^2 & \kbb \\
              0 & 0    & \kbb   & \kbb^2 & \kbb^2 & \kbb \\
              0 & 0    & 0      & \kbb   & \kbb^2 & \kbb \\
              0 & |[fill=lightgray!25, rectangle, outer sep = 2pt, minimum size = 0]|0    & 0      & 0      & \kbb   & \kbb \\
              0 & 0    & 0      & 0      & 0      & 0    \\};
            \path[line width=0.5pt, -{>[width=6pt]}]
            (m-1-2) edge [-,double equal sign distance] (m-1-3)
            (m-1-3) edge [-,double equal sign distance] (m-1-4)
            (m-1-4) edge [-,double equal sign distance] (m-1-5)
            (m-1-5) edge [-,double equal sign distance] (m-1-6)

            (m-2-3) edge [-,double equal sign distance] (m-2-4)
            (m-2-4) edge [-,double equal sign distance] (m-2-5)

            (m-3-4) edge [-,double equal sign distance] (m-3-5)

            (m-5-6) edge [-,double equal sign distance] (m-4-6)
            (m-4-6) edge [-,double equal sign distance] (m-3-6)
            (m-3-6) edge [-,double equal sign distance] (m-2-6)
            (m-2-6) edge [-,double equal sign distance] (m-1-6)

            (m-4-5) edge [-,double equal sign distance] (m-3-5)
            (m-3-5) edge [-,double equal sign distance] (m-2-5)

            (m-3-4) edge [-,double equal sign distance] (m-2-4)

            (m-2-2) edge node [left] {\footnotesize $0$} (m-1-2)
            (m-2-2) edge node [above] {\footnotesize $\begin{pmatrix} 1\\ 1\end{pmatrix}$} (m-2-3)
            (m-2-3) edge node [right] {\footnotesize $(1,-1)$} (m-1-3)
            (m-2-4) edge node [right] {\footnotesize $(1,-1)$} (m-1-4)
            (m-2-5) edge node [right] {\footnotesize $(1,-1)$} (m-1-5)

            (m-5-5) edge node [above] {\footnotesize $0$} (m-5-6)
            (m-5-5) edge node [left] {\footnotesize $\begin{pmatrix} 1\\ 1\end{pmatrix}$} (m-4-5)
            (m-4-5) edge node [above] {\footnotesize $(1,-1)$} (m-4-6)
            (m-3-5) edge node [above] {\footnotesize $(1,-1)$} (m-3-6)
            (m-2-5) edge node [above] {\footnotesize $(1,-1)$} (m-2-6)

            (m-3-3) edge node [left] {\footnotesize $\begin{pmatrix} 1\\ 0\end{pmatrix}$} (m-2-3)
            (m-3-3) edge node [above] {\footnotesize $\begin{pmatrix} 1\\ 0\end{pmatrix}$} (m-3-4)

            (m-4-4) edge node [left] {\footnotesize $\begin{pmatrix} 0\\ 1\end{pmatrix}$} (m-3-4)
            (m-4-4) edge node [above] {\footnotesize $\begin{pmatrix} 0\\ 1\end{pmatrix}$} (m-4-5)

            (m-3-2) edge (m-2-2)
            (m-4-2) edge (m-3-2)
            (m-5-2) edge (m-4-2)

            (m-4-3) edge (m-3-3)
            (m-5-3) edge (m-4-3)

            (m-5-4) edge (m-4-4)

            (m-5-2) edge (m-5-3)
            (m-5-3) edge (m-5-4)
            (m-5-4) edge (m-5-5)

            (m-4-2) edge (m-4-3)
            (m-4-3) edge (m-4-4)
        
            (m-3-2) edge (m-3-3)

            (m-1-1) edge (m-1-2)
            (m-2-1) edge (m-2-2)
            (m-3-1) edge (m-3-2)
            (m-4-1) edge (m-4-2)
            (m-5-1) edge (m-5-2)
            (m-6-1) edge (m-6-2)

            (m-6-2) edge (m-6-3)
            (m-6-3) edge (m-6-4)
            (m-6-4) edge (m-6-5)
            (m-6-5) edge (m-6-6)

            (m-2-1) edge (m-1-1)
            (m-3-1) edge (m-2-1)
            (m-4-1) edge (m-3-1)
            (m-5-1) edge (m-4-1)
            (m-6-1) edge (m-5-1)

            (m-6-2) edge (m-5-2)
            (m-6-3) edge (m-5-3)
            (m-6-4) edge (m-5-4)
            (m-6-5) edge (m-5-5)
            (m-6-6) edge (m-5-6)
            ;
            ;
        \end{tikzpicture}
    \]
    The structure morphisms of $X'$ restricted to $\Tscr$ are chosen according to the conditions for functoriality of $X'$.
    More specifically, if $s \in \Sscr$ corresponds to $(x,y) \in \{0,1,2,3,4\}^2$, $t \in \Tscr \setminus \Sscr$, and $s \leq t$, then the morphism $X'(s) \to X'(t)$ is taken to be $X'(s) = \Gsf(x,y) \to \Gsf(4,4) = \kbb = X'(t)$.
    Note that all indices $t \in \Tscr \setminus \Sscr$ are connected via isomorphisms to the indices in the upper right corner of $\Sscr$.

    To see that the module $X'$ is well-defined, and that its extension $A'$ is finitely presentable, we use \cref{corollary:modify-on-rectangle}(1) with $\delta = \epsilon/5$, hyper-rectangle
    \[
        S = \left\{0, \frac{\epsilon}{5}, \frac{2\epsilon}{5}, \frac{3\epsilon}{5}\right\}^2 \times \left\{0\right\}^{n-2} + r,
    \]
    $M = X$, $U = \Sscr$, and $N = X'|_U$, which is described above, and easily seen to be well-defined.
    
    %we use \cref{corollary:modify-on-rectangle} \luis{right lemma?} with $\delta = \epsilon/5$ and the hyper-rectangle
    %\[
    %    S = \left\{0, \frac{\epsilon}{5}, \frac{2\epsilon}{5}, \frac{3\epsilon}{5}\right\}^2 \times \{0\}^{n-2} + r.
    %\]

    \medskip

    \noindent\emph{The module $X'$ is indecomposable.}
    In order to see that $X'$ (and thus $A'$) is indecomposable, we use \cref{corollary:modify-on-rectangle}(2).
            %\todo{I don't understand this comment. Maybe it is unfortunate that we have $\Sscr$ and $S$. I would rename $S$ to maybe $\Uscr$}%
        %\todo{Let's revisit this altogether. I think we don't even need both $\Sscr$ and $S$, it should be enough to just use $S$. Maybe this is a remnant from when we didn't have the principled statements about gluing functors.}
        %The module $N$ is indecomposable, by \cref{lemma:G-is-indecomposable}.
    The module $N$ is indecomposable, by \cref{lemma:G-is-indecomposable}.
    So it remains to show that $X|_{(\frac{\epsilon}{5}\Zbf)^n \setminus S}$ is indecomposable. 
    Consider the composite isomorphism of endomorphism rings
    \[
        \End\left(X|_{(\frac{\epsilon}{5}\Zbf)^n \setminus S}\right)
        \cong
        %\End\left(X'|_{(\epsilon\Zbf)^n + \frac{4\epsilon}{5}}\right)
        %=
        \End\left(X|_{(\epsilon\Zbf)^n + \frac{4\epsilon}{5}}\right)
        \cong
        \End\left(A\right).
    \]
    The first isomorphism is due to the fact that
    for any element of $(\frac{\epsilon}{5}\Zbf)^n \setminus S$, there is an element of $(\epsilon\Zbf + \frac{4\epsilon}{5})^n$ such that the structure morphism of $X$ between these two elements is an isomorphisms, since $X$ is a restriction of $A$, which in turn is an extension of the $(\epsilon\Zbf)^n$ grid.
    %The equality is because $X'$ and $X$ coincide over that grid, and
    The second isomorphism is because $A$ is an $(\epsilon \Zbf)^n$-extension.
    Since $A$ is indecomposable, it follows that $X|_{(\frac{\epsilon}{2}\Zbf)^n \setminus S}$ is indecomposable, by \cref{lemma:indecomposable-local-ring}.

    \medskip

    To conclude, note that $A'$ has an axis-$1$-aligned antenna attachment at $r+\frac{3\epsilon}{5}\ebf_2$, by construction.
\end{proof}

The following lemma allows us to move the coordinates of antenna attachments by a local modification on an $\epsilon$-trivial subset, resulting in a controlled perturbation in the interleaving distance.
Intuitively, the modification can be described as extending an antenna from the original attachment point, iteratively moving in coordinate directions alternating between moving backward in odd coordinates and forward in even coordinates.
We move in this way so that we can leave structure morphisms outside the $\epsilon$-trivial subset essentially untouched, without having to worry about the commutativity of the structure morphisms of the modified persistence module.

\begin{lem}
    \label{lemma:moving-attachment}
    Let $n \geq 2 \in \Nbb$, $\epsilon > 0$, and let $A : \Rbf^n \to \vect$ be indecomposable, finitely presentable, and having an axis-$1$-aligned antenna attachment over $(\epsilon\Zbf)^n$ at $r \in (\epsilon\Zbf)^n$.
    Let $s \in (\epsilon\Zbf)^n$ be such that $s_k < r_k$ for all $1 \leq k \leq n$ odd, $s_k > r_k$ for all $1 \leq k \leq n$ even,  and $A(t) = 0$ for $t \in (\epsilon\Zbf)^n$ with $t_1 \leq s_1$.
    Let $\ell = 1$ if $n$ is even, and $\ell = n$ if $n$ is odd.
    There exists $A' : \Rbf^n \to \vect$ indecomposable, finitely presentable, having an axis-$\ell$-aligned antenna attachment over $(\epsilon\Zbf)^n$ at $s$, and such that $A$ and $A'$ are isomorphic outside an $\epsilon$-trivial set.
\end{lem}

\begin{proof}
    Consider the following hyper-rectangles of $(\epsilon \Zbf)^n$
    \begin{align*}
        S_1 &= [s_1, r_1 - \epsilon]
                \times
                \left(\prod_{2 \leq j \leq n} \{r_j\} \right)\\
        S_2 &= \{s_1\}
                \times
                [r_2 + \epsilon, s_2]
                \times
                \left(\prod_{3 \leq j \leq n} \{r_j\}\right)\\
        &\;\;\vdots \\
        S_k &= \left(\prod_{1 \leq i \leq k-1} \{s_i\}\right)
            \times
            \begin{rcases}
                \begin{dcases}
                [s_k, r_k - \epsilon] & \text{ if $k$ is odd}\\
                [r_k + \epsilon, s_k] & \text{ if $k$ is even}
                \end{dcases}
              \end{rcases}
            \times
            \left(\prod_{k+1 \leq j \leq n} \{r_j\}\right)\\
        &\;\;\vdots
    \end{align*}
    We will modify $A$ only on the $\epsilon$-trivial set
    $T \coloneqq T_1 \cup \cdots \cup T_n \subseteq \Rbf^n$ where $T_k$ is defined to be the extension $\widehat{S_k}$ of $S_k$ to $\Rbf^n$, as in \cref{lemma:extension-set-is-trivial}; that is:
    %$T_k = \widehat{S_k}$, 
    \[
        %T_1 &= [s_1, r_1)
        %        \times
        %        \left(\prod_{2 \leq j \leq n} [r_j, r_j + \epsilon)\right)\\
        %T_2 &= [s_1, s_1+\epsilon)
        %        \times
        %        [r_2 + \epsilon, s_2 + \epsilon)
        %        \times
        %        \left(\prod_{3 \leq j \leq n} [r_j, r_j + \epsilon)\right)\\
        %&\;\;\vdots \\
        T_k = \left(\prod_{1 \leq i \leq k-1} [s_i, s_i+\epsilon)\right)
            \times
            \begin{rcases}
                \begin{dcases}
                [s_k, r_k) & \text{ if $k$ is odd}\\
                [r_k + \epsilon, s_k + \epsilon) & \text{ if $k$ is even}
                \end{dcases}
              \end{rcases}
            \times
            \left(\prod_{k+1 \leq j \leq n} [r_j, r_j + \epsilon)\right).
        %&\;\;\vdots
    \]
    Since $A$ has an axis-1-aligned antenna attachment over $(\epsilon \Zbf)^n$ at $r$, we have $A|_{T_1} = 0$; and since $A(t) = 0$ whenever $t_1 \leq s_1$, we have $A|_{T_2 \cup \dots \cup T_n} = 0$.
    It follows that $A|_T = 0$.
    The idea is to let $A'$ coincide with $A$ everywhere, except on $T$, where it takes the value $\kbb$, with the new structure morphisms being the identity whenever possible.
    To make this precise, we proceed as follows.

    \medskip

    \noindent\emph{Discretize $A$.}
    Let $X : (\epsilon \Zbf)^n \to \vect$ denote the restriction of $A$ to $(\epsilon \Zbf)^n$.

    \medskip

    \noindent\emph{Modify $X$.}
    We now construct a sequence of indecomposable persistence modules $X_0, \dots, X_n : (\epsilon \Zbf)^n \to \vect$ inductively, starting with $X_0 = X$, and set $X' = X_n$.

    Let $X_{k}$ coincide with $X_{k-1}$ everywhere except on $S_k$, where we set it to be constantly $\kbb$.
    To see that the functor $X_k$ is well-defined and indecomposable, and that its extension is finitely presentable, we use \cref{corollary:modify-on-rectangle} with $\delta = \epsilon$, $S = S_k$, and $U = \overline{S}$.
    The details are similar to those in the proofs of \cref{lemma:add-1-dim-corner,lemma:add-antenna-attachment}, although simpler since we are replacing the zero vector spaces $X_{k-1}|_{S_k} = 0$ by $\kbb$ in $X_k|_{S_k}$.
    Let us give the details of the case $k=1$, since this is the case in which the assumption that $A$ has an antenna attachment at $r$ comes in.

    In the case $k=1$, we have $S = S_1$, and $N : \overline{S} \to \vect$ is defined to agree with $X_0 = X$ except on $S_1$ where it is defined to be constantly $\kbb$.
    The structure morphisms of $N$ are set to be the identity between elements of $S \cup \{r\}$ and zero otherwise, using the fact that for every $s \in S \cup \{r\}$ and $s' \in \partial S \setminus \{r\}$, the structure morphisms $X(s) \to X(s')$ is zero, since $X(s) = 0$ when $s \in S$, and $X(s) \to X(s')$ is zero when $s = r$ by the assumption that $X$ has an axis-1-aligned antenna attachment at $r$.
    Then, $N$ and $X$ agree on $U \setminus S = \partial S$ because $A$ has an axis-$1$-aligned antenna attachment at $r$.
    This lets us apply \cref{corollary:modify-on-rectangle}(1).
    Satisfying the hypothesis of \cref{corollary:modify-on-rectangle}(2) in this case is straightforward.

    \medskip

    To conclude, let $A'$ be the extension of $X'$ to $\Rbf^n$.
    The module $A'$ is finitely presentable and indecomposable.

    \medskip

    \noindent\emph{The module $A'$ has an antenna attachment.}
    The module $A'$ has an axis-$\ell$-aligned antenna attachment on $(\epsilon\Zbf)^n$ at
    \[
        s \in S_n = \left(\prod_{1 \leq i \leq n-1} \{s_i\}\right)
            \times
            \begin{rcases}
                \begin{dcases}
                [s_n, r_n - \epsilon] & \text{ if $n$ is odd}\\
                [r_n + \epsilon, s_n] & \text{ if $n$ is even}
                \end{dcases}
              \end{rcases}.
    \]
    To see this, recall that $\ell = 1$ if $n$ is even and $\ell = n$ if $n$ is odd.

    In the $n$ even case, the structure morphism $X'(s) \to X'(s + \epsilon\esf_i)$ is zero for all $1 \leq i \leq n$ because this is true for $X$ and $\{s + \epsilon\esf_i : 1 \leq i \leq n \}$ does not intersect $S_1 \cup \dots \cup S_n$.
    Also, $X'(s-k\epsilon\esf_1) = 0$ for all $k > 1 \in \Nbb$, since this is true for $X$ and $\{s-k\epsilon\esf_1 : k > 1 \in \Nbb\}$ does not intersect $S_1 \cup \dots \cup S_n$.

    In the $n$ odd case, the structure morphism $X'(s) \to X'(s + \epsilon\esf_i)$ is zero for all $1 \leq i < n$ because this is true for $X$ and $\{s + \epsilon\esf_i : 1 \leq i < n \}$ does not intersect $S_1 \cup \dots \cup S_n$.
    Also, $X'(s-k\epsilon\esf_n) = 0$ for all $k > 1 \in \Nbb$, since this is true for $X$ and $\{s-k\epsilon\esf_n : k > 1 \in \Nbb\}$ does not intersect $S_1 \cup \dots \cup S_n$.
\end{proof}

%\begin{lem}
%    \label{lemma:sheaf-condition-persistence-module}
%    Let $\epsilon > 0$ and let $a \in \epsilon \Zbf$.
%    Let $\ssf(a) = \{r \in (\epsilon \Zbf)^n : r_1 \leq a\}$, $\gsf(a) = \{r \in (\epsilon \Zbf)^n : r_1 \geq a\}$, and $\esf(a) = \{r \in (\epsilon \Zbf)^n : r_1 = a\}$, which are all subposets of $(\epsilon \Zbf)^n$.
%    Assume given persistence modules $X : \ssf(a) \to \vect$ and $Y : \gsf(a) \to \vect$ such that $X|_{\esf(a)} = Y|_{\esf(a)}$.
%    Then, there exists a unique persistence module $C : (\epsilon\Zbf)^n \to \vect$ such that $C|_{\ssf(a)} = X$ and $C|_{\gsf(a)} = Y$.
%    If, moreover, $X$ is indecomposable and, for every indecomposable summand $I$ of $Y$, we have that $I|_{\esf(a)} \neq 0$, then $C$ is indecomposable.
%\end{lem}
%\begin{proof}
%    The statement about the existence and uniqueness of $C$ is clear.
%    Now, assume that $C \cong S \oplus T$.
%    On one hand, we have $X \cong S|_{\ssf(a)} \oplus T|_{\ssf(a)}$.
%    Since $X$ is indecomposable, without loss of generality, it must be the case that $T_{\ssf(a)} = 0$.
%    In particular, $T_{\esf(a)} = 0$.
%    On the other hand, $Y \cong S|_{\gsf(a)} \oplus T|_{\gsf(a)}$, and, since all the indecomposable summands $I$ of $Y$ satisfy $I|_{\esf(a)} \neq 0$, we must have $T|_{\gsf(a)} = 0$.
%    Thus, $T = 0$ and $C$ does not admit any non-trivial decompositions.
%\end{proof}

\begin{lem}
    \label{lemma:actual-tacking}
    Let $\ell \neq \ell' \in \{1, \dots, n\}$, let $\epsilon > 0$, and let $A, B : \Rbf^n \to \vect$ be indecomposable, finitely presentable, and having an axis-$\ell$-aligned antenna attachment over $(\epsilon\Zbf)^n$ and at $r \in (\epsilon\Zbf)^n$ and $r-\epsilon\ebf_{\ell'} \in (\epsilon\Zbf)^n$, respectively.
    There exists $M : \Rbf^n \to \vect$ indecomposable, finitely presentable, an $(\epsilon\Zbf)^n$-extension, and such that $M$ and $A\oplus B$ are isomorphic outside a $(5\epsilon)$-trivial set.
\end{lem}

\begin{figure}
    \centering
    \includegraphics[width=0.8\linewidth]{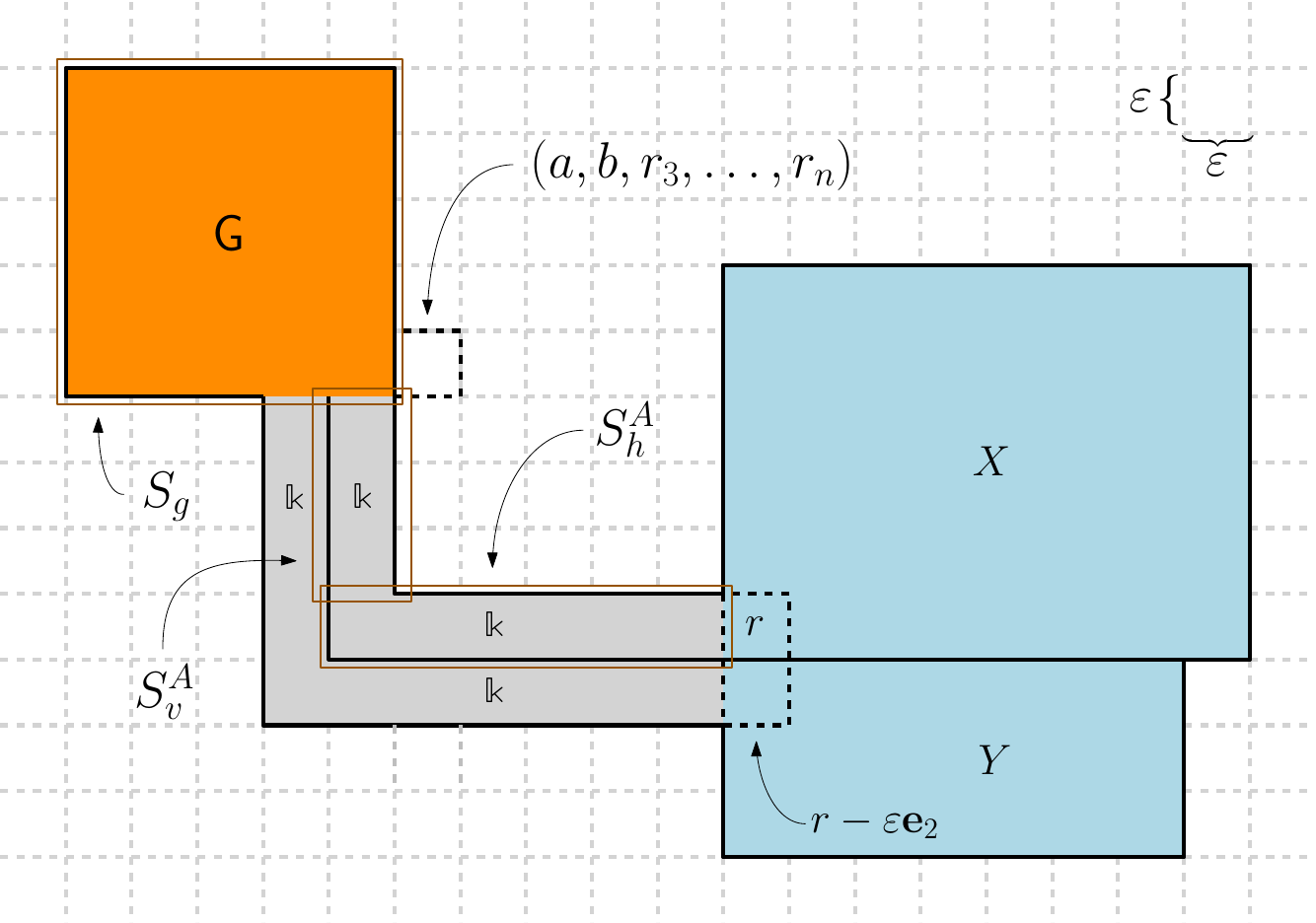}
    \caption{
    %\todo{This figure needs to be edited due to some changes in notation}
    A schematic summary of the main constructions in the proof of \cref{lemma:actual-tacking}.
    Depicted is a $2$-dimensional slice corresponding to the first two coordinates, where coordinates $3$ to $n$ equal $r_3$ to $r_n$.}
    \label{figure:diagram-step-3}
\end{figure}

\begin{proof}
For a diagrammatic description of the main constructions in this proof, see \cref{figure:diagram-step-3}.

    Without loss of generality, we may assume that $\ell = 1$ and $\ell' = 2$.
    Choose $a,b \in \epsilon\Zbf$ such that $a < r_1$ and $b > r_2$, and such that $A(t) = B(t) = 0$ for all $t \in (\epsilon \Zbf)^n$ with $t_1 \leq a$.

    Define the following hyper-rectangles in $(\epsilon \Zbf)^n$:
    \begin{align*}
        S_g &= [a-5\epsilon,a-\epsilon] \times [b, b+4\epsilon] \times \left(\prod_{3 \leq k \leq n} \{r_k\}\right)\\
        S^A_h &= [a-\epsilon, r_1] \times \{r_2\}\times \left(\prod_{3 \leq k \leq n} \{r_k\}\right)\\
        S^B_h &= [a-2\epsilon, r_1] \times \{r_2-\epsilon\}\times \left(\prod_{3 \leq k \leq n} \{r_k\}\right)\\
        S^A_v &= \{a-\epsilon\} \times [r_2+\epsilon, b-\epsilon] \times \left(\prod_{3 \leq k \leq n} \{r_k\}\right)\\
        S^B_v &= \{a-2\epsilon\} \times [r_2, b-\epsilon]\times \left(\prod_{3 \leq k \leq n} \{r_k\}\right).
    \end{align*}
    %, with
    %\begin{align*}
    %    T &= [a-5\epsilon,a) \times [b, b+5\epsilon) \times \left(\prod_{3 \leq k \leq n} [r_k, r_k+\epsilon)\right)\\
    %    U_h &= [a-2\epsilon, r_1-\epsilon) \times [r_2-\epsilon, r_2+\epsilon)\times \left(\prod_{3 \leq k \leq n} [r_k, r_k+\epsilon)\right)\\
    %    U_v &= [a-2\epsilon, a) \times [r_2-\epsilon, b)\times \left(\prod_{3 \leq k \leq n} [r_k, r_k+\epsilon)\right).
    %\end{align*}
    We will modify $A \oplus B$ only on the $(5\epsilon)$-trivial set $\widehat{S_g} \cup \widehat{S^A_h} \cup \widehat{S^B_h} \cup \widehat{S^A_v} \cup \widehat{S^B_v}$, where hats denote extension to $\Rbf^n$, as in \cref{lemma:extension-set-is-trivial}.
    %$T_k = \widehat{S_k}$, 
    %\[
    %    %T_1 &= [s_1, r_1)
    %    %        \times
    %    %        \left(\prod_{2 \leq j \leq n} [r_j, r_j + \epsilon)\right)\\
    %    %T_2 &= [s_1, s_1+\epsilon)
    %    %        \times
    %    %        [r_2 + \epsilon, s_2 + \epsilon)
    %    %        \times
    %    %        \left(\prod_{3 \leq j \leq n} [r_j, r_j + \epsilon)\right)\\
    %    %&\;\;\vdots \\
    %    T_k = \left(\prod_{1 \leq i \leq k-1} [s_i, s_i+\epsilon)\right)
    %        \times
    %        \begin{rcases}
    %            \begin{dcases}
    %            [s_k, r_k) & \text{ if $k$ is odd}\\
    %            [r_k + \epsilon, s_k + \epsilon) & \text{ if $k$ is even}
    %            \end{dcases}
    %          \end{rcases}
    %        \times
    %        \left(\prod_{k+1 \leq j \leq n} [r_j, r_j + \epsilon)\right).
    %    %&\;\;\vdots
    %\]

    %, with
    %\begin{align*}
    %    T_g &= [a-5\epsilon,a) \times [b, b+5\epsilon) \times \left(\prod_{3 \leq k \leq n} [r_k, r_k+\epsilon)\right)\\
    %    T^A_h &= [a-\epsilon, r_1+\epsilon) \times [r_2, r_2+\epsilon)\times \left(\prod_{3 \leq k \leq n} [r_k, r_k+\epsilon)\right)\\
    %    T^B_h &= [a-2\epsilon, r_1+\epsilon) \times [r_2-\epsilon, r_2)\times \left(\prod_{3 \leq k \leq n} [r_k, r_k+\epsilon)\right)\\
    %    T^A_v &= [a-\epsilon, a) \times [r_2+\epsilon, b)\times \left(\prod_{3 \leq k \leq n} [r_k, r_k+\epsilon)\right)\\
    %    T^B_v &= [a-2\epsilon, a-\epsilon) \times [r_2, b)\times \left(\prod_{3 \leq k \leq n} [r_k, r_k+\epsilon)\right).
    %\end{align*}

    \medskip

    \noindent\emph{Discretize $A$ and $B$.}
    We let $X,Y : (\epsilon \Zbf)^n \to \vect$ denote the restriction of $A$ and $B$ to $(\epsilon \Zbf)^n$, respectively.
    \medskip

    \noindent\emph{Modify $X$ and $Y$.}
    With an argument analogous to that of the proof of \cref{lemma:moving-attachment}, we define an indecomposable module $X' : (\epsilon\Zbf)^n \to \vect$ (respectively $Y'$), which coincides with $X$ (respectively $Y$) everywhere, except on $S^A_h \cup S^A_v \cup \{r\}$ (respectively $S^B_h \cup S^B_v \cup \{r - \epsilon \esf_2$), where we set it to be constantly $\kbb$.

    \medskip

    \noindent\emph{Tack $X'$ and $Y'$ together with a copy of $\Gsf$.}
    Consider the module $Z = X' \oplus Y'$.
    We now define a module $W : \overline{S} \to \vect$.
    Let
    \[
        S_g' = [a-5\epsilon,a-\epsilon] \times [b-\epsilon, b+4\epsilon] \times \left(\prod_{3 \leq k \leq n} \{r_k\}\right).
    \]
    Note that $S_g \subseteq S_g' \subseteq \overline{S_g}$, define $W|_{\overline{S}\setminus S_g'} = 0$, and define $W|_{S_g'}$ using a copy of $\Gsf$ and two extra $\kbb$ in the bottom row as follows:
    \[ \footnotesize
        \begin{tikzpicture}
            \matrix (m) [matrix of math nodes,row sep=3em,column sep=3em,minimum width=2em,nodes={text height=1.75ex,text depth=0.25ex}]
            { \kbb & \kbb   & \kbb   & \kbb   & \kbb \\
              \kbb & \kbb^2 & \kbb^2 & \kbb^2 & \kbb \\
              0    & \kbb   & \kbb^2 & \kbb^2 & \kbb \\
              0    & 0      & \kbb   & \kbb^2 & \kbb \\
              0    & 0      & 0      & \kbb   & \kbb \\
              0    & 0      & 0      & \kbb      & \kbb    \\};
            \path[line width=0.5pt, -{>[width=6pt]}]
            (m-1-1) edge [-,double equal sign distance] (m-1-2)
            (m-1-2) edge [-,double equal sign distance] (m-1-3)
            (m-1-3) edge [-,double equal sign distance] (m-1-4)
            (m-1-4) edge [-,double equal sign distance] (m-1-5)

            (m-2-2) edge [-,double equal sign distance] (m-2-3)
            (m-2-3) edge [-,double equal sign distance] (m-2-4)

            (m-3-3) edge [-,double equal sign distance] (m-3-4)

            (m-5-5) edge [-,double equal sign distance] (m-4-5)
            (m-4-5) edge [-,double equal sign distance] (m-3-5)
            (m-3-5) edge [-,double equal sign distance] (m-2-5)
            (m-2-5) edge [-,double equal sign distance] (m-1-5)

            (m-4-4) edge [-,double equal sign distance] (m-3-4)
            (m-3-4) edge [-,double equal sign distance] (m-2-4)

            (m-3-3) edge [-,double equal sign distance] (m-2-3)

            (m-2-1) edge node [left] {\footnotesize $0$} (m-1-1)
            (m-2-1) edge node [above] {\footnotesize $\begin{pmatrix} 1\\ 1\end{pmatrix}$} (m-2-2)
            (m-2-2) edge node [right] {\footnotesize $(1,-1)$} (m-1-2)
            (m-2-3) edge node [right] {\footnotesize $(1,-1)$} (m-1-3)
            (m-2-4) edge node [right] {\footnotesize $(1,-1)$} (m-1-4)

            (m-5-4) edge node [above] {\footnotesize $0$} (m-5-5)
            (m-5-4) edge node [left] {\footnotesize $\begin{pmatrix} 1\\ 1\end{pmatrix}$} (m-4-4)
            (m-4-4) edge node [above] {\footnotesize $(1,-1)$} (m-4-5)
            (m-3-4) edge node [above] {\footnotesize $(1,-1)$} (m-3-5)
            (m-2-4) edge node [above] {\footnotesize $(1,-1)$} (m-2-5)

            (m-3-2) edge node [left] {\footnotesize $\begin{pmatrix} 1\\ 0\end{pmatrix}$} (m-2-2)
            (m-3-2) edge node [above] {\footnotesize $\begin{pmatrix} 1\\ 0\end{pmatrix}$} (m-3-3)

            (m-4-3) edge node [left] {\footnotesize $\begin{pmatrix} 0\\ 1\end{pmatrix}$} (m-3-3)
            (m-4-3) edge node [above] {\footnotesize $\begin{pmatrix} 0\\ 1\end{pmatrix}$} (m-4-4)

            (m-3-1) edge (m-2-1)
            (m-4-1) edge (m-3-1)
            (m-5-1) edge (m-4-1)

            (m-4-2) edge (m-3-2)
            (m-5-2) edge (m-4-2)

            (m-5-3) edge (m-4-3)

            (m-5-1) edge (m-5-2)
            (m-5-2) edge (m-5-3)
            (m-5-3) edge (m-5-4)

            (m-4-1) edge (m-4-2)
            (m-4-2) edge (m-4-3)
        
            (m-3-1) edge (m-3-2)

            (m-6-1) edge (m-6-2)
            (m-6-2) edge (m-6-3)
            (m-6-3) edge (m-6-4)
            (m-6-4) edge node [above] {\footnotesize $0$} (m-6-5)

            (m-6-1) edge (m-5-1)
            (m-6-2) edge (m-5-2)
            (m-6-3) edge (m-5-3)
            (m-6-4) edge node [left] {\footnotesize $1$} (m-5-4)
            (m-6-5) edge node [left] {\footnotesize $1$} (m-5-5)
            ;
        \end{tikzpicture}
    \]

    By \cref{corollary:modify-on-rectangle}(1) with $\delta = \epsilon$, hyper-rectangle $S = S_g$, $M = Z$, $N = W$, and $U = \overline{S}$, there exists a module $L$ such that $L|_{(\epsilon \Zbf)^n \setminus S_g} = Z|_{(\epsilon \Zbf)^n \setminus S_g}$ and $L|_{S_g} = W$.
    The module $L$ is also indecomposable by \cref{corollary:modify-on-rectangle}(2) since
    $N$ is indecomposable, and
    $L|_{(\epsilon \Zbf)^n \setminus S_g} \cong
        X'|_{(\epsilon \Zbf)^n \setminus S_g} \oplus
        Y'|_{(\epsilon \Zbf)^n \setminus S_g}$
    is a decomposition into indecomposables and both summands in the decomposition have non-trivial restrictions to $\overline{S}$.

    \medskip

    Finally, extend $L$ to the poset $\Rbf^n$ to get an indecomposable, finitely presentable module $M$, which coincides with $A \oplus B$ outside a $(5\epsilon)$-trivial set.
\end{proof}

We now recall the statement of the Tacking Lemma, before giving the proof.

\mainlemma*
\begin{proof}
    %Let $A$, $B$, and $\delta$ be as in the statement, and 
    Let $\Pscr = (\tau\Zbf)^n$, for some $\tau> 0$.
    Let $m \in \Nbb$ be such that $\epsilon_0 \coloneqq \tau/m < \delta/4$.
    Note that $A$ and $B$ are also $(\epsilon_0\Zbf)^n$-extensions.

    Using \cref{lemma:add-1-dim-corner} and \cref{lemma:local-changes}, we replace $A$ and $B$ with $A_1$ and $B_1$ having a thin corner over $(\frac{\epsilon_0}{2}\Zbf)^n$, and at distance at most $\epsilon_0/2 < \delta/4$ from $A$ and $B$, respectively.

    Then, using \cref{lemma:add-antenna-attachment}  and \cref{lemma:local-changes}, we replace $A_1$ and $B_1$ by $A_2$ and $B_2$ having axis-$1$-aligned antenna attachments over
    $(\frac{\epsilon_0}{2\cdot 5}\Zbf)^n =
      (\frac{\epsilon_0}{10}\Zbf)^n$
    at $\alpha, \beta \in (\frac{\epsilon_0}{10}\Zbf)^n$, and at distance at most $\epsilon_0/2 < \delta/4$ from $A_2$ and $B_2$, respectively.
 
    Let $u \in (\frac{\epsilon_0}{10}\Zbf)^n$ be such that
    \begin{itemize}
        \item $u_k < \alpha_k - \epsilon_0/10$ and $u_k < \beta_k - \epsilon_0/10$ for all $1 \leq k \leq n$ odd;
        \item $u_k > \alpha_k + \epsilon_0/10$ and $u_k > \beta_k + \epsilon_0/10$ for all $1 \leq k \leq n$ even;
        \item $A_2(t) = B_2(t) = 0$ whenever $t_1 \leq u_1$.
    \end{itemize}
    Let $\ell = 1$ and $\ell' = 2$ if $n$ is even, and $\ell = n$ and $\ell' = n-1$ if $n$ is odd; and let $r = u + (\epsilon_0/10)\ebf_{\ell'}$. 
    We now use \cref{lemma:moving-attachment} and \cref{lemma:local-changes} to replace $A_2$ and $B_2$ by modules $A_3$ and $B_3$ having axis-$\ell$-aligned antenna attachments over $(\frac{\epsilon_0}{10}\Zbf)^n$ at $r$ and at $r-(\epsilon_0/10)\ebf_{\ell'}$ respectively, and at distance at most $\epsilon_0/10 < \delta/4$ from $A_2$ and $B_2$, respectively.

    Finally, we use \cref{lemma:actual-tacking} and 
    \cref{lemma:local-changes} to construct an indecomposable module $L$, which is an $((\epsilon_0/10)\Zbf)^n$-extension, and which is at distance at most $5(\epsilon_0/10) = (\epsilon_0/2) < \delta/4$ from $A_3 \oplus B_3$.

    Set $M = L$.
    To conclude, we have
    \begin{align*}
        d_I(M,A\oplus B) \leq\; & d_I(M,A_3 \oplus B_3) + d_I(A_3 \oplus B_3, A_2 \oplus B_2)\\
        &+ d_I(A_2 \oplus B_2, A_1 \oplus B_1) + d_I(A_1 \oplus B_1, A \oplus B) < \delta%\qedhere
    \end{align*}
    %\begin{itemize}
    %    \item $\epsilon_0$ grid, subdivides to $\epsilon_0/2$ grid, $(\epsilon_0/2)$-trivial set.
    %    \item $\epsilon_1 = \epsilon_0/2$ grid, subdivides to $\epsilon_1/5 = \epsilon_0/10$ grid, $(\epsilon_1/5) = (\epsilon_0/10)$-trivial set.
    %    \item $\epsilon_2 = \epsilon_0/10$ grid, (does not subdivide) to $\epsilon_2 = \epsilon_0/10$ grid, $\epsilon_2 = \epsilon_0/10$-trivial set.
    %    \item $\epsilon_3 = \epsilon_0/10$ grid, (does not subdivide) to $\epsilon_3 = \epsilon_0/10$ grid, $5\epsilon_3 = \epsilon_0/2$-trivial set.
    %\end{itemize}
    as claimed.
\end{proof}

%\section{Missing arguments when $n>2$}
%\label{section:n-greater-than-2}
%
%\todo{}

\section{Nearly indecomposables are open}
\label{section:almost-indecomposable-open}

Before we can prove \cref{proposition:stability-indecomposability}, we need a definition and two lemmas.

\begin{dfn}
    The set of \emph{mesh widths} of a countable grid $\Pscr = \left(\{s_i\}_{i \in \Zbb}\right)^n \subseteq \Rbf^n$ is the set of differences of the grid coordinates $\{s_{i+1} - s_i : i \in \Zbb\}$.
\end{dfn}

Note that for any $\beta > 0$, every finite grid $S^n$ is included in some countable grid with mesh widths contained in the closed interval $[\alpha,\beta]$, for some $\alpha > 0$; one can take, for example, the countable grid $(S \cup (\beta\Zbf))^n$.

\begin{lem}
    \label{lemma:factor-bounded-grid}
    Let $0 < \alpha < \beta \in \Rbb$, let $\Pscr$ be a countable grid with mesh widths contained in the closed interval $[\alpha, \beta]$, and let $L : \Rbf^n \to \vect$.
    For every $r \leq \alpha$ there exists a morphism $m : L_{\Pscr + r}[r] \to L_\Pscr[\beta]$ rendering the following square commutative:
    \[
    \begin{tikzpicture}
        \matrix (m) [matrix of math nodes,row sep=4em,column sep=7em,minimum width=2em,nodes={text height=1.75ex,text depth=0.25ex}]
        {  L_{\Pscr}  &  L_{\Pscr}[\beta] \\
           L[r]_\Pscr & L_{\Pscr + r}[r],\\};
        \path[line width=0.75pt, -{>[width=8pt]}]
        (m-1-1) edge node [above] {$\eta^{L_\Pscr}_\beta$} (m-1-2)
        (m-1-1) edge node [left] {$(\eta^L_r)_\Pscr$} (m-2-1)
        (m-2-2) edge node [right] {$m$} (m-1-2)
        (m-2-1) edge [-,double equal sign distance] (m-2-2)
        ;
    \end{tikzpicture}
    \]
\end{lem}

\begin{proof}
    We start by defining the morphism $m$, which is equivalently a morphism $m : L_{\Pscr+r} \to L_\Pscr[\beta-r]$.
    If $x \in \Rbf^n$, then
    $L_{\Pscr + r}(x) = L(\sup\{ s + r : s \in \Pscr, s + r \leq x\})$ and
    \[
        L_{\Pscr}[\beta-r](x) = L_{\Pscr}(x + \beta - r) = L(\sup\{s : s \in \Pscr, s \leq x + \beta - r\}).
    \]
    Let $s_0$ such that $s_0 + r = \sup\{ s + r : s \in \Pscr, s + r \leq x\}$, so that we have $L_{\Pscr + r}(x) = L(s_0 + r)$.
    If $\Pscr = (\{s_i\}_{i \in \Zbb})^n$, then $s_0 = (s_{i_1}, \dots, s_{i_n})$.
    Let $s_1 = (s_{i_1+1}, \dots, s_{i_n+1})$.
    Note that, since $\Pscr$ has mesh widths contained in $[\alpha,\beta]$, we have $s_1 - \beta \leq s_0$.
    Thus, $s_1 - \beta + r \leq s_0 + r \leq x$, which implies $s_1 \leq x + \beta - r$.
    This means that
    \begin{equation}
        \label{equation:s1-inequality}
        s_1 \leq \sup\{s : s \in \Pscr, s \leq x + \beta - r\}.
    \end{equation}
    Also, $r \leq \alpha$, so we have
    \begin{equation}
        \label{equation:s0-inequality}
        s_0 + r \leq s_1
    \end{equation}
    We can then consider the composite:
    \begin{multline*}
        L_{\Pscr + r}(x) = L(s_0 + r)
        \xrightarrow{\eqref{equation:s0-inequality}}
        L(s_1)
        \xrightarrow{\eqref{equation:s1-inequality}}
        L(\sup\{s \in \Pscr : s \leq x + \beta - r\}) \\= L_{\Pscr}[\beta-r](x),
    \end{multline*}
    given by the structure morphisms of $L$.
    Let us denote the above composite by $m_x$.
    Since the morphism was constructed only using the structure morphisms of $L$, it is straightforward to check that the morphisms $m_x : L_{\Pscr + r}(x) \to L_{\Pscr}[\beta-r](x)$ assemble into a natural transformation $m : L_{\Pscr+r} \to L_\Pscr[\beta-r]$.
    By the same reason, it follows that $m$ renders commutative the square in the statement.
\end{proof}

\begin{lem}
    \label{lemma:snapping-trivial}
    Let $\epsilon , \beta > 0$, let $\Pscr$ be a countable grid with mesh widths bounded above by $\beta$, and let $L : \Rbf^n \to \vect$.
    If $L_\Pscr$ is (strictly) $\epsilon$-trivial, then $L$ is (strictly) $(\epsilon + \beta)$-trivial.
\end{lem}

\begin{proof}
We show the non-strict version of the statement, which directly implies the strict one.
Specifically, we prove that, for any $x \in \Rbf^n$, the morphism $L(x) \to L(x+(\epsilon + \beta))$ is zero.
Let $y \in \Pscr$ be minimal with
\begin{equation}
    \label{equation:inequality-xy}
    x \leq y.
\end{equation}
By the assumption on the mesh width, we have $y \leq x + \beta$ (where, as usual, the inequality is componentwise), and hence
\begin{equation}
    \label{equation:inquality-xy-epsilon-beta}
    y + \epsilon \leq x + (\epsilon + \beta).
\end{equation}
Let $z = \sup\{s \in \Pscr : s \leq y + \epsilon\}$.
%In particular, there must exist $z \in \Pscr$ with $y+\epsilon \leq z \leq x+\epsilon+\beta$
Then, the assumption that $L_\Pscr$ is $\epsilon$-trivial implies that
%there exists $z \in \Pscr$ with $z \leq y + \epsilon$ such that
the map $L(y) = L_\Pscr(y) \to L_\Pscr(y+\epsilon) = L(z)$ is zero.
Note that we must have
\begin{equation}
    \label{equation:inequality-yz}
    y \leq z \leq y+\epsilon,
\end{equation}
since $y \in \Pscr$.
We conclude that the composite map
\[
    L(x) \xrightarrow{\cref{equation:inequality-xy}}
    L(y) \xrightarrow{\cref{equation:inequality-yz}}
    L(z) \xrightarrow{\cref{equation:inequality-yz}}
    L(y+\epsilon) \xrightarrow{\cref{equation:inquality-xy-epsilon-beta}}
    L(x+(\epsilon + \beta))
\]
is zero, as required.
\end{proof}

\stabilityindecomposability*

We first give an outline of the proof.
    Let $M \cong M' \oplus C$, with $M'$ indecomposable and $C$ strictly $\epsilon$-trivial.
    We start by arguing that the persistence module $M$ is a $\Pscr$-extension for a sufficiently fine countable grid $\Pscr$.
    We then choose a sufficiently small $\delta > 0$ and consider an arbitrary module $N$ at interleaving distance at most $\delta$ from $M$.
    Next, we use \cref{lemma:structure-map-iso}, which says that the extension of a module over a grid can only change when one of its coordinates crosses a point in the grid; this is used to show that, after restricting to appropriate grids, a restriction of $M'$ is indecomposable and isomorphic to a direct summand of a restriction of $N$, up to a shift.
    This allows us to find a decomposition of $N$ into two summands, one of which is indecomposable and is related to a restriction of $M'$.
    Finally, we show that the other summand is strictly $\epsilon$-trivial.

\begin{proof}%[Proof of \cref{proposition:stability-indecomposability}]
%    We now proceed with the proof.
    Let $M \cong M' \oplus C$ be finitely presentable, with $M'$ indecomposable and $C$ strictly $\epsilon$-trivial;
    we show that there exists $\delta > 0$ such that every persistence module $N : \Rbf^n \to \vect$ with $d_I(M,N) < \delta$ is $\epsilon$-indecomposable.

    By \cref{lemma:fp-is-extension-finite-poset}, the module $M$ is a $\Qscr$-extension for some finite grid $\Qscr = \{r_1 < r_2 < \dots < r_k\}^n \subseteq \Rbf^n$.
    By hypothesis, we have $M \cong M' \oplus C$, with $M'$ indecomposable and $C$ strictly $\epsilon$-trivial, that is, $C$ is $\epsilon'$-trivial for some $\epsilon' < \epsilon$.

    \medskip

    \noindent\emph{Fix a grid and choose $\delta$.}
    Choose
    \begin{equation}\label{eq:beta}
        \beta < \min(\epsilon - \epsilon', \epsilon'),
    \end{equation}
    and let $\Pscr = (\{s_i\}_{i \in \Zbb})^n$ be a countable grid containing $\Qscr$ and with mesh widths in $[\alpha,\beta]$ for some $\alpha >0$.
    Now choose $\delta < \alpha/2$.

    \medskip

    Assume given $N : \Rbf^n \to \vect$ with $d_I(M,N) < \delta$.
    
    \medskip

    \noindent\emph{Decompose $N$.}
    Given a $\delta$-interleaving $f : M \to N[\delta]$ and $g : N \to M[\delta]$ between $M$ and $N$, the interleaving equalities $g[\delta] \circ f = \eta^M_{2\delta}$ and $f[2\delta] \circ g[\delta] = \eta^{N[\delta]}_{2\delta}$ induce, by \cref{equation:shift-and-restriction-extension} (at the end of \cref{section:background}), the following commutative diagram of $\Rbf^n$-persistence modules:
    \begin{equation}
        \label{equation:decomposition-N}
        \begin{tikzpicture}[baseline=(current  bounding  box.center)]
            \matrix (m) [matrix of math nodes,row sep=4em,column sep=3em,minimum width=2em,nodes={text height=1.75ex,text depth=0.25ex}]
            {      & N[\delta]_{\Pscr}  & & N[3\delta]_{\Pscr} \\
                 M_{\Pscr} &  & M[2\delta]_{\Pscr} &  \\};
            \path[line width=0.75pt, -{>[width=8pt]}]
            (m-2-1) edge node [above] {$\cong$} node [below] {$(\eta^M_{2\delta})_{\Pscr}$} (m-2-3)
            (m-2-1) edge [>->] node [above left] {$f_{\Pscr}$} (m-1-2)
            (m-1-2) edge [->>] node [above right] {$g[\delta]_{\Pscr}$} (m-2-3)
            (m-1-2) edge node [above] {$(\eta^{N[\delta]}_{2\delta})_{\Pscr}$} (m-1-4)
            (m-2-3) edge node [below right] {$f[2\delta]_\Pscr$} (m-1-4)
            ;
        \end{tikzpicture}
    \end{equation}
    Note that $M$ is a $\Pscr$-extension and thus $M \cong M_\Pscr$.
    Thus, the bottom horizontal morphism is an isomorphism by \cref{lemma:structure-map-iso} and the fact that $2\delta < \alpha$.
    This implies that the left diagonal morphism is a split monomorphism, with left inverse $(\eta_{2\delta}^M)_\Pscr^{-1} \circ g[\delta]_\Pscr$.
    This induces an isomorphism $N[\delta]_\Pscr \cong M_\Pscr \oplus X$, for some module $X$, with the property that
    \begin{equation}
        \label{equation:inclusion-X-and-projection}
        \text{the composite}\;\; X \rightarrowtail M_\Pscr \oplus X \cong N[\delta]_\Pscr \xrightarrow{g[\delta]_\Pscr} M[2\delta]_\Pscr \;\; \text{is the zero morphism.}
    \end{equation}
    %It follows that $M$ is a direct summand of $N[\delta]_{\Pscr}$.
    In particular, since $M_\Pscr \cong M \cong M' \oplus C$, we have $N[\delta]_{\Pscr} \cong M' \oplus C \oplus X$.
    
    At the same time, by \cref{theorem:decomposition}, there exists a decomposition $N \cong \bigoplus_{i \in I} N_i$ with $N_i$ indecomposable for all $i \in I$.
    Using \cref{lemma:decomposition-restriction-extension}, we see that, by restricting to $\Pscr$ and extending, we get a decomposition $N[\delta]_{\Pscr} \cong \bigoplus_{i \in I} (N_i[\delta])_{\Pscr}$, where now the summands $\{(N_i[\delta])_{\Pscr}\}_{i \in I}$ may not be indecomposable anymore.
    Nevertheless, since $N[\delta]_{\Pscr} \cong M' \oplus C \oplus X$ and $M'$ is indecomposable by assumption, by \cref{theorem:decomposition} there exists $i \in I$ such that $(N_i[\delta])_{\Pscr}\cong M' \oplus X'$ for some $X'$.
    We 
%    consider the decomposition
%    \[
%        N_{\Pscr + \delta}\;\; \cong \;\;(N_i)_{\Pscr + \delta} \oplus \bigoplus_{j \in I \setminus i} (N_j)_{\Pscr + \delta} \;\;\cong \;\;(M_\Pscr[-\delta] \oplus X') \oplus \bigoplus_{j \in I \setminus i} (N_j)_{\Pscr + \delta},
%    \]
%    and 
    let $A = N_i$ and $B = \bigoplus_{j \in I \setminus i} N_j$.
    Note also that $B[\delta]_\Pscr$ is a direct summand of $C \oplus X$.
    %Since this will be of use later, we note here that, by construction, $B[\delta]_\Pscr$ is isomorphic to a summand of $X$.
    %\todo{the change for the general case is that $B[\delta]_\Pscr$ is a summand of $C_\Pscr \oplus X$}
    
    We have thus decomposed $N \cong A \oplus B$ with $A$ indecomposable.

    \medskip

    \noindent\emph{The module $B$ is strictly $\epsilon$-trivial.}
    To conclude, it remains to show that $B$ is strictly $\epsilon$-trivial, or, equivalently, that $B[\delta]$ is strictly $\epsilon$-trivial.
    %, and for this it is enough to show that $d_I(B,0) < \epsilon/2$.
    %By definition of $\Pscr$, we have $d_I(B,B_\Pscr) < \beta = \epsilon/4$,
    \cref{lemma:snapping-trivial} reduces the problem to showing that $B[\delta]_\Pscr$ is strictly $(\epsilon-\beta)$-trivial.
    Since $B[\delta]_\Pscr$ is a direct summand of $C \oplus X$,
    it is enough to show that $C$ and $X$ are $(\epsilon-\beta)$-trivial.
    Note that, by \cref{eq:beta}, we have $\epsilon - \beta > \epsilon - \min(\epsilon - \epsilon', \epsilon') = \max(\epsilon', \epsilon - \epsilon')$, and hence $\epsilon',\beta < \epsilon - \beta$.
    The module $C$ is $\epsilon'$-trivial, by assumption, so the rest of this proof is devoted to showing that $X$ is $\beta$-trivial.

    We show that $\eta^X_{\beta} : X \to X[\beta]$ is the zero morphism.
    By the naturality of shifting, that is, by the commutativity of the following square
    \[
    \begin{tikzpicture}
        \matrix (m) [matrix of math nodes,row sep=4em,column sep=7em,minimum width=2em,nodes={text height=1.75ex,text depth=0.25ex}]
        {  M_\Pscr \oplus X  &  M_\Pscr[\beta] \oplus X[\beta] \\
           N[\delta]_{\Pscr}  & (N[\delta]_{\Pscr})[\beta],\\};
        \path[line width=0.75pt, -{>[width=8pt]}]
        (m-1-1) edge node [above] {$\left(\eta^{M_\Pscr}_\beta, \eta^X_\beta\right)$} (m-1-2)
        (m-1-1) edge node [left] {$\cong$} (m-2-1)
        (m-1-2) edge node [left] {$\cong$} (m-2-2)
        (m-2-1) edge node [above] {$\eta^{N[\delta]_{\Pscr}}_\beta$} (m-2-2)
        ;
    \end{tikzpicture}
    \]
    it is sufficient to show that the following composite is the zero morphism:
    \begin{equation}
        \label{equation:sufficient-zero-morphism}
        X \rightarrowtail M_\Pscr \oplus X \xrightarrow{\cong} N[\delta]_{\Pscr} \xrightarrow{\eta^{N[\delta]_{\Pscr}}_\beta} (N[\delta]_{\Pscr})[\beta].
    \end{equation}
    By \cref{equation:inclusion-X-and-projection}
    %by the construction of the decomposition $N[\delta]_{\Pscr} \cong M_\Pscr \oplus X$, we have that the composite $X \to M_\Pscr \oplus X \cong N[\delta]_\Pscr \to M_\Pscr$
    and the commutativity of the right interleaving triangle in \cref{equation:decomposition-N}, the following composite is the zero morphism
    \begin{equation}
        \label{equation:zero-morphism}
        X \rightarrowtail M_\Pscr \oplus X \xrightarrow{\cong} N[\delta]_{\Pscr}
        \xrightarrow{(\eta^{N[\delta]}_{2\delta})_{\Pscr}} N[3\delta]_{\Pscr}.
    \end{equation}
    Thus, it suffices to see that the right-most morphism in \cref{equation:sufficient-zero-morphism} factors through the right-most morphism in \cref{equation:zero-morphism}.
    Letting $L = N[\delta]$ 
%    and using \cref{equation:shift-and-restriction-extension}, 
	and $r = 2\delta < \beta$, 
    this translates to the claim that the morphism 
    $\eta^{L_{\Pscr}}_\beta : L_{\Pscr} \to L_{\Pscr}[\beta]$ factors through $(\eta^{L}_{r})_{\Pscr} : L_{\Pscr} \to L[r]_{\Pscr}$, which follows from \cref{lemma:factor-bounded-grid}.
%
%    This follows from \cref{lemma:factor-bounded-grid}, by setting $\beta = \alpha$, $\Sscr = \Pscr$, $L = N[\delta]$, and $r = 2\delta$, and noting that, with these definitions, we have $L_\Sscr = N[\delta]_\Pscr = N_{\Pscr + \delta}[\delta]$,
%    $L_{\Sscr}[\alpha] = (N[\delta]_\Pscr)[\alpha] = N_{\Pscr + \delta}[\delta + \alpha]$,
%    and $L_{\Sscr + r}[r] = (N[\delta]_{\Pscr + 2\delta})[2\delta] = N_{\Pscr + 3\delta}[3\delta]$.
\end{proof}

\section{Consequences}
\label{section:consequences}

In this section we prove \cref{theorem:main-theorem,corollary:structural-stability,theorem:dense-F-decomposables,theorem:dense-F-complicated}.

\maintheorem*
\begin{proof}
    Let $\epsilon > 0$.
    %for every $\epsilon$-indecomposable finitely presentable $M : \Rbf^n \to \vect$ there exists $\delta > 0$ such that every finitely presentable $N : \Rbf^n \to \vect$ 
    %with $d_I(M,N) < \delta$
    %%in the open $\delta$-ball in the interleaving distance centered at $M$ 
    %decomposes as $N \cong A \oplus B$ with $A$ indecomposable and $B$ strictly $\epsilon$-trivial, by \cref{proposition:stability-indecomposability}.
    The set of $\epsilon$-indecomposable $n$-parameter persistence modules is open, by \cref{proposition:stability-indecomposability}.
    
    This set also contains all indecomposable, finitely presentable modules, by definition.
    Since the set of indecomposables is dense by \cref{theorem:indecomposables-dense}, the result follows.
\end{proof}

Before proving \cref{corollary:structural-stability}, let us recall the notions of matching, bottleneck distance, and structural stability given in the introduction

Let $\epsilon > 0 \in \Rbb$, and let $M, N : \Rbf^n \to \vect$.
An \emph{$\epsilon$-matching} between $M$ and $N$ consists of two decompositions $M \cong \bigoplus_{j \in J} M_j$ and $N \cong \bigoplus_{j \in J} N_j$, indexed by the same set $J$, where each summand is either indecomposable or zero,
and such that $M_j$ and $N_j$ are $\epsilon$-interleaved for every $j \in J$.
The \emph{bottleneck distance} between $M$ and $N$ is
\[
    d_B(M,N) = \inf \{\epsilon \geq 0 : \text{there exists an $\epsilon$-matching between $M$ and $N$}\}.
\]
A finitely presentable module $M : \Rbf^n \to \vect$ is \emph{structurally stable} if, for every $\epsilon > 0$, there exists $\delta > 0$ such that every finitely presentable module $N : \Rbf^n \to \vect$ with $d_I(M,N) < \delta$ satisfies $d_B(M,N) < \epsilon$.

\structuralstability*
\begin{proof}
    Assume that $M$ is finitely presentable and structurally stable.
    Let $M$ decompose into indecomposables as $M \cong \bigoplus_{i \in I} M_i$.
    Since $M$ is finitely presentable, $I$ is finite, by \cref{lemma:decomposition-fp}, so let $\epsilon > 0$ be such that $d_I(M_i,0) > \epsilon$ for all $i \in I$.
    Since $M$ is structurally stable, there exists $\delta > 0$ such that, for every finitely presentable $N : \Rbf^n \to \vect$ with $d_I(M,N) < \delta$, there exists an $\epsilon$-matching between $M$ and $N$.
    By \cref{theorem:indecomposables-dense}, there exists an indecomposable, finitely presentable $N$ with $d_I(M,N) < \delta$.
    Since all of the indecomposable summands of $M$ are at distance more than $\epsilon$ from the zero module,
    in an $\epsilon$-matching between $M$ and $N$ given by decompositions $M \cong \bigoplus_{j \in J} M_j$ and $N \cong \bigoplus_{j \in J} N_j$, we must have that $M_j \neq 0$ implies $N_j \neq 0$.
    Since $N$ is indecomposable, there is at most one $j \in J$ such that $M_j$ is non-zero, and thus $M$ is indecomposable or zero.
    
    \medskip

    Assume now that $M$ is finitely presentable, and let $\epsilon > 0$.
    We prove that, if $M$ is zero or indecomposable, then there exists $\delta > 0$ such that, for every finitely presentable $N : \Rbf^n \to \vect$ with $d_I(M,N) < \delta$, we have $d_B(M,N) < \epsilon$.

    If $M = 0$, then we can set $\delta = \epsilon$, since $d_I(0,N) = d_I(M,N) < \epsilon$ implies that $d_I(0,N') < \epsilon$ for every indecomposable summand $N'$ of $N$, so $M \cong \bigoplus_{j \in J} M_j$ and $N \cong \bigoplus_{j \in J} N_j$, with $M_j = 0$ for all $j \in J$ and $N \cong \bigoplus_{j \in J} N_j$ a decomposition into indecomposables gives an $\epsilon$-matching between $M$ and $N$.

    Assume now that $M$ is indecomposable.
    By \cref{proposition:stability-indecomposability}, there exists $\delta > 0$ such that every persistence module $N : \Rbf^n \to \vect$ with $d_I(M,N) < \delta$ decomposes as $N \cong A \oplus B$ with $A$ indecomposable and $B$ strictly $\epsilon$-trivial.
    Let $B \cong \bigoplus_{1 \leq j \leq k} B_j$ be a decomposition into indecomposables.
    Then, the matching between $M$ and $N$ given by the decompositions
    $M \cong \bigoplus_{0 \leq j \leq k} M_j$ with $M_0 = M$ and $M_j = 0$ for every $1 \leq j \leq k$, and $N \cong \bigoplus_{0 \leq j \leq k} N_j$ with $N_0 = A$ and $N_j = B_j$ for every $1 \leq j \leq k$ is an $(\frac\epsilon2)$-matching.
    Thus, if $d_I(M,N) < \delta$, then $d_B(M,N) < \frac\epsilon2 \leq \epsilon$, as required.
\end{proof}

We use the following result in the proof of \cref{theorem:dense-F-decomposables}.

\begin{lem}
    \label{lemma:indecomposable-approximated-by-indecomposable}
    Let $\Fcal$ be a collection of isomorphism classes of indecomposable, finitely presentable $n$-parameter persistence modules.
    If an indecomposable, finitely presentable $n$-parameter persistence module is in the topological closure of the $\Fcal$-decomposable modules, then the module is in the closure of $\Fcal$ itself.
\end{lem}
\begin{proof}
    Let $M : \Rbf^n \to \vect$ be indecomposable, finitely presentable, and in the closure of the $\Fcal$-decomposable modules.
    Given $\epsilon > 0$, we show that there exists $N \in \Fcal$ such that $d_I(M,N) < \epsilon$.

    By \cref{proposition:stability-indecomposability}, there exists $\delta > 0$ such that every module at distance less than $\delta$ from $M$ is $\epsilon$-indecomposable.
    Let $\delta' = \min(\delta,\frac\epsilon2)$.
    Since $M$ is in the closure of the $\Fcal$-decomposable modules,
    there exists an $\Fcal$-decomposable module $X$ such that $d_I(M,X) < \delta'$.
    It follows that $X \cong N \oplus B$ with $N$ indecomposable and $B$ strictly $\epsilon$-trivial.
    In particular $N \in \Fcal$ and $d_I(N,X) = d_I(N,N \oplus B) < \frac\epsilon2$.
    To conclude, note that $d_I(M,N) \leq d_I(M,X) + d_I(X,N) < \epsilon$.
\end{proof}

\denseFdecomposables*
\begin{proof}
    Let $\Fcal$ be as in the statement, and let $\Gcal$ denote the set of finitely presentable, $\Fcal$-decomposable modules.
    Since $\Fcal \subseteq \Gcal$, we have $(\overline{\Fcal})^\circ \subseteq (\overline{\Gcal})^\circ$.
    To prove the other inclusion, it is enough to prove that $(\overline{\Gcal})^\circ \subseteq \overline{\Fcal}$.
    So let $M \in (\overline{\Gcal})^\circ$ and $\epsilon > 0$.
    We prove that there exists $N \in \Fcal$ with $d_I(M,N) < \epsilon$.

    Let $0 < \epsilon' \leq \epsilon$ be such that the open $\epsilon'$-ball around $M$ is included in $(\overline{\Gcal})^\circ$.
    By \cref{theorem:indecomposables-dense}, there exists an indecomposable, finitely presentable $M' : \Rbf^n \to \vect$ such that $d_I(M,M') < \epsilon'/2$.
    In particular, $M' \in (\overline{\Gcal})^\circ \subseteq \overline{\Gcal}$, so, by \cref{lemma:indecomposable-approximated-by-indecomposable}, there exists $N \in \Fcal$ such that $d_I(M',N) < \epsilon'/2$.
    It follows that $d_I(M,N) < \epsilon' \leq \epsilon$, as required.
\end{proof}

The following definition will be used in the proofs of
\cref{theorem:dense-F-complicated,lemma:not-baire}.
Given $a < b \in \Rbf^n$, let $\kbb_{[a,b)} : \Rbf^n \to \vect$ be the persistence module that takes the value $\kbb$ in the set $[a,b) = \{j \in \Rbf^n : a \leq j \text{ and } j \not\geq b\}$ and the value $0$ otherwise, with all structure morphisms that are not constrained to be zero being the identity.

\denseFcomplicated*
\begin{proof}
    We prove that, if the topological closure of the $\Fcal$-decomposable modules has non-empty interior, then for every $i \in \Rbf^n$ and $k \in \Nbb$, there exists $N \in \Fcal$ such that $\dim(N(i)) \geq k$.

    Let $M : \Rbf^n \to \vect$ be a finitely presentable module in the interior of the closure of the $\Fcal$-decomposable modules.
    By definition of the topology induced by a metric, there exists $\epsilon > 0$ such that the open $\epsilon$-ball around $M$ is contained in the closure of the $\Fcal$-decomposable modules.
    Let
    \[
        M' \coloneqq M \oplus \kbb_{[i-\epsilon,i+\epsilon)}^k
    \]
    be the direct sum of $M$ and $k$ copies of $\kbb_{[i-\epsilon,i+\epsilon)}$.
    Since $\kbb_{[i-\epsilon,i+\epsilon)}$ is $2\epsilon$-trivial, the modules $M$ and $M'$ are $\epsilon$-interleaved, so $M'$ is in the interior of the closure of the $\Fcal$-decomposable modules.
    Note that, by construction, $\rk(M'(i-\frac\epsilon2) \to M(i+\frac\epsilon2)) \geq k$.

    Using \cref{theorem:dense-F-decomposables}, let $N \in \Fcal$ be such that $d_I(M',N) < \frac\epsilon4$.
    We have $\rk(N(i-\frac{3\epsilon}4) \to N(i+\frac{3\epsilon}4)) \geq \rk(M'(i-\frac\epsilon2) \to M'(i+\frac\epsilon2)) = k$, since 
    the morphism $N(i-\frac{3\epsilon}4) \to N(i+\frac{3\epsilon}4)$ factors through $M'(i-\frac\epsilon2) \to M'(i+\frac\epsilon2)$, using any $\frac\epsilon4$-interleaving between $N$ and $M'$.
    In particular, we must have $\dim(N(i)) \geq k$, as required.
\end{proof}

\appendix

\section{Proofs of standard results}
\label{appendix}

   In this section, we provide proofs for standard results used in the main part of the paper.

\begin{proposition*}
\statementmultiparameterwild
\end{proposition*}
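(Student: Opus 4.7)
The plan is to reduce to a classical strict-wildness result in the representation theory of finite posets, then apply a standard embedding of $\Lambda$-modules into representations of a free associative algebra.

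First, I would reduce to $n = 2$ and $\Pscr = \{1 < 2 < 3 < 4\}$. Any choice of four elements $q_1 < q_2 < q_3 < q_4$ in $\Pscr$, together with a fixed $q_0 \leq q_1$ in $\Pscr$, yields an order embedding $(i,j) \mapsto (q_i, q_j, q_0, \dots, q_0)$ of $\{1,2,3,4\}^2$ into $\Pscr^n$. Left Kan extension along this embedding---a direct generalization of the ``extension'' construction of \cref{section:background}---sends finitely presentable modules to finitely presentable modules and is fully faithful on persistence module categories, hence preserves isomorphism classes and, by \cref{lemma:indecomposable-local-ring}, indecomposability. It therefore suffices to produce an injection from isomorphism classes of indecomposable finite dimensional $\Lambda$-modules into isomorphism classes of indecomposable representations of the $16$-element poset $\{1,2,3,4\}^2$.

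Next, I would invoke the classical fact that $\{1,2,3,4\}^2$ is of \emph{strict} wild representation type: for every finitely generated associative $\kbb$-algebra $R$, there is an explicit functor $\Phi$ from finite dimensional $R$-modules to finitely presentable $\vect$-valued representations of $\{1,2,3,4\}^2$ that is faithful on morphisms and sends non-isomorphic indecomposables to non-isomorphic indecomposables. Concretely, $\Phi$ is built by placing copies of the underlying vector space at chosen positions of the $4 \times 4$ grid and encoding each generator of $R$ as a structure morphism between suitably placed (typically incomparable) positions, with auxiliary summands at the top and bottom corners forcing every endomorphism of the resulting grid module to commute with all the generators. Given $\Lambda$ with a chosen set of generators $g_1, \dots, g_m$, the quotient map $\kbb\langle X_1, \dots, X_m\rangle \twoheadrightarrow \Lambda$ makes restriction of scalars a fully faithful embedding $\Lambda\text{-mod} \hookrightarrow \kbb\langle X_1, \dots, X_m\rangle\text{-mod}$ that preserves and reflects isomorphism and indecomposability; composing it with $\Phi$ for $R = \kbb\langle X_1, \dots, X_m\rangle$ and then with the first-step inflation yields the desired injective function.

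The main obstacle is producing $\Phi$ explicitly and verifying that it preserves indecomposability. I would build $\Phi$ by adapting a standard construction from the literature on representations of posets (for instance, a known wild realization based on subposets of type $(1,1,1,1,1)$ or $(2,2,2)$, all of which embed into $\{1,2,3,4\}^2$ with room for the required corner spacers), and verify indecomposability via \cref{lemma:indecomposable-local-ring} through a direct identification $\End(\Phi(V)) \cong \End_R(V)$. The bookkeeping for this identification is the only real calculation in the argument.
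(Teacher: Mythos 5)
Your overall architecture matches the paper's: first embed a small wild poset fully into $\Pscr^n$, observe that left Kan extension along a full embedding induces an injection on isomorphism classes of finitely presentable indecomposables, and then invoke a strict-wildness result for the small poset composed with restriction of scalars from $\Lambda$ to a finitely generated free algebra. The first and third steps of your sketch are essentially the paper's.

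The substantive difference, and where a gap opens up, is in the choice of small poset and the wildness result you invoke. The paper deliberately picks a \emph{tree-shaped} full subposet $\Qscr \subseteq \{0,1,2,3\}\times\{0,1,2\}$ (a central vertex $(1,1)$ with arms to $(0,1)$, $(1,0)$, $(1,2)$ and $(2,1)\to(3,1)$). Because $\Qscr$ is a tree, the category of functors $\Qscr \to \vect$ is literally the category of representations of a hereditary wild quiver, with no commutativity relations, and one can cite a clean quiver-theoretic strict-wildness construction (Simson, Nazarova). You instead propose to work with the entire $\{1,2,3,4\}^2$ grid, whose functor category has commutativity relations, and then to justify its strict wildness by appealing to Nazarova--Roiter/Kleiner-type posets such as $(1,1,1,1,1)$ and $(2,2,2)$. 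This runs into two concrete problems. First, neither $(1,1,1,1,1)$ nor $(2,2,2)$ embeds as a full subposet of $\{1,2,3,4\}^2$: the maximal antichain of $\{1,2,3,4\}^2$ has only $4$ elements, which rules out $(1,1,1,1,1)$ outright, and a short case check shows that $\{1,2,3,4\}^2$ also does not contain three pairwise incomparable $2$-chains, ruling out $(2,2,2)$. Second, and more importantly, the Kleiner/Nazarova--Roiter wildness classification is stated for \emph{subspace} representations of a poset (a fixed ambient space with a poset of subspaces), which is a genuinely different category from functors $\Pscr \to \vect$; a translation between the two would require an additional argument (and, at minimum, a careful restriction to functors with injective structure maps). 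The paper's choice of a tree-shaped $\Qscr$ sidesteps both issues simultaneously. If you keep your reduction step but replace the $4\times 4$ grid with a tree-shaped wild full subposet of it and cite a wild-quiver construction instead of a wild-poset-of-subspaces construction, your argument goes through and coincides with the paper's.
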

\begin{proof}
    We start with an observation: if $\Tscr$ is a poset, and $\Sscr \subseteq \Tscr$
    is a full subposet, we get, by left Kan extension, an additive, fully faithful functor $\vect^\Sscr \to \vect^\Tscr$, which, by uniqueness of decompositions, induces an injective function mapping isomorphism classes of indecomposable, finitely presentable $\Sscr$-persistence modules to isomorphism classes of indecomposable, finitely presentable $\Tscr$-persistence modules.
    Consider now the subposet $\Qscr$ of $\{0,1,2,3\} \times \{0,1,2\}$, represented as by its Hasse diagram as follows:
    \[\footnotesize
        \begin{tikzpicture}
            \matrix (m) [matrix of math nodes,row sep=3em,column sep=3em,minimum width=2em,nodes={text height=1.75ex,text depth=0.25ex}]
            {   & (1,2) &    &    \\
               (0,1) & (1,1) & (2,1) & (3,1) \\
                   & (1,0)      &       &    \\};
            \path[line width=0.5pt, -{>[width=6pt]}]
            (m-2-1) edge (m-2-2)
            (m-2-2) edge (m-2-3)
            (m-2-3) edge (m-2-4)
            (m-3-2) edge (m-2-2)
            (m-2-2) edge (m-1-2)
            ;
        \end{tikzpicture}
    \]
    If $\Pscr$ is a totally ordered set with at least $4$ elements, and $n \geq 2$, then $\Qscr$ embeds in $\Pscr^n$.
    It follows that there exists an explicit injective function mapping isomorphism classes of indecomposable, finitely presentable $\Qscr$-persistence modules to isomorphism classes of indecomposable, finitely presentable $\Pscr^n$-persistence modules.
    The function is explicit in the sense that, if one is given a $\Qscr$-persistence module described by means of a basis for each vector space and a matrix for each structure morphism, one can explicitly write down bases and matrices for the corresponding $\Pscr^n$-persistence module.

    Now, the well known fact that $\Qscr$, which is in fact a quiver, is of wild representation type finishes the proof.
    In more detail: by \cite[Chapter~14.2,~Corollary~14.11]{simson} and \cite[Lemma~4~(3)]{nazarova-2}, there exists a functor $\mod(\Lambda) \to \vect^\Qscr$ inducing an injective function mapping isomorphism classes of indecomposable, finite dimensional $\Lambda$-modules to isomorphism classes of indecomposable, finitely presentable $\Qscr$-persistence modules.
    This function is also explicit: see the construction in the proof of \cite[Chapter~14.2,~Proposition~14.10]{simson}, which gives an explicit formula that only requires a set of generators of $\Lambda$ as a $\kbb$-vector space.

    The result then follows by composing the two functions $\mod(\Lambda) \to \vect^\Qscr \to \vect^{\Pscr^n}$.
\end{proof}

   We now prove a lemma relating local rings to the definitions used in~\cite{azumaya}, which we now recall.
   The result is mentioned in \cite{azumaya}, but we include a proof here for completeness.

   A nonzero idempotent element $x \in R$ of a ring $R$ is \define{primitive} if every time we have $x = i + e$ with $i$ and~$e$ orthogonal idempotents, then $i = 0$ or $e = 0$.
   An element $c \in R$ is a \define{root element} if $cR$ contains no non-zero idempotent elements.
   If the set $C$ of all root elements of $R$ is closed under addition, then we say that $R$ \define{possesses the radical $C$}.
   The ring $R$ is \define{completely primary} if it possesses a radical, and every non-zero idempotent element of $R$ is primitive.
   Finally, a ring is \define{local} if the sum of any two non-invertible elements is non-invertible.

\begin{lem}
    \label{lemma:local-implies-completely-primary}
    Let $R$ be a unital ring with $1 \neq 0 \in R$.
    Then $R$ is local if and only it is completely primary.
\end{lem}
\begin{proof}
     We use the following well known facts about local rings.
     For a unital ring the following are equivalent: the ring is local; the ring admits a unique maximal left ideal; the ring admits a unique maximal right ideal.
     The unique maximal ideal of a local ring consists of all non-invertible elements.
     In a local ring the only two idempotent elements are $0$ and~$1$.
     For an element of a local ring the following are equivalent: the element is non-invertible; the element does not have a left inverse; the element does not have a right inverse.

    Let $R$ be local.
    We start by showing that $R$ possesses a radical $C$.
    We claim that $C$ is equal to the unique maximal ideal of non-invertible elements.
    To see this, let $c \in R$.
    We have that $cR$ contains no non-zero idempotents if and only if $1 \notin cR$, which happens if and only if $c$ is non-invertible.

    To conclude, we need to show that $1$ is primitive, which follows directly from the fact that the only two idempotents are $0$ and $1$.

    \medskip
    
    Now let $R$ be completely primary. We claim that the radical $C$ is the set of not right-invertible elements,
    that this set forms a right ideal, and that every proper right ideal of $R$ consists of not right-invertible elements.
    
    Let $c$ be a root element. Then $cR$ does not contain the idempotent $1$, so $c$ has no right inverse.
    Conversely, assume that $c$ has no right inverse.
    Assume that $cd \neq 1$ is an idempotent.
    Then $1-cd \neq 0$ is another idempotent, orthogonal to $cd$.
    Since $1 = cd + (1-cd)$ is primitive, we must have $cd=0$.
    Thus $c$ is a root element.
    
    Now if $c$ is not right-invertible, then $cd$ is not right-invertible, as $cde=1$ would show that $de$ is a right inverse to $c$. Thus $CR = C$. Furthermore, $C$ is closed under addition, and thus it is a right ideal.
    
    Finally, let $I$ be a proper right ideal of $R$. 
    Then $1 \notin I$.
    Let $c \in I$ and $d \in R$. Then $cd \in I$ and hence $cd \neq 1$.
    Thus $c$ has no right-inverse.    
\end{proof}

\theoremdecomposition*
\begin{proof}
    The existence of the decomposition as a direct sum of indecomposable follows directly from \cite[Theorem~1.1]{botnan-crawleybovey}.
    For the uniqueness of the decomposition, we use \cite[Theorem~1,~(ii)]{azumaya}, and, to satisfy the hypothesis of this theorem, we use the fact that indecomposable persistence modules have local endomorphism ring by \cite[Theorem~1.1]{botnan-crawleybovey}, and that local rings are completely primary (\cref{lemma:local-implies-completely-primary}) in the sense of \cite{azumaya}.
    Note that \cite{azumaya} is working with modules over an arbitrary ring; the results apply nevertheless, since the category of $\Pscr$-persistence modules admits an additive, fully faithful embedding into a category of modules (see, e.g., \cite[Lemma~2.1]{botnan-oppermann-oudot-scoccola}).
\end{proof}

%\indecomposablelocalring*
%\begin{proof}
%    This is a direct consequence of \cite[Theorem~1.1]{botnan-crawleybovey}.
%\end{proof}

%\Gindecomposable*
%
%\begin{proof}
%	We show that for $a \in \{(0,3), (3,0), (4,4)\}$, the morphism of rings $\End(\Gsf) \to \End(\Gsf(a))= \kbb$ given by evaluation at $a$ is an isomorphism.
%	From \cref{lemma:indecomposable-local-ring}, it then follows that $\Gsf$ is indecomposable.
%
%    Let $f : \Gsf \to \Gsf$ be an endomorphism.
%    If $f(4,4) : \kbb \to \kbb$ is given by multiplication by $\alpha$, then, for all $0 \leq j \leq 4$, we have that $f(4,j) : \kbb \to \kbb$ and $f(j,4) : \kbb \to \kbb$ must also be given by multiplication by $\alpha$.
%
%    Assume that $f(1,2) : \kbb \to \kbb$ is given by multiplication by $\beta$ and that $f(2,1) : \kbb \to \kbb$ is given by multiplication by $\gamma$.
%    Since the composite of structure morphisms $\kbb = \Gsf(1,3) \to \Gsf(1,4) \to \Gsf(1,5) = \kbb$ is the identity, we must have $\beta = \alpha$.
%    By an analogous argument, we have $\gamma = \alpha$.
%    This shows that $f(j,k)$ must be given by multiplication by $\alpha$ for $1 \leq j,k \leq 3$.
%    
%    To conclude, note that, by the definition of the structure morphisms $\Gsf(0,3) \to \Gsf(1,3)$ and $\Gsf(3,0) \to \Gsf(3,1)$, we have that $f(0,3)$ and $f(3,0)$ are also given by multiplication by $\alpha$.
%    This concludes the proof.
%\end{proof}

\decompositionfp*
\begin{proof}
    By \cref{lemma:fp-is-extension-finite-poset}, there exists a finite grid $\Pscr \subseteq \Rbf^n$ such that $M$ is isomorphic to the extension of some $N : \Pscr \to \vect$.
    By \cref{theorem:decomposition}, there exists a family $\{N_i\}_{i \in I}$ with $N_i : \Pscr \to \vect$ such that $N \cong \bigoplus_{i \in I} N_i$ and $N_i \neq 0$.
    The set $I$ must be finite, since the poset~$\Pscr$ is finite and each module $N_i$ takes a non-zero value in at least one element of the poset~$\Pscr$.
    Thus, we have that $M \cong \bigoplus_{i \in I} \widehat{N_i}$, with $I$ finite.
    To conclude, use that \cref{lemma:fp-is-extension-finite-poset}, which implies that $\widehat{N_i}$ is finitely presentable for all $i \in I$.
\end{proof}

Finally, the following lemma does not seem to appear in the literature but is straightforward to prove using an interleaving argument. Similar results are known for the one-parameter case; see, e.g., \cite{bubenik-spaces-persistence}.

\begin{lem}
    \label{lemma:not-baire}
    Let $n \geq 1$.
    The space of finitely presentable, $n$-parameter persistence modules metrized with the interleaving distance is not Baire.
\end{lem}
\begin{proof}
    Given a finitely presentable module $M : \Rbf^n \to \vect$, let $c(M) = \max_{i \in \Rbf^n} \dim(M(i)) \in \Nbb$.
    We claim that, for every $C \geq 0$, the set $S_C \coloneqq \{M : c(M) \geq C \}$ is open and dense; the result follows from this claim since $\bigcap_{C \in \Nbb} S_C = \emptyset$.

    To prove that $S_C$ is dense, note that, given any finitely presentable module $M$ and $\epsilon > 0$, we can consider $M \oplus B$ with
    $B \coloneqq \kbb_{[0,\epsilon)}^C$.
    We have $c(M\oplus B) \geq c(B) = C$ and $d_I(M,M\oplus B) \leq \frac\epsilon2$, so $S_C$ is dense.

    To see that $S_C$ is open, let $M \in S_C$ and let $i \in \Rbf^n$ such that $M(i) \geq C$.
    Since $M$ is finitely presentable, there must exist $\epsilon > 0$ such that $\rk(M(i) \to M(i+2\epsilon)) \geq C$, by \cref{lemma:structure-map-iso}.
    For convenience, let $j = i+\epsilon$.
    Let $N : \Rbf^n \to \vect$ be such that $d_I(M,N) < \frac\epsilon4$; we prove that $N \in S_C$.
    We have $\rk(N(j-\frac{3\epsilon}4) \to N(j+\frac{3\epsilon}4)) \geq \rk(M(j-\frac\epsilon2) \to M(j+\frac\epsilon2)) = C$, since the morphism $N(j-\frac{3\epsilon}4) \to N(j+\frac{3\epsilon}4)$ factors through $M(j-\frac\epsilon2) \to M(j+\frac\epsilon2)$, using any $\frac\epsilon4$-interleaving between $N$ and $M$.
    In particular, we must have $\dim(N(j)) \geq C$, and thus $N \in S_C$, as required.

%    Given a finitely presentable module $M : \Rbf^n \to \vect$ with minimal projective resolution $P_\bullet \to M$, we define a function $\mu_M : \Rbf^n \to \Zbb$ as follows.
%    For each $\ell \in \Nbb$, let $\mu^M_\ell : \Rbf^n \to \Zbb$ be the unique function such that
%    \[
%        P_\ell \cong \bigoplus_{i \in \Rbf^n} \Psf_i^{\mu_M^\ell(i)}.
%    \]
%    Then, let $\mu_M \coloneqq \sum_{\ell \in \Nbb} (-1)^\ell \mu_M^\ell$.
%    Finally, $c(M) \coloneqq \sum_{i \in \Rbf^n} |\mu_M(i)| \in \Nbb$.
%    We claim that, for every $C \geq 0$, the set $S_C \coloneqq \{M : c(M) \geq C \}$ is open and dense; the result follows from this claim since $\bigcap_{C \in \Nbb} S_C = \emptyset$.
%
%    To see that $S_C$ is open, let $M \in S_C$, and let $d > 0$ be the smallest $\|-\|_\infty$-distance between any pair of distinct elements of $R_M$.
%    Now, let $N$ be finitely presentable with $d_I(M,N) < d/(n^2-1)$ if $n \geq 2$, and $d_I(M,N) < d/2$ if $n = 1$.
%    Note that, in the language of \cite{oudot-scoccola}, the set $R_M$ consists of the elements of $\Rbf^n$ that appear with different multiplicities in $\beta_{2\Nbb}(M)$ and $\beta_{2\Nbb+1}(M)$.
%    By \cite[Theorem~1]{oudot-scoccola}, there must exist a bijection between $R_M$ and a subset of $\beta_{2\Nbb}(N) \cup \beta_{2\Nbb}(N)$
\end{proof}

%\bibliographystyle{alpha}
%\bibliography{biblio}
\printbibliography

\end{document}